\newcounter{intro}
\newtheorem{thm}{Theorem}[section]
\newtheorem{prop}[thm]{Proposition}
\newtheorem{cor}[thm]{Corollary}
\newtheorem{lem}[thm]{Lemma}
\newtheorem{conj}[intro]{Conjecture}
\newtheorem{mainthm}[intro]{Main Theorem}
\theoremstyle{definition}
\newtheorem{defn}[thm]{Definition}
\newtheorem{rmk}[thm]{Remark}
\newtheorem{exmp}[thm]{Example}
\newcommand{\ww}{\mathfrak{w}}
\newcommand{\Gr}{\mathrm{Gr}}
\newcommand{\SL}{\mathrm{SL}}
\newcommand{\cA}{\mathcal{A}}
\newcommand{\cX}{\mathcal{X}}
\newcommand{\cM}{\mathcal{M}}
\newcommand{\cW}{\mathcal{W}}
\newcommand{\cO}{\mathcal{O}}
\newcommand{\sgn}{\mathrm{sign}}
\newcommand{\inprod}[2]{\left\langle #1, #2\right\rangle}
\newcommand{\doverline}[1]{\overline{\overline{#1}}}
\newcommand{\abs}{\mathrm{abs}}
\newcommand{\cB}{\mathcal{B}}
\newcommand{\fr}{\mathrm{fr}}
\newcommand{\Hom}{\mathrm{Hom}}
\newcommand{\Sk}{\mathrm{Sk}}
\let\c@equation\c@thm
\numberwithin{equation}{section}
\colorlet{lightblue}{blue!30!white}
\colorlet{lightred}{red!30!white}
\colorlet{lightteal}{teal!30!white}
\title{Weighted Cycles on Weaves}
\author{Daping Weng}
\begin{document}

\begin{abstract} We introduce weighted cycles on weaves of general Dynkin types and define a skew-symmetrizable intersection pairing between weighted cycles. We prove that weighted cycles on a weave form a Laurent polynomial algebra and construct a quantization for this algebra using the skew-symmetric intersection pairing in the simply-laced case. We define merodromies along weighted cycles as functions on the decorated flag moduli space of the weave. We relate weighted cycles with cluster variables in a cluster algebra and prove that mutations of weighted cycles are compatible with mutations of cluster variables.  
\end{abstract}

\maketitle

\setcounter{tocdepth}{2}
\tableofcontents

\section{Introduction}

Weaves were introduced by Casals and Zaslow \cite{CZ} as a graphical tool to describe a family of Legendrian surfaces living inside the $1$-jet space of a base surface. Soon after their introduction, weaves were adapted for the study cluster algebras, with many pieces of literature devoted to the development of weave-based descriptions of cluster structures, including \cite{Hughes,ABL,CGGS,CLSBW}. In particular, weaves were generalized to all Dynkin types in \cite{CGGLSS} and played an important role in the construction of cluster structures on braid varieties. 

Under the generalization in \textit{loc. cit.}, the original version of weaves by Casals and Zaslow, which can also be referred to as \emph{Legendrian weaves}, would be categorized as Dynkin type A. In \cite{CW}, Casals and the author gave a topological interpretation of cluster structures described by weaves of Dynkin type A, which can be summarized by the following table.
    \begin{table}[H]
        \centering
        \begin{tabular}{|c|c|}\hline
             \textbf{Cluster Algebras} & \textbf{Legendrian Weaves}  \\ \hline
             Cluster seed &Exact Lagrangian surface $S$ of a weave   \\ \hline 
            Quiver vertices &$\begin{array}{c} \text{Special collection of 1-cycles}\\
            \text{on $S$ (also called Y-cycles)}\end{array}$\\ \hline
            Quiver arrows & $\begin{array}{c} \text{Intersection pairing}\\
            \text{between Y-cycles}\end{array}$\\ \hline
            Cluster variables & $\begin{array}{c} \text{Merodromies along relative}\\
            \text{1-cycles dual to Y-cycles}\end{array}$\\ \hline
            Cluster mutation & Polterovich surgery on $S$  \\ \hline
        \end{tabular}
        \caption{Parallels between Legendrian weaves and their cluster algebras.}
        \label{tab:my_label}
    \end{table}

Although Y-cycles together with their pairings and mutations have been generalized to weaves of general Dynkin types in \cite{CGGLSS}, the rest of the parallels have been absent for general Dynkin types. In this article, we introduce \emph{weighted cycles} on the base surface to fill this gap for general Dynkin types. To put it simply, we define a weighted chain to be a relative 1-chain on the base surface that intersects the weave generically, with the data of a weight (of the corresponding simply-connected Lie group) attached to each segment in the complement of the intersection with the weave; we also introduce homotopies of weighted chains and a condition on how to glue them together to form a weighted cycle. 

We define a commutative algebra $\cW(\ww)$ on the space of formal linear combinations of weighted cycles called the \emph{weighted cycle algebra}, where the product is defined by stacking weighted cycles on top of each other diagrammatically. The weighted cycle algebra $\cW(\ww)$ is a topological incarnation of the Laurent polynomial ring of a cluster seed, as the following theorem indicates.

\begin{mainthm}[Theorem \ref{thm: weighted cycle algebra is a torus}] The weighted cycle algebra $\cW(\ww)$ is a Laurent polynomial algebra of rank $\tau+r(\beta-1)$, where $\tau$ is the number of trivalent vertices in $\ww$, $r$ is the rank of the Dynkin type, and $\beta$ is the number of boundary base points.
\end{mainthm}

Moreover, we define a homotopy-invariant skew-symmetrizable intersection pairing between weighted cycles, which allows us to introduce a quantization $\mathbb{W}(\ww)$ of the weighted cycle algebra in the simply-laced case. We also describe weighted cycle representatives for Y-cycles, and prove the following.

\begin{mainthm}[Theorem \ref{thm: recovering pairing between Y-cycles}] The intersection pairing between weighted cycle representatives of Y-cycles recovers the skew-symmetric pairing between Y-cycles.
\end{mainthm}

A weave also defines a flag moduli space on the base surface, for which we associate a flag (of the same Dynkin type) to each face of the weave and impose a relative position condition on each pair of flags on adjacent faces according to the color of the weave edge. In \cite{CW}, Casals and the author defined a nowhere-vanishing function called \emph{merodromy} for each relative cycle that intersects a Legendrian (Dynkin type A) weave generically. Simply speaking, the flag data give rise to a rank 1 local system on the weave surface; thus, given two non-zero vectors at the two ends of a relative 1-cycle, one can parallel transport the vector at the source to the other endpoint (target) of the relative cycle using the rank 1 local system, and then take the ratio with the vector sitting at the target; this ratio is the merodromy of the relative 1-cycle.

Although a topological interpretation in terms rank 1 local systems on surfaces is no longer available for other Dynkin types, we manage to define a similar nowhere-vanishing function for each weighted cycle on the flag moduli space for weaves of a general Dynkin type. This result generalizes the aforementioned merodromy construction from \cite{CW}. By choosing a suitable collection of weighted cycles, we can represent cluster variables as merodromies, as the following theorem implies.

\begin{mainthm}[Theorem \ref{thm: mutation}] Merodromies of the positive weighted cycles transform according to the cluster mutation formula under a mutation of Y-cycle.
\end{mainthm}

Furthermore, we observe that weighted cycles share many similarities with \emph{webs} \cite{Kuperberg}, which are generators of skein algebras: for example, weighted cycles and webs are both attaching weight data to a network of oriented curves, they both admit quantizations, and they both can give rise to elements in cluster algebras. Yet there is one key difference: the multiplication between weighted cycles is $q$-commutative, whereas the multiplication between webs obeys the skein relation. We believe that weighted cycles should be viewed as localizations of webs, and we further conjecture the following. In particular, the case $G=\SL_2$ is in parallel with \cite{Mul}.

\begin{conj} Let $\Sk(S,G)$ denote the quantum skein algebra associated with a simply-connected Lie group $G$ on a surface $S$. Let $\ww$ be a $G$-weave arising from an ideal triangulation on $S$. Then there exists a quantum algebra homomorphism $\Sk(S,G)\rightarrow \mathbb{W}(\ww)$, mapping each web $W$ to a linear combination of weighted cycles that share the same topological support as $W$.
\end{conj}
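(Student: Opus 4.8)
The plan is to present $\Sk(S,G)$ by generators and relations — webs modulo isotopy together with the Dynkin-type skein relations — and to build the homomorphism $\Phi\colon \Sk(S,G)\to\mathbb{W}(\ww)$ in the usual two stages: define $\Phi$ on an individual web by a state sum adapted to the ideal triangulation underlying $\ww$, then verify that $\Phi$ descends through every defining relation and is multiplicative. Since a web $W$ carries a well-defined topological support (its underlying oriented trivalent graph immersed in $S$), the conjecture forces every weighted cycle occurring in $\Phi(W)$ to share that support; the only remaining freedom is in the weight labels on the segments of $W$, so I would set $\Phi(W)$ equal to the sum, over all \emph{admissible} weight labelings of its strands, of the corresponding weighted cycle, each entering with a $q$-monomial coefficient.

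To fix the labelings and coefficients I would first pass through the classical limit $q=1$, where $\mathbb{W}(\ww)$ degenerates to $\cW(\ww)$ and each merodromy is a regular function on the decorated flag moduli space of the weave. Cutting $S$ along the triangulation presents any web as a gluing of elementary pieces inside single triangles; to such a piece I would assign the state sum in which a weight of $G$ is attached to each strand, subject at every trivalent vertex to the compatibility dictated by the invariant vectors in the relevant triple tensor product, and each admissible state contributes the product of the corresponding weighted-cycle generators. Reassembling the triangles is then a global state sum over matching boundary labels along internal edges. Matching this against the known trace of $W$ as a function on the moduli space, and using Theorem~\ref{thm: mutation} to identify merodromies of the distinguished weighted cycles with the cluster coordinates, should pin down $\Phi$ at $q=1$ and simultaneously explain why the support is preserved.

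Quantization then promotes each classical monomial to its $q$-ordered counterpart in $\mathbb{W}(\ww)$, with $q$-powers read off from the skew-symmetric intersection pairing that defines the quantum algebra. Here multiplicativity $\Phi(W_1 W_2)=\Phi(W_1)\Phi(W_2)$ — with the left product the stack-and-resolve product of $\Sk(S,G)$ and the right product the $q$-commutative product of $\mathbb{W}(\ww)$ — must be checked: because weighted cycles multiply by stacking with a $q$-factor governed by their intersection number, one compares this factor against the framing data built into the skein product. Homotopy invariance of the pairing should make this a bookkeeping computation rather than a conceptual difficulty.

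The hard part will be showing that the defining skein relations map to identities that already hold in $\mathbb{W}(\ww)$. This is exactly where the tension flagged in the introduction appears: in $\Sk(S,G)$ a local web configuration equals a signed $q$-linear combination of its resolutions, but their images lie in a $q$-\emph{commutative} algebra that has no skein relation of its own. The burden is therefore to prove that the relations intrinsic to $\mathbb{W}(\ww)$ — homotopy of weighted cycles, the gluing axiom, and $q$-commutativity — reproduce each skein relation after applying $\Phi$. I expect this to reduce to a finite list of local identities, one per fundamental relation of the relevant spider (such as the circle, bigon, and square $H{=}I$ reductions in type $A$, and their analogues in types $B$, $C$, $D$, $G_2$), each of which becomes an identity among $q$-monomials verifiable by a fusion computation. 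Establishing these uniformly across all Dynkin types, rather than type by type, is the principal obstacle and is presumably why the statement is posed as a conjecture.
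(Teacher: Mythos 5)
The statement you are addressing is posed in the paper as a \emph{conjecture}: the paper offers no proof of it, so there is nothing to compare your argument against. What you have written is the natural plan of attack --- a quantum-trace-style state sum adapted to the ideal triangulation, in the spirit of the known embeddings of skein algebras into quantum tori --- but it is a plan, not a proof, and you essentially concede this in your final paragraph. Concretely, the map $\Phi$ is never defined: the ``admissible weight labelings'' and the ``$q$-monomial coefficients'' are precisely the data the conjecture asks for, and nothing in your outline pins them down; the support condition is read off from the statement of the conjecture as a constraint on your guess rather than established as a property of a constructed map. Likewise, the two verifications that constitute the entire mathematical content --- that $\Phi$ annihilates every defining skein relation, and that $\Phi$ intertwines the stack-and-resolve product of $\Sk(S,G)$ with the $q$-commutative product of $\mathbb{W}(\ww)$ --- are both deferred to ``a finite list of local identities'' and ``bookkeeping,'' with not a single one carried out. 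Since $\mathbb{W}(\ww)$ is a quantum torus (Theorem \ref{thm: weighted cycle algebra is a torus} together with the quantization), whereas $\Sk(S,G)$ is in general not $q$-commutative, the existence of such a homomorphism is exactly the hard point; asserting that it reduces to checkable local computations restates the conjecture rather than proving it.

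Two further concrete problems. First, the tools you cite do not do the work you assign them: Theorem \ref{thm: mutation} asserts compatibility of the mutation homomorphism with merodromies, not an identification of web trace functions with cluster coordinates, so the step ``matching against the known trace of $W$'' has no support in the paper. Second, the paper constructs $\cW(\ww)$ and $\mathbb{W}(\ww)$ for weaves on a disk, with only a brief remark (Subsection \ref{subsec: quantum group}) that the construction extends to a punctured disk; for a weave arising from an ideal triangulation of a general surface $S$ the target algebra itself, including the well-definedness of the intersection pairing and the analogue of Theorem \ref{thm: weighted cycle algebra is a torus}, would first have to be established before your $\Phi$ can even be written down. So the proposal is a sensible research program, but it contains genuine gaps at every step that distinguishes a proof from a restatement of the conjecture.
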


\begin{figure}[H]
    \centering
    \begin{tikzpicture}[scale=1.5]
        \foreach \i in {0,1,2}
        {
        \draw [very thick] (90+\i*120:2) -- (-30+\i*120:2);
        \draw [decoration={markings, mark=at position 0.5 with {\arrow{>}}}, postaction={decorate}] (90+\i*120:2) -- (0,0);
        \node at (70+\i*120:0.7) [] {$\omega_2$};
        }
    \end{tikzpicture}\hspace{2cm}
        \begin{tikzpicture}[scale=1.5]
    \foreach \i in {0,1,2}
    {
    \draw [very thick] (90+120*\i:2) -- (-30+120*\i:2);
    \draw [blue] (30+120*\i:1) -- (0,0);
    \draw [red] (90+120*\i:1) -- (0,0);
    \draw [red] (110+120*\i:1.34) -- (90+120*\i:1) -- (70+120*\i:1.34);
    }
    \draw [dashed, decoration={markings, mark=at position 0.5 with {\arrow{>}}}, postaction={decorate}] (90:2) -- node [left] {$-\omega_1$} (0.2,0.5);
    \draw [dashed, decoration={markings, mark=at position 0.5 with {\arrow{>}}}, postaction={decorate}] (-30:2) node [above left] {$-\omega_1 \quad$} -- (0.2,0.5) node [right] {$\quad \omega_1-\omega_2$};
    \draw [dashed, decoration={markings, mark=at position 0.5 with {\arrow{>}}}, postaction={decorate}] (-150:2) node [above right] {$\quad -\omega_1$} -- (0.2,0.5);
    \node at (0.1,0.2) [] {$\omega_2$};
    \node at (0.2,0.5) [] {$\bullet$};
    \end{tikzpicture}
    \caption{Left: an $\SL_3$-web. Right: the corresponding weighted cycle.}
\end{figure}

The article is structured as follows: Section \ref{sec2} reviews some basic background on decorated flags and weaves; Section \ref{sec3} gives the definitions of weighted cycles and related concepts, and proves our main theorems in the simply-laced case; Section \ref{sec4} revisits some well-known constructions in cluster algebras and showcases how to describe them using weighted cycles; Section \ref{sec5} goes over the construction of (co)weighted cycles and their skew-symmetrizable intersection pairing in the non-simply-laced case.

\subsection*{Acknowledgements} The author would like to thank Roger Casals, Honghao Gao, James Hughes, Thang T. Q. Le, Lenhard Ng, Linhui Shen, Zhe Sun, and Eric Zaslow for great inspiration and cheerful discussions through the development of this project.

\section{Preliminaries}\label{sec2}

\subsection{Weaves and Y-cycles} Weaves were first introduced in \cite{CZ} and they were originally devised as combinatorial tools to describe Legendrian surfaces in 1-jet spaces. Weaves were later generalized to all Dynkin types in \cite{CGGLSS}, and weaves of Dynkin type A were the ones associated with Legendrian surfaces. In this subsection, we briefly review some basics about weaves of simply-laced Dynkin types, and for simplicity, we mostly only consider weaves on a disk (with Subsection \ref{subsec: quantum group} as an exception).

Recall that simply-laced Dynkin diagrams are classified into two infinite families (types A and D) with three exceptions ($\mathrm{E}_6$, $\mathrm{E}_7$, and $\mathrm{E}_8$). They are of the following forms.
\begin{figure}[H]
    \centering
    \begin{tikzpicture}
        \foreach \i in {1,2,4,5}{
        \node (\i) at (\i,0) [] {$\bullet$};
        }
        \node (3) at (3,0) [] {$\cdots$};
        \draw (1) -- (2) -- (3) -- (4) -- (5);
        \node at (1) [below] {$1$};
        \node at (2) [below] {$2$};
        \node at (4) [below] {$n-1$};
        \node at (5) [below] {$n$};
        \node at (3,-1) [] {$\mathrm{A}_n$};
    \end{tikzpicture}\hspace{1cm}
    \begin{tikzpicture}
        \foreach \i in {1,2,4}{
        \node (\i) at (\i,0) [] {$\bullet$};
        }
        \node (3) at (3,0) [] {$\cdots$};
        \node (5) at (5,0.5) [] {$\bullet$};
        \node (6) at (5,-0.5) [] {$\bullet$};
        \draw (1) -- (2) -- (3) -- (4) -- (5);
        \draw (4) -- (6);
        \node at (1) [below] {$1$};
        \node at (2) [below] {$2$};
        \node at (4) [below] {$n-2$};
        \node at (5) [above] {$n-1$};
        \node at (6) [below] {$n$};
        \node at (3,-1) [] {$\mathrm{D}_n$};
    \end{tikzpicture}\hspace{1cm}
    \begin{tikzpicture}
        \foreach \i in {1,...,5}
        {
        \node (\i) at (\i,0) [] {$\bullet$};
        }
        \node (6) at (3,-1) [] {$\bullet$};
        \draw (1) -- (2) -- (3) -- (4) -- (5);
        \draw (3) -- (6);
        \node at (1) [above] {$1$};
        \node at (2) [above] {$2$};
        \node at (3) [above] {$3$};
        \node at (6) [right] {$4$};
        \node at (4) [above] {$5$};
        \node at (5) [above] {$6$};
        \node at (3,-1.5) [] {$\mathrm{E}_6$};
    \end{tikzpicture}\\\begin{tikzpicture}
        \foreach \i in {1,...,6}
        {
        \node (\i) at (\i,0) [] {$\bullet$};
        }
        \node (7) at (3,-1) [] {$\bullet$};
        \draw (1) -- (2) -- (3) -- (4) -- (5) -- (6);
        \draw (3) -- (7);
        \node at (1) [above] {$1$};
        \node at (2) [above] {$2$};
        \node at (3) [above] {$3$};
        \node at (7) [right] {$4$};
        \node at (4) [above] {$5$};
        \node at (5) [above] {$6$};
        \node at (6) [above] {$7$};
        \node at (3,-1.5) [] {$\mathrm{E}_7$};
    \end{tikzpicture}\hspace{2cm}
    \begin{tikzpicture}
        \foreach \i in {1,...,7}
        {
        \node (\i) at (\i,0) [] {$\bullet$};
        }
        \node (8) at (3,-1) [] {$\bullet$};
        \draw (1) -- (2) -- (3) -- (4) -- (5) -- (6) --(7);
        \draw (3) -- (8);
        \node at (1) [above] {$1$};
        \node at (2) [above] {$2$};
        \node at (3) [above] {$3$};
        \node at (8) [right] {$4$};
        \node at (4) [above] {$5$};
        \node at (5) [above] {$6$};
        \node at (6) [above] {$7$};
        \node at (7) [above] {$8$};
        \node at (3,-1.5) [] {$\mathrm{E}_8$};
    \end{tikzpicture}
    \caption{Simply-laced Dynkin diagrams.}
\end{figure}
Each Dynkin diagram defines a Coxeter group $W$, which is generated by \emph{simple reflections} $s_i$ that are in bijection with the vertices in the Dynkin diagram. These simple reflections satisfy the following relations:
\begin{itemize}
    \item $s_i^2=e$,
    \item $s_is_j=s_js_i$ if $i\neq j$ are two non-adjacent vertices in the Dynkin diagram,
    \item $s_is_js_i=s_js_is_j$ if $i\neq j$ are two adjacent vertices in the Dynkin diagram.
\end{itemize}
The last two relations are also known as the \emph{braid relations}.

There is a faithful action of $W$ on the Euclidean space $\mathbb{R}^n$ by reflections over hyperplanes, with $n$ being the number of vertices in the Dynkin diagram. To define such an action, we first fix a basis $\{\alpha_i\}$ such that  
\begin{itemize}
    \item $(\alpha_i,\alpha_i)=2$,
    \item $(\alpha_i,\alpha_j)=0$ if $i\neq j$ are two non-adjacent vertices in the Dynkin diagram,
    \item $(\alpha_i,\alpha_j)=-1$ if $i\neq j$ are two adjacent vertices in the Dynkin diagram.
\end{itemize}
Then the action of the simple reflection $s_i$ is defined to be the reflection over the hyperplane orthogonal to $\alpha_i$; in formula, the action of $s_i$ is
\[
s_i.v:=v-(\alpha_i,v)\alpha_i.
\]
These basis vectors $\alpha_i$'s are called \emph{simple roots}.

\begin{defn} A \emph{weave} of simply-laced Dynkin types (ADE) is a planar graph embedded in the disk with edges labeled by the simple reflections of that Dynkin type, such that each vertex is one of the three types listed in Figure \ref{fig: weave vertices}. A weave edge is said to be \emph{external} if it is incident to the boundary of the disk; otherwise it is said to be \emph{internal}.
\end{defn}

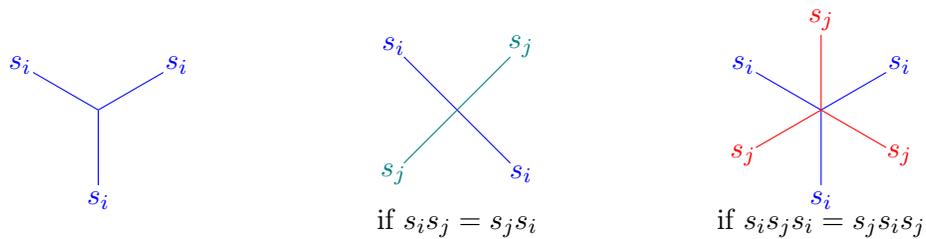
\begin{figure}[H]
    \centering
    \begin{tikzpicture}[baseline=0]
    \draw [blue] (0,-1) node [below] {$s_i$} -- (0,1) node [above] {$s_i$};
    \node [blue] at (0,0) [] {$\bullet$};
    \end{tikzpicture} \hspace{2cm}
    \begin{tikzpicture}[baseline=0]
        \draw[blue] (150:1) -- (0,0);
        \draw[blue] (30:1) -- (0,0);
        \draw[blue] (0,0) -- (0,-1);
        \foreach \i in {1,2,3}
        {
        \node [blue] at (30+120*\i:1.2) [] {$s_i$};
        }
        \end{tikzpicture}
\hspace{2cm}
    \begin{tikzpicture}[baseline=0]
        \draw[blue] (135:1) -- (0,0);
        \draw[blue] (0,0) -- (-45:1);
        \draw[teal] (45:1) -- (0,0);
        \draw[teal] (0,0) -- (-135:1);
        \foreach \i in {1,2}
        {
        \node [blue] at (135+180*\i:1.2) [] {$s_i$};
        \node [teal] at (45+180*\i:1.2) [] {$s_j$};
        }
        \node at (0,-1.5) [] {if $s_is_j=s_js_i$};
    \end{tikzpicture}        \hspace{2cm}\begin{tikzpicture}[baseline=0]
        \draw[blue] (150:1) -- (0,0);
        \draw[blue] (30:1) -- (0,0);
        \draw[red] (0,1) -- (0,0);
        \draw[blue] (0,0) -- (0,-1);
        \draw[red] (0,0) -- (-30:1);
        \draw[red] (0,0) -- (-150:1);
        \foreach \i in {1,2,3}
        {
        \node [blue] at (30+120*\i:1.2) [] {$s_i$};
        \node [red] at (90+120*\i:1.2) [] {$s_j$};
        }
        \node at (0,-1.5) [] {if $s_is_js_i=s_js_is_j$};
    \end{tikzpicture}
    \caption{Allowable vertices in a weave.}
    \label{fig: weave vertices}
\end{figure}

\begin{rmk} The bivalent weave vertex is a new vertex type that we are introducing in this article. It serves the purpose of resolving conflicting orientations on weave edges, which are needed to define merodromies later in Subsection \ref{subsec: 3.6}.
\end{rmk}

\begin{defn} In addition to the definition of weaves above, we would like to introduce \emph{boundary base points}: they are a finite collection of points along the boundary of the disk away from any external weave edges. We require each weave to have at least one boundary base point. The connected components of the complement of boundary base points are called \emph{boundary intervals}.
\end{defn}

\begin{defn}\label{defn: Y-cycle} Let $\ww$ be a weave and let $E(\ww)$ be the set of weave edges in $\ww$. A \emph{Y-cycle} is a map $\gamma:E(\ww)\rightarrow \mathbb{Z}_{\geq 0}$ satisfying the following three conditions:
\begin{itemize}
\item Between the two weave edges $a$ and $b$ incident to a bivalent weave vertex, $\gamma(a)=\gamma(b)$.
    \item Among the three weave edges $a,b,c$ incident to a trivalent weave vertex, the minimum of $\gamma(a)$, $\gamma(b)$, and $\gamma(c)$ is achieved at least twice.
    \item Among the four weave edges $a,b,c,d$ incident to a tetravalent weave vertex (in a cyclic order), $\gamma(a)=\gamma(c)$ and $\gamma(b)=\gamma(d)$.
    \item Among the six weave edges $a,b,c,d,e,f$ incident to a hexavalent weave vertex (in a cyclic order), $\gamma(a)-\gamma(d)=\gamma(e)-\gamma(b)=\gamma(c)-\gamma(f)$. 
\end{itemize}
The \emph{support} of a Y-cycle is the subgraph of $\ww$ spanned by the weave edges $e$ with $\gamma(e)>0$. The number $\gamma(e)$ is called the \emph{mulitplicity} of $\gamma$ at $e$. A Y-cycle is \emph{unfrozen} if $\gamma(e)=0$ for all external weave edges $e$; a Y-cycle is \emph{frozen} if it is not unfrozen. A set of Y-cycles is said to be \emph{linearly independent} if they are linearly independent as functions on $E(\ww)$.
\end{defn}

\begin{defn}\label{defn: pairing between Y-cycles} On a weave $\ww$, let $Y(\ww)$ be the set of Y-cycles, $V(\ww)$ be the set of vertices, and $X(\ww)$ be the set of external weave edges. Let $\frac{1}{2}\mathbb{Z}=\{0,\pm \frac{1}{2},\pm 1,\pm\frac{3}{2},\dots\}$ denote the set of half-integers. The \emph{intersection pairing} between Y-cycles is a map $\{\cdot, \cdot\}:Y(\ww)\times Y(\ww)\rightarrow\frac{1}{2}\mathbb{Z}$ defined by
\[
\{\gamma,\gamma'\}:=\sum_{v\in V(\ww)}\{\gamma,\gamma'\}_v+\sum_{e,e'\in X(\ww)}\gamma(e)\gamma'(e')\{e,e'\}.
\]
The terms in the first summation are defined by
\[
\{\gamma,\gamma'\}_v=\left\{\begin{array}{ll}
\det \begin{pmatrix}
    1 & 1 & 1 \\ \gamma(a) & \gamma(b) &\gamma(c)\\ \gamma'(a) & \gamma'(b) &\gamma'(c)
\end{pmatrix} & \text{if $\begin{tikzpicture}[baseline=0]
    \draw[blue] (150:1) -- node [above] {$b$} (0,0);
        \draw[blue] (30:1) -- node [above] {$a$} (0,0);
        \draw[blue] (0,0) -- node [left] {$c$} (0,-1);
        \node at (0,0) [below right] {$v$};
\end{tikzpicture}$} \\
& \\ 
\dfrac{1}{2}\left(\det \begin{pmatrix}
    1 & 1 & 1 \\ \gamma(a) & \gamma(c) &\gamma(e)\\ \gamma'(a) & \gamma'(c) &\gamma'(e)
\end{pmatrix}+\det \begin{pmatrix}
    1 & 1 & 1 \\ \gamma(b) & \gamma(d) &\gamma(f)\\ \gamma'(b) & \gamma'(d) &\gamma'(f)
\end{pmatrix}\right) & \text{if $\begin{tikzpicture}[baseline=0]
\draw[blue] (150:1)node [above] {$c$}  -- (0,0);
        \draw[blue] (30:1) node [above] {$a$}  -- (0,0);
        \draw[red] (0,1) node [right] {$b$} --  (0,0);
        \draw[blue] (0,0) -- (0,-1)node [right] {$e$} ;
        \draw[red] (0,0) --  (-30:1)node [below] {$f$};
        \draw[red] (0,0) --  (-150:1)node [below] {$d$};
        \node at (0,0) [left] {$v$};
\end{tikzpicture}$}\\
& \\
0 & \text{otherwise}.
\end{array}\right.
\]
The bracket $\{e,e'\}$ in the second summation is skew-symmetric and is $0$ unless both $e$ and $e'$ are on the same boundary interval. When they are on the same boundary interval, let us assume without loss of generality that $e'$ preceeds $e$ in the clockwise direction, separated by external weave edges of colors $s_{i_1},s_{i_2},\dots, s_{i_{k-1}}$ in the clockwise direction (as in Figure \ref{fig: intersection between frozen Y-cycles}). Suppose $e$ is of color $s_{i_0}$ and $e'$ is of color $s_{i_k}$. We then define 
\[
\{e,e'\}=\frac{1}{2}(\alpha_{i_0}, s_{i_1}\cdots s_{i_{k-1}}.\alpha_{i_k}),
\]
where $\alpha_{i_0}$ and $\alpha_{i_k}$ are simple roots.
\end{defn}
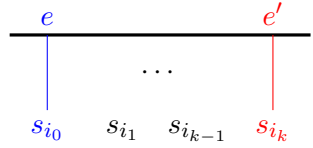
\begin{figure}[H]
    \centering
    \begin{tikzpicture}
        \draw [very thick] (-0.5,0) -- (3.5,0);
        \draw [blue] (0,0) node [above] {$e$} -- (0,-1) node [below] {$s_{i_0}$};
        \draw [red] (3,0) node [above] {$e'$} -- (3,-1) node [below] {$s_{i_k}$};
        \node at (1,-1) [below] {$s_{i_1}$};
        \node at (2,-1) [below] {$s_{i_{k-1}}$};
        \node at (1.5,-0.5) [] {$\cdots$};
    \end{tikzpicture}
    \caption{Intersection pairing between frozen Y-cycles.}
    \label{fig: intersection between frozen Y-cycles}
\end{figure}

\begin{lem} At a hexavalent weave vertex, 
\begin{align*}
\det \begin{pmatrix}
    1 & 1 & 1 \\ \gamma(a) & \gamma(c) &\gamma(e)\\ \gamma'(a) & \gamma'(c) &\gamma'(e)
\end{pmatrix}=&\det \begin{pmatrix}
    1 & 1 & 1 \\ \gamma(b) & \gamma(d) &\gamma(f)\\ \gamma'(b) & \gamma'(d) &\gamma'(f)
\end{pmatrix}\\
=&\frac{1}{2}\left(\det \begin{pmatrix}1 & 1 & 1 \\ \gamma(c) & \gamma(b) & \gamma(a) \\ \gamma'(c) & \gamma'(b) & \gamma'(a)\end{pmatrix} - \det \begin{pmatrix}1 & 1 & 1 \\ \gamma(d) & \gamma(e) & \gamma(f) \\ \gamma'(d) & \gamma'(e) & \gamma'(f)\end{pmatrix}\right).
\end{align*}
Thus, the intersection pairing between Y-cycles at a hexavalent weave vertex can be replaced by either of the determinants in this lemma, and either of them is equal to the pairing given in \cite{CGGLSS}.
\end{lem}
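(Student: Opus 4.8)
The plan is to exploit the single linear constraint
\[
\gamma(a)-\gamma(d)=\gamma(e)-\gamma(b)=\gamma(c)-\gamma(f)
\]
imposed on a Y-cycle at a hexavalent vertex (together with its primed analogue for $\gamma'$), and to reduce both asserted identities to elementary determinant manipulations. Write $p$ and $p'$ for the common values of these three differences for $\gamma$ and $\gamma'$, so that $\gamma(d)=\gamma(a)-p$, $\gamma(b)=\gamma(e)-p$, $\gamma(f)=\gamma(c)-p$, and likewise with primes and $p'$. Denote by $D_1$ and $D_2$ the two determinants over the blue edges $a,c,e$ and over the red edges $b,d,f$ respectively (the left-hand sides of the two claimed identities), and by $E_1,E_2$ the two determinants on the right. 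Since each matrix carries a row of $1$'s, it is convenient to attach to every edge $x$ the planar point $P_x:=(\gamma(x),\gamma'(x))$ and to read each determinant as twice the signed area of the triangle on the three corresponding points; concretely $\det\bigl(\begin{smallmatrix}1&1&1\\ u_1&u_2&u_3\\ v_1&v_2&v_3\end{smallmatrix}\bigr)=(u_2-u_1)(v_3-v_1)-(u_3-u_1)(v_2-v_1)$, so each determinant depends only on the differences of its entries.

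For the first equality, I would note that the constraint is precisely the statement that $P_a=P_d+\vec v$, $P_c=P_f+\vec v$, and $P_e=P_b+\vec v$ with $\vec v=(p,p')$. Thus the ordered triangle $(P_a,P_c,P_e)$ is the translate by $\vec v$ of the ordered triangle $(P_d,P_f,P_b)$; translation preserves signed area, and the cyclic relabeling $(P_d,P_f,P_b)\mapsto(P_b,P_d,P_f)$ is an even permutation and hence also preserves signed area. Therefore $D_1=D_2$. The same conclusion can be reached without geometry: substitute the expressions for $\gamma(b),\gamma(d),\gamma(f)$ into $D_2$, remove the $p$ and $p'$ offsets by subtracting $p$ (resp.\ $p'$) times the top row from the middle (resp.\ bottom) row, and observe that the result is obtained from $D_1$ by the $3$-cycle of columns sending $(e,a,c)$ to $(a,c,e)$, which is again an even permutation.

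With $D_1=D_2$ in hand, the second identity $D_1=\tfrac12(E_1-E_2)$ is equivalent to $D_1+D_2=E_1-E_2$, which I would verify by direct expansion via the difference formula above. The key structural point is that, after substituting the constraint, the terms proportional to $p$ and to $p'$ cancel in the combination $E_1-E_2$: the coefficient of $p$ is a multiple of $(\gamma'(a)-\gamma'(c))+(\gamma'(c)-\gamma'(a))=0$, and symmetrically for $p'$. This is the reason the identity is independent of the offsets, as it must be. The surviving bilinear terms then collect into $2\bigl(\gamma(e)\gamma'(a)-\gamma(e)\gamma'(c)-\gamma(c)\gamma'(a)-\gamma(a)\gamma'(e)+\gamma(a)\gamma'(c)+\gamma(c)\gamma'(e)\bigr)$, which is exactly $2D_1$; this finishes the computation.

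The only genuinely error-prone part is the sign and orientation bookkeeping: one must fix the counterclockwise cyclic order $a,b,c,d,e,f$ of the six edges once and for all in accordance with Figure \ref{fig: weave vertices}, keep straight which edges are blue and which are red, and check that each relabeling invoked is an even permutation so that no spurious sign creeps in. I expect this, rather than any of the algebra, to be the main hazard. Finally, the concluding assertion that these determinants agree with the pairing of \cite{CGGLSS} is not a fresh calculation but a matter of unwinding the definition given there and matching conventions; I would identify $\tfrac12(E_1-E_2)$ as the form literally appearing in \textit{loc. cit.} and then transport the statement to the symmetric forms $D_1$ and $D_2$ using the two equalities established above.
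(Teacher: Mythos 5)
Your proposal is correct and takes essentially the same route as the paper: the first equality is obtained from the hexavalent constraint acting as a row operation (translation by the common offset) followed by an even cyclic permutation of columns, and the second equality is verified by direct expansion in which the offset terms cancel, leaving exactly the bilinear combination equal to twice the first determinant. The signed-area/translation gloss is just a geometric repackaging of that row-operation computation, so the two arguments coincide in substance.
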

\begin{proof} On the one hand, let $u=\gamma(a)-\gamma(d)=\gamma(e)-\gamma(b)=\gamma(c)-\gamma(f)$ and let $u'$ be defined similarly for $\gamma'$. Then 
\[
   \det \begin{pmatrix}
    1 & 1 & 1 \\ \gamma(b) & \gamma(d) &\gamma(f)\\ \gamma'(b) & \gamma'(d) &\gamma'(f)
\end{pmatrix}=\det \begin{pmatrix}
    1 & 1 & 1 \\ \gamma(b)+u & \gamma(d)+u &\gamma(f)+u\\ \gamma'(b)+u' & \gamma'(d)+u' &\gamma'(f)+u'\end{pmatrix}
    =\det \begin{pmatrix}
    1 & 1 & 1 \\ \gamma(a) & \gamma(c) &\gamma(e)\\ \gamma'(a) & \gamma'(c) &\gamma'(e)
\end{pmatrix}.
\]
On the other hand, note that
\begin{align*}&\det \begin{pmatrix}1 & 1 & 1 \\ \gamma(c) & \gamma(b) & \gamma(a) \\ \gamma'(c) & \gamma'(b) & \gamma'(a)\end{pmatrix} - \det \begin{pmatrix}1 & 1 & 1 \\ \gamma(d) & \gamma(e) & \gamma(f) \\ \gamma'(d) & \gamma'(e) & \gamma'(f)\end{pmatrix}\\
=&\gamma(c')(\gamma(a)-\gamma(b))+\gamma'(b)(\gamma(c)-\gamma(a))+\gamma'(a)(\gamma(b)-\gamma(c))\\
&-\gamma'(d)(\gamma(f)-\gamma(e))-\gamma'(e)(\gamma(d)-\gamma(f))-\gamma'(f)(\gamma(e)-\gamma(d))\\
=&\gamma(c') \gamma(a)+\gamma(b)(\gamma'(a)-\gamma'(c))+\gamma'(b)(\gamma(c)-\gamma(a))-\gamma'(a)\gamma(c)\\
&-\gamma'(d)\gamma(f)+\gamma(e)(\gamma'(d)-\gamma'(f))-\gamma'(e)(\gamma(d)-\gamma(f))+\gamma'(f)\gamma(d)\\
=&\gamma(c') \gamma(a)+\gamma(b)(\gamma'(f)-\gamma'(d))+\gamma'(b)(\gamma(f)-\gamma(d))-\gamma'(a)\gamma(c)\\
&-\gamma'(d)\gamma(f)+\gamma(e)(\gamma'(a)-\gamma'(c))-\gamma'(e)(\gamma(a)-\gamma(c))+\gamma'(f)\gamma(d)\\
=&\det \begin{pmatrix}
    1 & 1 & 1 \\ \gamma(a) & \gamma(c) &\gamma(e)\\ \gamma'(a) & \gamma'(c) &\gamma'(e)
\end{pmatrix}+\det \begin{pmatrix}
    1 & 1 & 1 \\ \gamma(b) & \gamma(d) &\gamma(f)\\ \gamma'(b) & \gamma'(d) &\gamma'(f)
\end{pmatrix}.\qedhere
\end{align*}
\end{proof}

\begin{defn}\label{defn: weave equivalences} There are certain moves between weaves that are considered \emph{equivalences} (Figure \ref{fig: weave equivalences}). Weaves that are related by weave equivalences are said to be \emph{equivalent}. Note that there is a natural bijection between Y-cycles on equivalent weaves: the correspondence is dictated by the multiplicities on the surrounding weave edges together with the conditions on multiplicities at the weave vertices.
\end{defn}
\begin{figure}[H]
    \centering
    \begin{tikzpicture}[baseline=0,scale=0.7]
        \draw [blue] (-1,-1) -- (-0.5,0) -- (-1,1);
        \draw [red] (-1,0) -- (-0.5,0);
        \draw [red] (-0.5,0) to [out=60,in=180] (0,0.5) to [out=0,in=120] (0.5,0) to [out=-120,in=0] (0,-0.5) to [out=180,in=-60] (-0.5,0);
        \draw [blue] (-0.5,0) -- (0.5,0);
        \draw [red] (0.5,0) -- (1,0);
        \draw [blue] (1,-1) -- (0.5,0) -- (1,1);
    \end{tikzpicture} \quad $\overset{\text{I}}{\longleftrightarrow}$ \quad \begin{tikzpicture}[baseline=0, scale=0.7]
    \draw [blue] (-1,-0.8) -- (1,-0.8);
    \draw [red] (-1,0) -- (1,0);
    \draw [blue] (-1,0.8) -- (1,0.8);
    \end{tikzpicture} \hspace{1cm}
    \begin{tikzpicture}[baseline=0,scale=0.7]
    \draw [blue] (-1,-1) -- (-0.5,0) -- (-1,1);
    \draw [blue] (-0.5,0) -- (0.5,0);
    \draw [red] (0,1) -- (0.5,0) -- (0,-1);
    \draw [blue] (1,1) -- (0.5,0) -- (1,-1);
    \draw [red] (0.5,0) -- (1,0);
    \end{tikzpicture} \quad $\overset{\text{II}}{\longleftrightarrow}$ \begin{tikzpicture}[baseline=0,scale=0.7]
    \draw [blue] (-1,-1) -- (0,-0.5) -- (1,-1);
    \draw [blue] (-1,1) -- (0,0.5) -- (1,1);
    \draw [red] (0,1) -- (0,0.5) to [out=-150,in=90] (-0.5,0) to [out=-90,in=150] (0,-0.5) -- (0,-1);
    \draw [red] (0,0.5) -- (0.5,0) -- (0,-0.5);
    \draw [red] (0.5,0) -- (1,0);
    \draw [blue] (0,0.5) -- (0,-0.5);
    \end{tikzpicture}\hspace{1cm} 
    \begin{tikzpicture}[baseline=0,scale=0.7]
    \foreach \i in {0,...,3}
    {
    \draw [red] (\i*90:1) -- (\i*90:0.5);
    \draw [red] (\i*90+90:0.5) -- (\i*90:0.5);
    }
    \foreach \i in {-1,1}
    {
    \draw [blue] (-1,0.7*\i) -- (-0.5,0);
    \draw [blue] (1,0.7*\i) -- (0.5,0);
    \draw [teal] (0.7*\i,1) -- (0,0.5);
    \draw [teal] (0.7*\i,-1) -- (0,-0.5);
    }
    \draw [blue] (-0.5,0) -- (0.5,0);
    \draw [teal] (0,0.5) -- (0,-0.5);
    \end{tikzpicture} \quad $\overset{\text{III}}{\longleftrightarrow}$ \quad \begin{tikzpicture}[baseline=0,scale=0.7]
    \foreach \i in {0,...,3}
    {
    \draw [red] (\i*90:1) -- (\i*90:0.5);
    \draw [red] (\i*90+90:0.5) -- (\i*90:0.5);
    }
    \foreach \i in {-1,1}
    {
    \draw [blue] (\i,-0.7) -- (0,-0.5);
    \draw [blue] (\i,0.7) -- (0,0.5);
    \draw [teal] (0.7,\i) -- (0.5,0);
    \draw [teal] (-0.7,\i) -- (-0.5,0);
    }
    \draw [teal] (-0.5,0) -- (0.5,0);
    \draw [blue] (0,0.5) -- (0,-0.5);
    \end{tikzpicture}\\
    \vspace{1cm}
    \begin{tikzpicture}[baseline=0,scale=0.7]
    \draw[blue] (-1,1) to [out=-45,in=180] (0,-0.5) to [out=0,in=-135] (1,1);
    \draw [teal] (-1,-1)  to [out=45,in=180] (0,0.5) to [out=0,in=135] (1,-1);
    \end{tikzpicture} \quad $\overset{\text{IV}}{\longleftrightarrow}$ \quad 
    \begin{tikzpicture} [baseline=0,scale=0.7]
    \draw [blue] (-1,0.7) -- (1,0.7);
    \draw [teal] (-1,-0.7) -- (1,-0.7);
    \end{tikzpicture}\hspace{1cm}
    \begin{tikzpicture}[baseline=0,scale=0.7]
    \draw [blue] (-1,1) -- (-0.2,0) -- (-1,-1);
    \draw [blue] (-0.2,0) -- (1,0);
    \draw [teal] (0,1) to [out=-90,in=90] (0.5,0) to [out=-90,in=90] (0,-1);
    \end{tikzpicture} \quad $\overset{\text{V}}{\longleftrightarrow}$ \quad 
    \begin{tikzpicture}[baseline=0,scale=0.7]
    \draw [blue] (-1,1) -- (0.2,0) -- (-1,-1);
    \draw [blue] (0.2,0) -- (1,0);
    \draw [teal] (0,1) to [out=-90,in=90] (-0.5,0) to [out=-90,in=90] (0,-1);
    \end{tikzpicture}\hspace{1cm}
    \begin{tikzpicture}[baseline=0,scale=1.2]
        \draw [blue] (0,0) -- (1,0);
    \end{tikzpicture} \quad $\overset{\text{VI}}{\longleftrightarrow}$ \quad \begin{tikzpicture}[baseline=0,scale=1.2]
        \draw [blue] (0,0) -- (1,0);
        \node [blue] at (0.5,0) [] {$\bullet$};
    \end{tikzpicture}
    \caption{Weave equivalences.}
    \label{fig: weave equivalences}
\end{figure}
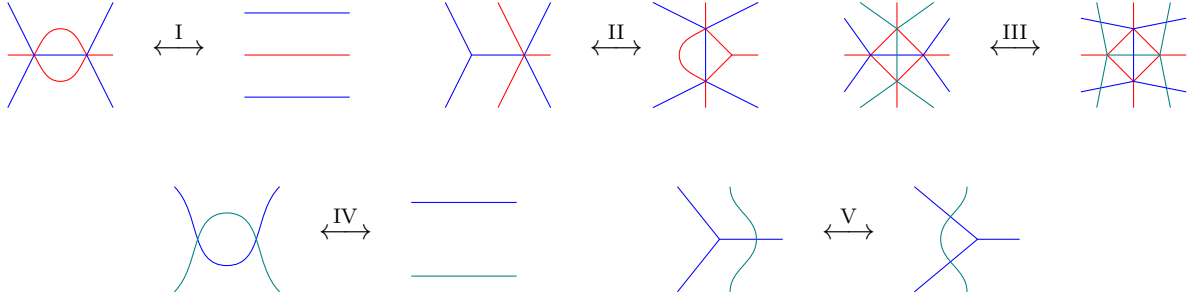

\begin{defn} An unfrozen Y-cycle $\gamma$ is called a \emph{short I-cycle} if there is a unique internal weave edge $e$ such that $\gamma(e)\neq 0$. Given a weave $\ww$ and a short I-cycle $\gamma$ on $\ww$, we can perform a \emph{mutation} at $\gamma$ to produce a new weave $\ww'$. In the new weave $\ww'$, the short I-cycle $\gamma$ is replaced by a new short I-cycle $\gamma'$. Note that two weaves differ by a mutation are not equivalent to each other.
\begin{figure}[H]
    \centering
    \begin{tikzpicture}[baseline=0,scale=0.8]
        \draw [blue] (-1,-1) -- (-0.5,0) -- (-1,1);
        \draw [blue] (1,1) -- (0.5,0) -- (1,-1);
        \draw [blue] (-0.5,0) -- node [above] {$\gamma$} (0.5,0);
        \node at (0,-1.5) [] {$\ww$};
    \end{tikzpicture} \hspace{1cm} $\longleftrightarrow$ \hspace{1cm}
        \begin{tikzpicture}[baseline=0,scale=0.8]
        \draw [blue] (-1,-1) -- (0,-0.5) -- (1,-1);
        \draw [blue] (1,1) -- (0,0.5) -- (-1,1);
        \draw [blue] (0,-0.5) -- node [right] {$\gamma'$} (0,0.5);
        \node at (0,-1.5) [] {$\ww'$};
    \end{tikzpicture}
    \caption{Mutation at a short I-cycle}
    \label{fig:mutation}
\end{figure}
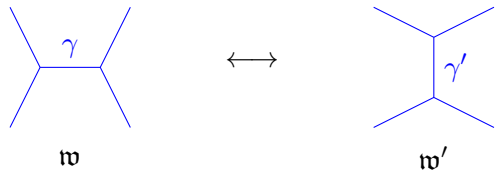
\end{defn}

\subsection{Decorated Flags, Distances, and Flag Moduli Spaces}\label{subsec: 2.2}

Let us recall a few definitions and constructions for decorated flags in this subsection.

Let $G$ be a simply-connected simple linear algebraic group. For simplicity, we assume that $G$ is of simply-laced Dynkin type (ADE) here; the non-simply-laced case will be covered in Section \ref{sec5}. Let us fix a Borel subgroup $B\subset G$ and let $N:=[B,B]$ be its maximal unipotent subgroup. The \emph{flag variety} $\cB$ can be identified with the quotient space $G/B$, whose elements are hence called \emph{flags} or \emph{undecorated flags}.

The \emph{decorated flag variety} is defined to be the quotient space $\cA:=G/N$, whose elements are called \emph{decorated flags}. There is a natural projection map $\pi:\cA\rightarrow \cB$. Picking a decorated flag that maps to a particular flag is called choosing a \emph{decoration}. If we fix a maximal torus $T\subset B$, the space of decorations over a fixed flag is isomorphic to $T$, as a result of the Levi decomposition $B=TN$.

From the fixed maximal torus $T$ we get a Weyl group $W:=N_GT/T$, which happens to be the Coxeter group defined by the Dynkin type (this is how simple linear algebraic groups are classified). The Weyl group $W$ acts on $T$ and hence also on its weight lattice $X^*(T):=\Hom(T,\mathbb{G}_m)$. The induced action of $W$ on the linearization $X^*(T)\underset{\mathbb{Z}}{\otimes}\mathbb{R}$ coincides with the faithful action of $W$ on the Euclidean space $\mathbb{R}^n$. In particular, the choice of Borel subgroup $B\supset T$ gives rise to a basis $\{\omega_i\}$ of $X^*(T)$ called the \emph{fundamental weights}, which is dual to the basis of simple roots $\{\alpha_i\}$ in the sense that $(\alpha_i,\omega_j)=\delta_{ij}$.

A \emph{reduced word} for a Weyl group element $w$ is a product with the fewest Coxeter generators that multiply to $w$. The number of Coxeter generators in a reduced word of $w$ is called the \emph{length} of $w$. Any two reduced words of the same Weyl group element $w$ can be transformed into each other by using only the braid relations.

By fixing a collection of Chevalley generators of $G$, we obtain for each simple root $\alpha_i$ a group homomorphism $\varphi_i:\SL_2\rightarrow G$. For each Coxeter generator $s_i$ of the Weyl group, we define two lifts 
\[
\overline{s}_i=\varphi_i\begin{pmatrix} 0 & -1 \\ 1 & 0 \end{pmatrix}  \quad \text{and} \quad \doverline{s}_i=\varphi_i\begin{pmatrix} 0 & 1 \\ -1 & 0 \end{pmatrix}.
\]
Note that both lifts satisfy the braid relations, and therefore we can use any reduced word of $w$ to define lifts $\overline{w}$ and $\doverline{w}$ for a Weyl group element $w$.

The \emph{Bruhat decomposition} of $G$ states that $G=\bigsqcup_{w\in W} BwB$. It can be further refined to 
\[
G=\bigsqcup_{w\in W} N\overline{w}TN=\bigsqcup_{w\in W}N\doverline{w}TN.
\]

\begin{defn} For a pair of flags $xB$ and $yB$, the \emph{Tits distance} $w(xB,yB)$ is the unique Weyl group element $w$ such that $x^{-1}y\in BwB$.
\end{defn}

Goncharov and Shen introduced an $h$ distance for a pair decorated flags in \cite{GS1}. We will use two versions of their $h$ distances in this article.

\begin{defn}\label{defn: h distance} For a pair of decorated flags $xN$ and $yN$, the \emph{$h_+$ distance} $h_+(xN,yN)$ is defined to be the unique element $h\in T$ such that $x^{-1}y\in N\overline{w}hN$, and the \emph{$h_-$ distance} $h_-(xN,yN)$ is defined to be the unique element $h\in T$ such that $x^{-1}y\in N\doverline{w}hN$.
\end{defn}

\begin{lem}\label{lem: invariant under diagonal G-action} Tits distance between flags and both versions of $h$ distances between decorated flags are invariant under the respective diagonal $G$-actions.
\end{lem}
\begin{proof} It follows from the fact that these distances only depend on the product $x^{-1}y$, which is invariant under the diagonal $G$-actions $g.(xB,yB):=(gxB,gyB)$ and $g.(xN,yN):=(gxN,gyN)$.
\end{proof}

Recall that the \emph{coroot lattice} $X_*(T):=\Hom(\mathbb{G}_m,T)$ is the lattice dual to $X^*(T)$. The \emph{simple coroots} $\{\alpha_i^\vee\}$ is the basis of $X_*(T)$ dual to $\{\omega_i\}$. We will use exponentiation notation $p^{\alpha^\vee}$ to denote the image of $p\in \mathbb{G}_m$ under the an element $\alpha^\vee$ in the coroot lattice; in particular, for a coroot $\alpha_i^\vee$, $p^{\alpha_i^\vee}=\varphi_i\begin{pmatrix} p & 0 \\ 0 & p^{-1}\end{pmatrix}$ for the group homomorphism $\varphi_i:\SL_2\rightarrow G$.

\begin{lem}\label{lem: signs of h-distances} Suppose $x^{-1}y\in Bs_iB$. Then $h_\pm(xN,yN)=(-1)^{\alpha_i^\vee}h_\mp(xN,yN)$.
\end{lem}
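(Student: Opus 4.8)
The plan is to reduce the entire statement to a single identity in $\SL_2$ relating the two lifts $\overline{s}_i$ and $\doverline{s}_i$, and then to transport that identity through the defining condition of the $h$ distances using the uniqueness of the torus factor in the refined Bruhat decomposition $G=\bigsqcup_{w}N\overline{w}TN=\bigsqcup_w N\doverline{w}TN$. First I would record the matrix computation
\[
\begin{pmatrix} 0 & -1 \\ 1 & 0\end{pmatrix}=\begin{pmatrix} 0 & 1 \\ -1 & 0\end{pmatrix}\begin{pmatrix} -1 & 0 \\ 0 & -1\end{pmatrix},
\]
which upon applying the homomorphism $\varphi_i$ and using $(-1)^{\alpha_i^\vee}=\varphi_i\left(\begin{smallmatrix} -1 & 0 \\ 0 & -1\end{smallmatrix}\right)$ yields $\overline{s}_i=\doverline{s}_i\,(-1)^{\alpha_i^\vee}$.

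Next I would check that the torus element $(-1)^{\alpha_i^\vee}$ commutes with $\doverline{s}_i$, so that its placement on either side of the lift is immaterial. This follows because conjugation by $\doverline{s}_i$ acts on $T$ through the Weyl reflection $s_i$, and $s_i(\alpha_i^\vee)=-\alpha_i^\vee$; since $(-1)^{\alpha_i^\vee}$ is its own inverse, it is fixed by $s_i$ and hence commutes with $\doverline{s}_i$. (Equivalently this is the direct $\SL_2$ computation $\left(\begin{smallmatrix} -1 & 0 \\ 0 & -1\end{smallmatrix}\right)\left(\begin{smallmatrix} 0 & 1 \\ -1 & 0\end{smallmatrix}\right)=\left(\begin{smallmatrix} 0 & 1 \\ -1 & 0\end{smallmatrix}\right)\left(\begin{smallmatrix} -1 & 0 \\ 0 & -1\end{smallmatrix}\right)$.) Consequently $\overline{s}_i=(-1)^{\alpha_i^\vee}\doverline{s}_i=\doverline{s}_i(-1)^{\alpha_i^\vee}$, and since $(-1)^{\alpha_i^\vee}$ is an involution the same relation reads $\doverline{s}_i=(-1)^{\alpha_i^\vee}\overline{s}_i$.

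Finally, writing $h_+:=h_+(xN,yN)$, the hypothesis $x^{-1}y\in Bs_iB$ gives $x^{-1}y\in N\overline{s}_ih_+N$. Substituting $\overline{s}_i=\doverline{s}_i(-1)^{\alpha_i^\vee}$ and absorbing the torus factor into $h_+$ produces $x^{-1}y\in N\doverline{s}_i\big((-1)^{\alpha_i^\vee}h_+\big)N$, leaving the leftmost $N$ untouched; the uniqueness of the torus factor in $G=\bigsqcup_w N\doverline{w}TN$ then forces $h_-(xN,yN)=(-1)^{\alpha_i^\vee}h_+(xN,yN)$. The reversed identity $h_+=(-1)^{\alpha_i^\vee}h_-$ follows symmetrically, starting from $x^{-1}y\in N\doverline{s}_ih_-N$ and substituting $\doverline{s}_i=(-1)^{\alpha_i^\vee}\overline{s}_i$, so both cases of $h_\pm=(-1)^{\alpha_i^\vee}h_\mp$ are obtained. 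I expect the only genuine subtlety to be the bookkeeping of conventions, in particular the direction of the Weyl action of $\doverline{s}_i$ on $T$; but this is neutralized by the fact that $(-1)^{\alpha_i^\vee}$ is $s_i$-fixed, so the argument is insensitive to that choice.
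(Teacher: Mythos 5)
Your proposal is correct and takes essentially the same route as the paper, whose entire proof is the one-line observation that $\overline{s}_i=\doverline{s}_i\cdot(-1)^{\alpha_i^\vee}$. You have simply filled in the details the paper leaves implicit: the $\SL_2$ matrix verification of that identity, the centrality of $(-1)^{\alpha_i^\vee}$ in the image of $\varphi_i$, and the appeal to uniqueness of the torus factor in the refined Bruhat decomposition.
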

\begin{proof} It follows from the fact that $\overline{s}_i=\doverline{s}_i\cdot (-1)^{\alpha_i^\vee}$.
\end{proof}

Recall from \cite{CZ,CW, CGGLSS} that for a weave $\ww$ on a disk, the \emph{flag moduli space} $\cM(\ww)$ is the moduli space of flag configurations on $\ww$, i.e., we associate a flag (i.e., an element of $G/B$) for each face of $\ww$, such that if two faces are separated by a weave edge of color $s_i$, their associated flags are in relative position $s_i$; then we quotient these flag data by the global action of $G$. 

\begin{defn} \label{defn: framed moduli space} With a choice of orientation on each external weave edge of $\ww$, we can further define a \emph{framed flag moduli space} $\cM_\fr(\ww)$: in addition to associating a flag with each face of $\ww$, we also associate a decorated flag (i.e., an element in $G/N$) with each intersection of a face $F$ and a boundary interval $C$, such that the following conditions are satisfied:
\begin{itemize}
    \item if $F\cap C$ is associated with the decorated flag $xN$, then the flag at $F$ is $xB$;
    \item if $F$ and $F'$ are two boundary faces separated by a weave edge of color $s_i$ that is a source (resp. sink) (left (resp. right) picture in Figure \ref{fig: framed moduli space}), and $xN$ and $x'N$ are the decorated flags associated with the boundaries of $F$ and $F'$, respectively, then $x^{-1}x'\in N\overline{s}_iN$ (resp. $x^{-1}x'\in N\doverline{s}_iN$).
\end{itemize}
\begin{figure}[H]
    \centering
    \begin{tikzpicture}[scale=0.7]
        \draw [very thick] (-2,0) -- (2,0);
        \draw [blue, decoration={markings, mark=at position 0.5 with {\arrow{>}}}, postaction={decorate}] (0,0) -- (0,-2) node [below] {$s_i$};
        \node at(1,0) [above] {$x'N$};
        \node at (-1,0) [above] {$xN$};
        \node at (1,-1) [] {$x'B$};
        \node at (-1,-1) [] {$xB$};
    \end{tikzpicture}\hspace{2cm}
    \begin{tikzpicture}[scale=0.7]
        \draw [very thick] (-2,0) -- (2,0);
        \draw [blue, decoration={markings, mark=at position 0.5 with {\arrow{<}}}, postaction={decorate}] (0,0) -- (0,-2) node [below] {$s_i$};
        \node at(1,0) [above] {$x'N$};
        \node at (-1,0) [above] {$xN$};
        \node at (1,-1) [] {$x'B$};
        \node at (-1,-1) [] {$xB$};
    \end{tikzpicture}
    \caption{Decorated flags along the boundary of a weave.}
    \label{fig: framed moduli space}
\end{figure}
In particular, if a boundary face $F$ contains a boundary base point, then the two decorated flags associated with the two adjacent boundary intervals must share the same underlying undecorated flag. Similar to $\cM(\ww)$, we also need to quotient out by the global action of $G$. Note that there is a natural forgetful map 
\[
\cM_\fr(\ww)\longrightarrow \cM(\ww).
\]
\end{defn}

\begin{rmk} In general, $\cM(\ww)$ and $\cM_\fr(\ww)$ are Artin stacks.\end{rmk}

\section{Weighted Cycles and Their Merodromies}\label{sec3}

\subsection{Definition of Weighted Cycles} 
The chains and cycles in this article should be understood as $1$-chains and $1$-cycles; since we will not discuss any chains or cycles of other dimensions, for simplicity, we omit the prefix $1$.

\begin{defn} A \emph{weighted chain} on a weave $\ww$ is an oriented path $\eta$ on the plane that avoids all boundary base points, with no self-intersection, and intersecting $\ww$ transversely, together with the assignment of a weight to each connected component in $\eta\setminus\ww$, such that the two weights adjacent to any edge colored by $s_i$ are of the form $\mu$ and $s_i.\mu$. We also adopt the convention that a weighted chain with a $0$ weight is the same as removing that weighted chain.
\end{defn}

\begin{defn} We also consider weighted chains up to \emph{homotopies}, which is a combination of local path homotopies and the following list of moves (see also Figure \ref{fig: homotopies}):
\begin{enumerate}
    \item[(0)] Pulling or pushing a U-turn through a weave edge.
    \item[(1)] Reversing the orientation and changing the weights to their opposite.
    \item[(2)] Trivial if it does not intersect any weave edges and homotopic to a small loop or a boundary interval.
    \item[(3)] Homotoping through a bivalent weave vertex up to a sign.
    \item[(4)] Homotoping through a trivalent weave vertex of color $s_i$ if $s_i.\mu=\mu$.
    \item[(5)] Homotoping through a tetravalent weave vertex.
    \item[(6)] Homotoping through a hexavalent weave vertex.
\end{enumerate}

\begin{figure}[H]
    \centering
    \begin{tabular}{cccccc}\hline
    \multicolumn{3}{|c|}{
    \begin{tikzpicture}[baseline=0]
    \draw [blue] (0,-1) node [below] {$s_i$} -- (0,1);
    \draw [dashed, ->] (-1,-0.7) node [below] {$\mu$} to [out=0,in=-90] (1,0) node [right] {$s_i.\mu$} to [out=90,in=0] (-1,0.7) node [above] {$\mu$};
    \end{tikzpicture} \quad = \quad \begin{tikzpicture}[baseline = 0]
    \draw [blue] (0,-1) node [below] {$s_i$} -- (0,1);
    \draw [dashed, ->] (-1,-0.7) -- node [left] {$\mu$} (-1,0.7);   
    \node at (1,0) [] {\quad};
    \end{tikzpicture}}&
    \multicolumn{3}{c|}{
    \begin{tikzpicture}[baseline=0]
    \draw[blue] (0,-1) node [below] {$s_i$} -- (0,1);
    \draw [dashed,->] (-1,0) node [above] {$\mu$} -- (1,0) node [above] {$s_i.\mu$};
    \end{tikzpicture} \quad =\quad \begin{tikzpicture}[baseline=0]
    \draw[blue] (0,-1) node [below] {$s_i$} -- (0,1);
    \draw [dashed,<-] (-1,0) node [above] {$-\mu$} -- (1,0) node [above] {$-s_i.\mu$};
    \end{tikzpicture}} \\
    \multicolumn{3}{|c|}{(0)} & \multicolumn{3}{c|}{(1)}  \\[5pt] \hline
    \multicolumn{3}{|c|}{
 \begin{tikzpicture}[baseline=0]
        \draw [dashed,decoration={markings, mark=at position 0.5 with {\arrow{<}}}, postaction={decorate}] (0,0) circle [radius=0.7];
        \node at (0.7,0) [right] {$\mu$};
    \end{tikzpicture}\ $=$ \ \begin{tikzpicture}[baseline=0]
        \draw [very thick] (-1,0.25) -- (1,0.25);
        \draw [dashed,decoration={markings, mark=at position 0.5 with {\arrow{<}}}, postaction={decorate}] (-0.5,0.25) arc (-180:0:0.5);
        \node at (0,-0.25) [below] {$\mu$};
    \end{tikzpicture} \ $=1$} &
    \multicolumn{3}{c|}{
    \begin{tikzpicture}[baseline=0]
    \draw [blue] (0,-1) node [below] {$s_i$} -- (0,1) node [above] {$s_i$};
    \node [blue] at (0,0) [] {$\bullet$};
    \draw [dashed,decoration={markings, mark=at position 0.5 with {\arrow{<}}}, postaction={decorate}] (0,0) circle [radius=0.7];
    \node at (0.7,0) [right] {$\mu \ = (-1)^{\inprod{\alpha_i^\vee}{\mu}}$};
    \node at (-0.7,0) [left] {$s_i.\mu$};
    \end{tikzpicture}
    } \\ 
     \multicolumn{3}{|c|}{(2)} & \multicolumn{3}{c|}{(3)} \\[5pt] \hline
    \multicolumn{2}{|c|}{
    \begin{tikzpicture}[baseline=0] 
    \foreach \i in {0,1,2} 
    {
        \draw [blue] (0,0) -- (-90+\i*120:1);
        \node [blue] at (-90+\i*120:1.2) [] {$s_i$};
    }
    \draw [dashed,decoration={markings, mark=at position 0.5 with {\arrow{<}}}, postaction={decorate}] (0,0) circle [radius=0.7];
    \node at (0.7,0) [right] {$\mu$};
    \end{tikzpicture}\hspace{-1cm} \begin{tikzpicture}[baseline=0]
        \node at (0,0) [] {$=1$};
        \node at (-0.5,-1.7) [] {if $s_i.\mu=\mu$};
    \end{tikzpicture}} 
   & \multicolumn{2}{c|}{
    \begin{tikzpicture}[baseline=0,scale=0.9]
    \draw [blue] (-0.9,-0.9) node [below left] {$s_i$} -- (0.9,0.9) node [above right] {$s_i$};
    \draw [teal] (-0.9,0.9) node [above left] {$s_j$} -- (0.9,-0.9) node [below right] {$s_j$};
    \draw [dashed,decoration={markings, mark=at position 0.5 with {\arrow{<}}}, postaction={decorate}] (0,0) circle [radius=0.7];
    \node at (0.7,0) [right] {$\mu$};
    \node at (0,0.7) [above] {$s_i.\mu$};
    \node at (-0.7,0) [left] {$s_js_i.\mu$};
    \node at (0,-0.7) [below] {$s_j.\mu$};
    \end{tikzpicture}\hspace{-1.5cm}
    \begin{tikzpicture}[baseline=0]
        \node at (0,0) [] {$=1$};
        \node at (-0.5,-1.7) [] {if $s_is_j=s_js_i$};
    \end{tikzpicture}} & 
    \multicolumn{2}{c|}{
    \begin{tikzpicture}[baseline=0]
    \foreach \i in {0,1,2}
    {
        \draw [blue] (0,0) -- (-90+\i*120:1);
        \node [blue] at (-90+\i*120:1.2) [] {$s_i$};
        \draw [red] (0,0) -- (90 +\i*120:1);
        \node [red] at (90+\i*120:1.2) [] {$s_j$};
    }
    \draw [dashed,decoration={markings, mark=at position 0.5 with {\arrow{<}}}, postaction={decorate}] (0,0) circle [radius=0.7];
    \node at (1,0) [] {$\mu$};
    \node at (60:1) [] {$s_i.\mu$};
    \node at (120:1) [] {$s_js_i.\mu$};
    \node at (-1.5,0) [] {$s_is_js_i.\mu$};
    \node at (-60:1) [] {$s_j.\mu$};
    \node at (-120:1) [] {$s_is_j.\mu$};
    \end{tikzpicture}\hspace{-2cm}
    \begin{tikzpicture}[baseline=0]
        \node at (0,0) [] {$=1$};
    \node at (-1,-1.7) [] {if $s_is_js_i=s_js_is_j$};
    \end{tikzpicture}} \\ 
    \multicolumn{2}{|c|}{(4)} & \multicolumn{2}{c|}{(5)} & \multicolumn{2}{c|}{(6)}  \\[5pt] \hline
    \end{tabular}
    \caption{Homotopies in the simply-laced case. }
    \label{fig: homotopies}
\end{figure}
\end{defn}

\begin{rmk} Note that in these homotopy moves, trivial homotopy is set to $1$ instead of $0$; this is because we are treating the weighted cycles multiplicatively instead of additively. This is particularly needed to make sense of the sign in homotopy move (3). Also, please note that there is no ambiguity in the sign factor in homotopy move (3) since $\inprod{\alpha_i^\vee}{s_i.\mu}=\inprod{s_i.\alpha_i^\vee}{\mu}=-\inprod{\alpha_i^\vee}{\mu}$.
\end{rmk}

\begin{defn} A \emph{weighted (relative) cycle} on a weave $\ww$ is a collection of non-intersecting weighted chains with endpoints that are one of the two cases below:
\begin{itemize}
    \item on the boundary of the disk away from the boundary base points;
    \item a generic interior point of the disk such that, after possibly reversing orientations (homotopy move (1)) to make it a sink for all incident weighted chains, the sum of all nearby weights is $0$. We call this the \emph{balancing condition} on the interior endpoint (Figure \ref{fig: interior endpoint}).
\end{itemize}
A weighted cycle is called a \emph{weighted absolute cycle} if the weighted cycle homotopy class has a representative with no endpoints on the boundary of the disk.
\end{defn}

\begin{figure}[H]
    \centering
    \begin{tikzpicture}[baseline=0]
    \foreach \i in {1,2,3,4}
    { \draw [dashed, decoration={markings, mark=at position 0.5 with {\arrow{>}}}, postaction={decorate}] (135-90*\i:1) -- (0,0);
    \node at (135-90*\i:1.2) [] {$\mu_\i$};
    }
    \node at (0,0) [] {$\bullet$};
    \node at (0,-1.5) [] {$\mu_1+\mu_2+\mu_3+\mu_4=0$};
    \end{tikzpicture}
    \caption{An interior point where multiple weighted chains end.}
    \label{fig: interior endpoint}
\end{figure}

\begin{figure}[H]
    \centering
    \begin{tikzpicture}
        \draw [very thick] (0,0) circle [radius=2];
        \node at (2,0) [] {$\bullet$};
        \draw [blue] (45:2) -- (0.75,0) -- (-45:2);
        \draw [blue] (135:2) -- (-0.75,0) -- (-135:2);
        \draw [blue] (-0.75,0) -- (0.75,0);
        \draw [dashed,decoration={markings, mark=at position 0.5 with {\arrow{>}}}, postaction={decorate}] (0,2) -- (0,-1);
        \draw [dashed,decoration={markings, mark=at position 0.5 with {\arrow{>}}}, postaction={decorate}] (0,-2) -- (0,-1);
        \node at (0,-1) [] {$\bullet$};
        \draw [dashed,decoration={markings, mark=at position 0.5 with {\arrow{<}}}, postaction={decorate}] (20:2) to [bend left] (0,-1);
        \node at (0,1) [left] {$\omega_1$};
        \node at (0,-0.5) [left] {$-\omega_1$};
        \node at (0,-1.5) [left] {$2\omega_1$};
        \node at (0.8,-1) [] {$\omega_1$};
        \node at (1.25,0) [above] {$-\omega_1$};
    \end{tikzpicture} \hspace{2cm}
    \begin{tikzpicture}
        \draw [very thick] (0,0) circle [radius=2];
        \node at (2,0) [] {$\bullet$};
        \draw [blue] (45:2) -- (0.75,0) -- (-45:2);
        \draw [blue] (135:2) -- (-0.75,0) -- (-135:2);
        \draw [blue] (-0.75,0) -- (0.75,0);
        \draw [dashed,decoration={markings, mark=at position 0.3 with {\arrow{<}}}, postaction={decorate}] (0.8,-0.25) arc (-90:90:0.25) -- (-0.8,0.25) arc (90:270:0.25) -- (0.8,-0.25);
        \node at (0,0.25) [above] {$\omega_1$};
        \node at (0,-0.25) [below] {$\omega_1$};
        \node at (-1,0) [left] {$-\omega_1$};
        \node at (1,0) [right] {$-\omega_1$};
         \end{tikzpicture}
    \caption{Left: a weighted cycle on an $\SL_2$-weave. Right: a weighted absolute cycle on an $\SL_2$-weave.}
\end{figure}

Recall that the boundary of the disk is decorated with base points, which cut the boundary of the disk into boundary intervals. In addition to path homotopies, we also allow
\begin{enumerate}
    \item[(7)] homotopies that move boundary endpoints of weighted cycles along boundary intervals. 
\end{enumerate} 
We also add the following additional homotopy moves for interior endpoints of weighted chains.
\begin{enumerate}
    \item[(8)] Adding/removing an interior endpoint.
    \item[(9)] Pushing an interior endpoint off the boundary of the disk. 
    \item[(10)] Expand/contract adjacent interior endpoints.
    \item[(11)] Split/combine weighted chains between interior endpoints.
    \item[(12)] Moving an interior endpoint of weighted chains through a weave edge.
    
\end{enumerate}
\begin{figure}[H]
    \centering
    \begin{tabular}{|c|c|}\hline
    \begin{tikzpicture}[baseline=0]
        \draw [very thick] (-1.5,1) -- (1.5,1);
        \draw [blue] (0,-1) -- (0,1) node [above] {$s_i$};
        \draw [dashed, ->] (0.8,1) -- node [left]{$\mu$} (0.8,-1);
    \end{tikzpicture}=\begin{tikzpicture}[baseline=0]
        \draw [very thick] (-1.5,1) -- (1.5,1);
        \draw [blue] (0,-1) -- (0,1) node [above] {$s_i$};
        \draw [dashed, ->] (-0.8,1)  to [out=-90,in=90] (0.8,-1);
        \node at (-0.8,0.1) []{$s_i.\mu$};
        \node at (0.8,-0.1) [] {$\mu$};
    \end{tikzpicture}  &
    \begin{tikzpicture}[baseline=0]
        \draw[dashed,decoration={markings, mark=at position 0.5 with {\arrow{>}}}, postaction={decorate}] (0,-1) -- node [right] {$\mu$} (0,1);
    \end{tikzpicture}\quad $=$ \quad 
    \begin{tikzpicture}[baseline=0]
        \draw[dashed,decoration={markings, mark=at position 0.5 with {\arrow{>}}}, postaction={decorate}] (0,-1) node [above left] {$\mu$} -- (0,0);
        \draw [dashed, decoration={markings, mark=at position 0.5 with {\arrow{>}}}, postaction={decorate}](0,0) -- (0,1) node [below left] {$\mu$} ;
        \node at (0,0) [] {$\bullet$};
    \end{tikzpicture} \\
    (7) & (8) \\[5pt] \hline
    \begin{tikzpicture}[baseline=0,scale=0.8]
        \foreach \i in {0,...,3}
        {
        \draw [dashed, decoration={markings, mark=at position 0.5 with {\arrow{>}}}, postaction={decorate}] (45+90*\i:1.5) -- (0,0);
        }
        \node at (135:1.5) [above left] {$\mu_1$};
        \node at (-135:1.5) [below left] {$\mu_k$};
        \node at (45:1.5) [above left] {$\mu_m$};
        \node at (-45:1.5) [below left] {$\mu_{k+1}$};
        \node at (-1,0) [] {$\vdots$};
        \node at (0.5,0) [] {$\vdots$};
        \node at (0,0) [] {$\bullet$};
        \draw [very thick] (1.1,-1.6) -- (1.1,1.6);
    \end{tikzpicture} \quad  = \quad 
    \begin{tikzpicture}[baseline=0,scale=0.8]
        \draw [very thick] (1,-1.6) -- (1,1.6);
        \draw [dashed, decoration={markings, mark=at position 0.5 with {\arrow{>}}}, postaction={decorate}] (-1.5,0.7) node [below right] {$\mu_1$ }-- (1,0.7);
        \draw [dashed, decoration={markings, mark=at position 0.5 with {\arrow{>}}}, postaction={decorate}] (-1.5,-0.7) node [below right] {$\mu_k$ }-- (1,-0.7);
        \node at (0,0) [] {$\vdots$};
    \end{tikzpicture} &
    \begin{tikzpicture}[baseline=0,scale=0.7]
        \foreach \i in {0,...,3}
        {
        \draw [dashed, decoration={markings, mark=at position 0.5 with {\arrow{>}}}, postaction={decorate}] (45+90*\i:1.5) -- (0,0);
        }
        \node at (135:1.5) [above left] {$\mu_1$};
        \node at (-135:1.5) [below left] {$\mu_k$};
        \node at (45:1.5) [above right] {$\mu_m$};
        \node at (-45:1.5) [below right] {$\mu_{k+1}$};
        \node at (-1,0) [] {$\vdots$};
        \node at (1,0) [] {$\vdots$};
        \node at (0,0) [] {$\bullet$};
    \end{tikzpicture} =\hspace{-2cm}
    \begin{tikzpicture}[baseline=0,scale=0.7]
        \foreach \i in {-1,1}
        {
        \draw [dashed, decoration={markings, mark=at position 0.5 with {\arrow{>}}}, postaction={decorate}] (135*\i:1.5) -- (-0.5,0);
        \draw [dashed, decoration={markings, mark=at position 0.5 with {\arrow{>}}}, postaction={decorate}] (45*\i:1.5) -- (0.5,0);
        \node at (\i*0.5,0) [] {$\bullet$};
        }
        \draw [dashed, decoration={markings, mark=at position 0.5 with {\arrow{>}}}, postaction={decorate}] (-0.5,0) -- node [above] {$\nu$} (0.5,0);
        \node at (135:1.5) [above left] {$\mu_1$};
        \node at (-135:1.5) [below left] {$\mu_k$};
        \node at (45:1.5) [above right] {$\mu_m$};
        \node at (-45:1.5) [below right] {$\mu_{k+1}$};
        \node at (-1,0) [] {$\vdots$};
        \node at (1,0) [] {$\vdots$};
        \node at (-1,-2) [] {$\nu=\mu_1+\cdots +\mu_k-\mu_{k+1}-\cdots-\mu_m$};
    \end{tikzpicture}\\
    (9) &  (10) \\[5pt] \hline
    \begin{tikzpicture}[baseline=0]
    \draw [dashed, decoration={markings, mark=at position 0.5 with {\arrow{>}}}, postaction={decorate}] (0,-1) -- node [left] {$\mu_1+\mu_2$}(0,1);
    \node at (0,-1) [] {$\bullet$};
    \node at (0,1) [] {$\bullet$};
    \end{tikzpicture} \quad =\quad 
    \begin{tikzpicture}[baseline=0]
        \draw [dashed, decoration={markings, mark=at position 0.5 with {\arrow{>}}}, postaction={decorate}] (0,-1) to [bend left] node [left] {$\mu_1$} (0,1);
        \draw [dashed, decoration={markings, mark=at position 0.5 with {\arrow{>}}}, postaction={decorate}] (0,-1) to [bend right] node [right] {$\mu_2$} (0,1);
        \node at (0,-1) [] {$\bullet$};
    \node at (0,1) [] {$\bullet$};
    \end{tikzpicture} &
    \begin{tikzpicture}[baseline=0,scale=0.8]
        \foreach \i in {1,2,3}
        {
        \draw [dashed, decoration={markings, mark=at position 0.5 with {\arrow{>}}}, postaction={decorate}] (-60-120*\i:1.5) -- (0,0);
        \node at (30-15*\i:0.7) [] {$\cdot$};
        }
        \node at (-1.7,0) [left] {$s_i.\mu_1$};
        \node at (-0.5,0) [below] {$\mu_1$};
        \node at (60:1.7) [] {$\mu_2$};
        \node at (-60:1.7) [] {$\mu_k$};
        \draw [blue] (-1,-1.2) node [below] {$s_i$} -- (-1,1.2);
        \node at (0,0) [] {$\bullet$};
    \end{tikzpicture}\quad = \quad \begin{tikzpicture}[baseline=0,scale=0.8]
        \foreach \i in {1,2,3}
        {
        \draw [dashed, decoration={markings, mark=at position 0.5 with {\arrow{>}}}, postaction={decorate}] (-60-120*\i:1.5) -- (-0.7,0);
        \node at (30-15*\i:0.7) [] {$\cdot$};
        }
        \node at (-1.7,0) [left] {$s_i.\mu_1$};
        \node at (60:1.7) [] {$\mu_2$};
        \node at (-60:1.7) [] {$\mu_k$};
        \draw [blue] (0.2,-1.2) node [below] {$s_i$} -- (0.2,1.2);
        \node at (0,0.7) [above left] {$s_i.\mu_2$};
        \node at (0,-0.7) [below left] {$s_i.\mu_k$};
        \node at (-0.7,0) [] {$\bullet$};
    \end{tikzpicture} \\ 
    (11)  & (12) \\[5pt] \hline\end{tabular}
    \caption{Homotopies involving interior endpoints.}
    \label{fig: interior endpoint through a weave edge}
\end{figure}

\begin{rmk} There is a special case of homotopy move (10), where the intermediate weight $\nu=0$; in this case, we delete the weighted chain in the middle and the move will effectively split the interior endpoint in two. 
\end{rmk}

\subsection{Intersection Pairing between Weighted Cycles} In this subsection we define an intersection number between weighted cycles. 

\begin{defn} When two weighted chains $\eta_1$ and $\eta_2$ intersect at $p$, we define the \emph{sign} of the intersection $p$, $\sgn_p(\eta_1,\eta_2)$, as in Figure \ref{fig: weighted chain intersection sign}. If $\eta_1$ and $\eta_2$ do not intersect, we define $\sgn_p(\eta_1,\eta_2)=0$.
    \begin{figure}[H]
    \centering
    \begin{tikzpicture}[baseline=0]
    \draw[dashed,<-] (0,-1) node [below left] {$\eta_2$} -- (0,1);
    \draw [dashed,->] (-1,0) node [below left] {$\eta_1$} -- (1,0);
    \node at (0,-2) [] {$\sgn_p(\eta_1,\eta_2)=1$};
    \end{tikzpicture} \hspace{2cm}     \begin{tikzpicture}[baseline=0]
    \draw[dashed,->] (0,-1) node [below left] {$\eta_2$}-- (0,1);
    \draw [dashed,->] (-1,0) node [below left] {$\eta_1$} -- (1,0);
    \node at (0,-2) [] {$\sgn_p(\eta_1,\eta_2)=-1$};
    \end{tikzpicture} 
    \caption{Signs of the intersection between two weighted chains.}
    \label{fig: weighted chain intersection sign}
\end{figure}
Moreover, if $\eta_1$ and $\eta_2$ have weights $\mu_1$ and $\mu_2$ at $p$, we define the \emph{local intersection number} between $\eta_1$ and $\eta_2$ at $p$ to be
\[
\{\eta_1,\eta_2\}_p:=\sgn_p(\eta_1,\eta_2)\cdot (\mu_1,\mu_2),
\]
where $(\mu_1,\mu_2)$ is the inner product between weights.
\end{defn}

\begin{defn}\label{defn: pairing along boundary interval} For a boundary endpoint $p$ of a weighted chain, we define $\sgn(p)=-1$ if it is a source and $\sgn(p)=1$ if it is a sink. 
For a boundary interval $C$ and two boundary endpoints $p_1$ and $p_2$ of weighted chains, we define $\sgn_C(p_1,p_2)=1$ if $p_1$ precedes $p_2$ in the clockwise direction, $\sgn_C(p_1,p_2)=-1$ if $p_1$ precedes $p_2$ in the counterclockwise direction, and $\sgn_C(p_1,p_2)=0$ if at least one of $p_1$ and $p_2$ is not on $C$. Lastly, suppose $\mu_1$ and $\mu_2$ are weights at the boundary endpoints $p_1$ and $p_2$ on the same boundary interval $C$, separated by external weave edges of colors $s_{i_1}, s_{i_2},\dots, s_{i_k}$ in the clockwise direction (as in Figure \ref{fig: boundary intersection pairing}); we define the inner product between $p_1$ and $p_2$ to be
\[
I_C(\mu_1,\mu_2):=\left\{\begin{array}{ll}
    (\mu_1, s_{i_1}\cdots s_{i_k}.\mu_2) & \text{if $\mu_2$ precedes $\mu_1$ in the clockwise direction,} \\
    (\mu_2,s_{i_1}\cdots s_{i_k}.\mu_1) & \text{if $\mu_1$ precedes $\mu_2$ in the clockwise direction.} 
\end{array}\right.
\]
\begin{figure}[H]
    \centering
    \begin{tikzpicture}
        \draw [very thick] (-2,0) --(2,0) node [right] {$C$};
        \draw [dashed] (-1.5,0) node [above] {$p_1$} -- node [left] {$\mu_1$}(-1.5,-1.5) ;
        \draw [dashed] (1.5,0) node [above] {$p_2$} -- node [right] {$\mu_2$}(1.5,-1.5) ;
        \draw [blue] (-0.5,0) -- (-0.5,-1.5) node [below] {$s_{i_1}$};
        \draw [red] (0.5,0) -- (0.5,-1.5) node [below] {$s_{i_k}$};
        \node at (0,-0.8)[ ]{$\cdots$};
    \end{tikzpicture}
    \caption{Inner product between boundary endpoints.}
    \label{fig: boundary intersection pairing}
\end{figure}
We define the \emph{local intersection number} between weighted chains $\eta_1$ and $\eta_2$ at a boundary interval $C$ to be
\[
\{\eta_1,\eta_2\}_C:=\frac{1}{2}\sum_{p_1\in \partial \eta_1}\sum_{p_2\in \partial \eta_2} \sgn(p_1)\sgn(p_2)\sgn_C(p_1,p_2)I_C(\mu_1,\mu_2).
\]
\end{defn}

\begin{defn}\label{defn: total intersection number between weighted cycles} We define the \emph{(total) intersection number} between two weighted chains $\eta_1$ and $\eta_2$ to be
\[
\{\eta_1,\eta_2\}:=\sum_{p\in \eta_1\cap \eta_2}\{\eta_1,\eta_2\}_p+\sum_C \{\eta_1,\eta_2\}_C.
\]
It is not hard to see that $\{\cdot, \cdot\}$ is skew-symmetric. We extend this intersection number linearly to an intersection number between weighted cycles.
\end{defn}

\begin{prop} The intersection number between weighted cycles is invariant under homotopies.
\end{prop}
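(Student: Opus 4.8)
The plan is to reduce the global statement to a finite list of purely local verifications, one for each generating homotopy move. Since the total intersection number $\{\cdot,\cdot\}$ is assembled from local contributions --- the crossing terms $\{\eta_1,\eta_2\}_p$ and the boundary terms $\{\eta_1,\eta_2\}_C$ --- and is bilinear, I would first use bilinearity and skew-symmetry to reduce to a pair of individual weighted chains $\eta_1,\eta_2$, and further to homotopies applied to $\eta_1$ alone with $\eta_2$ held fixed. Because every move in the list is supported in a small disk, only the finitely many crossing terms inside that disk, together with at most one boundary term, can change; everything else is literally unchanged. It therefore suffices to show that the sum of the affected local terms agrees before and after. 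The single algebraic fact doing all the work is that each simple reflection acts on $X^*(T)$ as an isometry: from $s_i.\mu=\mu-(\alpha_i,\mu)\alpha_i$ and $(\alpha_i,\alpha_i)=C_{ii}=2$ one computes $(s_i.\mu,s_i.\nu)=(\mu,\nu)$, hence $(w.\mu,w.\nu)=(\mu,\nu)$ for all $w\in W$. Since the two weights flanking a weave edge of color $s_i$ differ by $s_i$, any configuration in which both chains cross the same weave edge leaves the weight pairing untouched.

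For the interior moves the verification is as follows. For a generic local path homotopy and for moves (1), (3), and the expand/contract and split/combine parts of (10)--(11), the two sides differ only by the creation or cancellation of transverse crossings: whenever a finger of $\eta_1$ is pushed across $\eta_2$ (or across a trivial loop/U-turn), the two new crossings lie on a common segment of each chain, so they carry identical weights and opposite signs $\sgn_p$, and by $\{\eta_1,\eta_2\}_p=\sgn_p(\eta_1,\eta_2)(\mu_1,\mu_2)$ their contributions cancel; one shrinks the finger to avoid weave edges, which is possible since the weave is a fixed graph. Move (2) is immediate: reversing the orientation of $\eta_1$ flips every $\sgn_p$ and every $\sgn(p)$ in which $\eta_1$ participates while simultaneously sending $\mu_1\mapsto -\mu_1$, leaving each bilinear term invariant. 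For the moves that push a strand or an interior endpoint across a weave edge --- move (1), move (12), move (4), and the reflected halves of the vertex moves (5)--(6) --- the crossings of $\eta_2$ with $\eta_1$ acquire reflected weights $s_i.\mu_1$, but the isometry $(s_i.\mu,s_i.\nu)=(\mu,\nu)$ restores the equality; the commutation and braid relations holding at tetravalent and hexavalent vertices (Figure \ref{fig: weave vertices}) guarantee that the composite reflection along the two homotopic paths through a vertex agree, so the move is well defined on weights and the pairing matches term by term. At an interior endpoint the balancing condition $\sum_i\mu_i=0$, together with bilinearity of $(\cdot,\cdot)$, handles moves (8)--(9) and (12): when $\eta_2$ is swept from one side of the endpoint to the other it exchanges which incident chains it crosses, and the net change in the crossing terms equals a multiple of $(\nu,\sum_i\mu_i)=0$.

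The delicate point, and the step I expect to be the main obstacle, is the boundary move (7) --- sliding a boundary endpoint of $\eta_1$ along a boundary interval $C$ --- together with the special cases of (8)--(9) that push an interior endpoint onto $C$. Here a single move changes both the interior crossing terms and the boundary term $\{\eta_1,\eta_2\}_C$, and invariance is a cancellation between the two. When the endpoint $p_1$ of $\eta_1$ is dragged past an endpoint $p_2$ of $\eta_2$, the cyclic order on $C$ reverses, flipping $\sgn_C(p_1,p_2)$ and changing the boundary term by $-\sgn(p_1)\sgn(p_2)I_C(\mu_1,\mu_2)$; at the same time the interior strand of $\eta_1$ sweeps across $\eta_2$, in the process crossing the external weave edges of colors $s_{i_1},\dots,s_{i_k}$ separating $p_1$ from $p_2$ and producing exactly one new interior crossing whose weight on the $\eta_1$ side is $s_{i_1}\cdots s_{i_k}.\mu_1$. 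The contribution of that crossing is, up to the source/sink signs, precisely $\sgn(p_1)\sgn(p_2)(s_{i_1}\cdots s_{i_k}.\mu_1,\mu_2)=\sgn(p_1)\sgn(p_2)I_C(\mu_1,\mu_2)$, which matches the definition of $I_C$ in Definition \ref{defn: pairing along boundary interval} and cancels the boundary change; the factor $\tfrac12$ and the symmetric appearance of both $I_C(\mu_1,\mu_2)$ and $I_C(\mu_2,\mu_1)$ in the double sum absorb the two orderings.

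The hard part will be the exhaustive sign audit: verifying in every case (source versus sink, and $p_1$ preceding or following $p_2$ on $C$) that the three sign functions $\sgn(p)$, $\sgn_C$, and $\sgn_p$ conspire with the reflection word appearing in $I_C$ so that this cancellation is exact, and confirming that the reflected weight produced by sweeping across $s_{i_1},\dots,s_{i_k}$ agrees on the nose with the Weyl-group element built into $I_C$. Once move (7) is settled in this way, the remaining boundary-flavored special cases follow by the same mechanism, and assembling the local checks over all generators yields homotopy invariance of $\{\cdot,\cdot\}$.
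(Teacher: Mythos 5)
Your proposal is correct and follows essentially the same route as the paper: the same reduction to a finite list of local checks (creation/cancellation of crossing pairs, orientation reversal, crossings pushed through weave edges via Weyl-invariance of $(\cdot,\cdot)$, interior endpoints via the balancing condition, and the cancellation between the boundary term and a new interior crossing for move (7)). The only difference is completeness: the ``sign audit'' you defer is exactly the short case-by-case computation the paper carries out, e.g.\ comparing $-\tfrac{1}{2}(\mu_1,\mu_2)$ with $\tfrac{1}{2}(\mu_1,\mu_2)-(\mu_1,\mu_2)$ when two boundary endpoints pass each other.
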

\begin{proof} Since the intersection number is defined locally and the intersections are assumed to be in generic position, we can always isolate the intersections between weighted cycles away from most of the homotopy moves on the list. Thus, it suffices to only consider the following. 
\begin{enumerate}[(i)]
    \item Path homotopies that create/cancel pairs of intersections between weighted cycles.
    \item Homotopy move (7) that moves boundary endpoints of weighted chains past each other.
    \item Homotopy move (7) that moves a boundary endpoint of a weighted cycle through an external weave edge.
    \item Homotopy move (1), which reverses the orientation of a weighted cycle and changes the weights to their opposite.
    \item Homotopy move (9), which pushes an interior endpoint off the boundary of the disk.
    \item Pushing/pulling an intersection between weighted cycles through a weave edge.
    \item Pushing/pulling intersections between weighted cycles through an interior endpoint.
\end{enumerate}  
\begin{figure}[H]
    \centering
    \begin{tabular}{ccccccc}\hline
    \multicolumn{2}{|c|}{
    \begin{tikzpicture}[baseline=0,scale=0.7]
        \draw [dashed] (-0.5,-1) node [below] {$\mu_1$} -- (-0.5,1);
        \draw [dashed] (0.5,-1) node [below] {$\mu_2$} -- (0.5,1);
    \end{tikzpicture} $\longleftrightarrow$ 
    \begin{tikzpicture}[baseline=0,scale=0.7]
        \draw [dashed] (-0.5,-1) node [below] {$\mu_1$} to [out=45, in=-90] (0.2,0) to [out=90,in=-45] (-0.5,1);
        \draw [dashed] (0.5,-1) node [below] {$\mu_2$} to [out=135,in=-90] (-0.2,0) to [out=90,in=-135] (0.5,1);
    \end{tikzpicture}}&
    \multicolumn{3}{c|}{
    \begin{tikzpicture}[baseline=0,scale=0.7]
    \draw [very thick] (-0.8,1) -- (0.8,1);
        \draw [dashed, decoration={markings, mark=at position 0.5 with {\arrow{<}}}, postaction={decorate}] (-0.5,-1) node [below] {$\mu_1$} -- (-0.5,1) node [above] {$p_1$};
        \draw [dashed, decoration={markings, mark=at position 0.5 with {\arrow{<}}}, postaction={decorate}] (0.5,-1) node [below] {$\mu_2$} -- (0.5,1) node [above] {$p_2$};
    \end{tikzpicture}$\longleftrightarrow$ 
    \begin{tikzpicture}[baseline=0,scale=0.7]
    \draw [very thick] (-0.8,1) -- (0.8,1);
        \draw [dashed, decoration={markings, mark=at position 0.7 with {\arrow{<}}}, postaction={decorate}] (-0.5,-1) node [below] {$\mu_1$} to [out=90, in=-90] (0.5,1) node [above] {$p_1$};
        \node at (0,0) [below] {$p$};
        \draw [dashed, decoration={markings, mark=at position 0.7 with {\arrow{<}}}, postaction={decorate}] (0.5,-1) node [below] {$\mu_2$} to [out=90,in=-90] (-0.5,1) node [above] {$p_2$};
    \end{tikzpicture}}&
    \multicolumn{2}{c|}{
    \begin{tikzpicture}[baseline=0,scale=0.7]
        \draw [very thick] (-1,1) -- (2,1);
        \draw [dashed] (-0.5,1) node [above] {$p_1$} -- node [left] {$\mu_1$}(-0.5,-1);
        \draw [blue] (0,1) -- (0,-1) node [below] {$s_{i_1}$};
        \draw [red] (1,1) -- (1,-1) node [below] {$s_{i_k}$};
        \draw [dashed] (1.5,1) node [above] {$p_2$} -- node [right] {$\mu_2$} (1.5,-1);
        \node at (0.5,0) [] {$\cdots$};
    \end{tikzpicture}\quad $\longleftrightarrow$\quad
    \begin{tikzpicture}[baseline=0,scale=0.7]
        \draw [very thick] (-1,1) -- (2,1);
        \draw [dashed] (0.5,1) node [above] {$p_1$} to [out=-90, in=90] (-0.5,0) node [left] {$\mu_1$} -- (-0.5,-1);
        \draw [blue] (0,1) -- (0,-1) node [below] {$s_{i_1}$};
        \draw [red] (1,1) -- (1,-1) node [below] {$s_{i_k}$};
        \draw [dashed] (1.5,1) node [above] {$p_2$} -- node [right] {$\mu_2$} (1.5,-1);
        \node at (0.5,0) [] {$\cdots$};
    \end{tikzpicture}} \\
    \multicolumn{2}{|c|}{(i)} & 
    \multicolumn{3}{c|}{(ii)} & 
    \multicolumn{2}{c|}{(iii)} \\[5pt]\hline
    \multicolumn{4}{|c|}{
    \begin{tikzpicture}[baseline=0,scale=0.8]
    \draw[dashed,<-] (0,-1) node [below] {$\mu_1$} -- (0,1);
    \draw [dashed,->] (-1,0) node [above] {$\mu_2$} -- (1,0);
    \end{tikzpicture} \quad $\longleftrightarrow$   \quad  \begin{tikzpicture}[baseline=0, scale=0.8]
    \draw[dashed,->] (0,-1) node [below] {$-\mu_1$}-- (0,1);
    \draw [dashed,->] (-1,0) node [above] {$\mu_2$} -- (1,0);
    \end{tikzpicture}} &
    \multicolumn{3}{c|}{
    \begin{tikzpicture}[baseline=0, scale=0.8]
    \draw [very thick] (-1,1) -- (3,1);
    \foreach \i in {0,...,3}
    {
        \draw [dashed, decoration={markings, mark=at position 0.5 with {\arrow{>}}}, postaction={decorate}] (45+90*\i:1.3) -- (0,0);
    }
    \node at (0,0) [] {$\bullet$};
    \node at (0,-0.5) [] {$\cdots$};
    \node at (0,0.5) [] {$\cdots$};
    \node at (-1,1) [above] {$p_1$};
    \node at (0,1) [above] {$\cdots$};
    \node at (1,1) [above] {$p_k$};
    \node at (-1,0.5) [] {$\mu_1$};
    \node at (1,0.5) [] {$\mu_k$};
    \node at (-1,-0.5) [] {$\mu_{k+1}$};
    \node at (1,-0.5) [] {$\mu_m$};
    \draw [dashed,->] (2,1) node [above] {$q$} -- node [right] {$\nu$} (2,-1);
    \end{tikzpicture}\quad $\longleftrightarrow$\quad\begin{tikzpicture}[baseline=0, scale=0.8]
    \draw [very thick] (-1,1) -- (3,1);
    \foreach \i in {-1,1}
    {
        \draw [dashed, decoration={markings, mark=at position 0.5 with {\arrow{>}}}, postaction={decorate}] (\i*0.8,-1) -- (\i*0.8,1);
    }
    \node at (0,0) [] {$\cdots$};
    \node at (-1,1) [above] {$p_{k+1}$};
    \node at (0,1) [above] {$\cdots$};
    \node at (1,1) [above] {$p_m$};
    \node at (-1.2,-0.5) [] {$\mu_{k+1}$};
    \node at (1.2,-0.5) [] {$\mu_m$};
    \draw [dashed,->] (2,1) node [above] {$q$} -- node [right] {$\nu$} (2,-1);
    \end{tikzpicture} } \\
    \multicolumn{4}{|c|}{(iv)} & 
    \multicolumn{3}{c|}{(v)} \\[5pt] \hline
    \multicolumn{4}{|c|}{
    \begin{tikzpicture}[baseline=0,scale=0.8]
        \draw[dashed] (-1,-1) node [below] {$\mu_1$} -- (1,1) node [above] {$s_i.\mu_1$};
        \draw [dashed] (-1,1) node [above] {$\mu_2$} -- (1,-1)node [below] {$s_i.\mu_2$};
        \draw [blue] (-0.5,-1.5) node [below] {$s_i$} -- (-0.5,1.5);
    \end{tikzpicture}$\longleftrightarrow$  \begin{tikzpicture}[baseline=0,scale=0.8]
        \draw[dashed] (-1,-1) node [below] {$\mu_1$} -- (1,1) node [above] {$s_i.\mu_1$};
        \draw [dashed] (-1,1) node [above] {$\mu_2$} -- (1,-1)node [below] {$s_i.\mu_2$};
        \draw [blue] (0.5,-1.5) node [below] {$s_i$} -- (0.5,1.5);
    \end{tikzpicture}}&
    \multicolumn{3}{c|}{
    \begin{tikzpicture}[baseline=0,scale=0.7]
        \foreach \i in {0,...,3}
        {
        \draw [dashed, decoration={markings, mark=at position 0.5 with {\arrow{>}}}, postaction={decorate}] (45+90*\i:1.5) -- (0,0);
        }
        \node at (135:1.5) [above left] {$\mu_1$};
        \node at (-135:1.5) [below left] {$\mu_k$};
        \node at (45:1.5) [above right] {$\mu_m$};
        \node at (-45:1.5) [below right] {$\mu_{k+1}$};
        \draw [dashed,->] (-0.4,-1.5) -- (-0.4,1.5) node [above] {$\nu$};
        \node at (-1,0) [] {$\vdots$};
        \node at (1,0) [] {$\vdots$};
        \node at (0,0) [] {$\bullet$};
\end{tikzpicture}$\longleftrightarrow$\begin{tikzpicture}[baseline=0,scale=0.7]
        \foreach \i in {0,...,3}
        {
        \draw [dashed, decoration={markings, mark=at position 0.5 with {\arrow{>}}}, postaction={decorate}] (45+90*\i:1.5) -- (0,0);
        }
        \node at (135:1.5) [above left] {$\mu_1$};
        \node at (-135:1.5) [below left] {$\mu_k$};
        \node at (45:1.5) [above right] {$\mu_m$};
        \node at (-45:1.5) [below right] {$\mu_{k+1}$};
        \draw [dashed,->] (0.4,-1.5) -- (0.4,1.5) node [above] {$\nu$};
        \node at (-1,0) [] {$\vdots$};
        \node at (1,0) [] {$\vdots$};
        \node at (0,0) [] {$\bullet$};
\end{tikzpicture}} \\
\multicolumn{4}{|c|}{(vi)} & \multicolumn{3}{c|}{(vii)} \\[5pt]\hline
\end{tabular}
    \caption{Invariance of intersection numbers under homotopies of weighted cycles.}
    \label{fig: invariant of intersection numbers between weighted cycles}
\end{figure}

For (i), regardless of the orientations on the weighted chains, the new pair of intersection points have opposite signs. Thus, their combined contribution is $(\mu_1,\mu_2)-(\mu_1,\mu_2)=0$.

For (ii), the LHS has a local contribution $\{p_1,p_2\}_C=-\frac{1}{2}(\mu_1,\mu_2)$, and the RHS has a local contribution $\{p_1,p_2\}_C+\{\eta_1,\eta_2\}_p=\frac{1}{2}(\mu_1,\mu_2)-(\mu_1,\mu_2)$, which is equal to the LHS.

For (iii), the inner product $I_C(p_1,p_2)$ changes from $(\mu_1,s_{i_1}\cdots s_{i_k}.\mu_2)$ to $(s_{i_1}.\mu_1,s_{i_2}\cdots s_{i_k}.\mu_2)$, which is equal to $(\mu_1,s_{i_1}\cdots s_{i_k}.\mu_2)$ by the invariance of inner product between weights under the Weyl group action. 

For (iv), the sign of the intersection flips, which cancels with the new minus sign on the weight, so the local contribution does not change.

For (v), the combined contribution $\sum_{i=1}^k\{q,p_i\}$ is equal to $\sum_{i=1}^k(\mu_i,\nu)=\left(\sum_{i=1}^k\mu_i,\nu\right)$, and the combined contribution $\sum_{i=k+1}^m\{q,p_i\}$ is equal to $-\sum_{i=k+1}^m(\mu_i,\nu)=\left(-\sum_{i=k+1}^m\mu_i,\nu\right)=\left(\sum_{i=1}^k\mu_i,\nu\right)$, so the combined contribution does not change.

For (vi), regardless of the orientations on the weighted chains, the sign of the intersection does not change, and the pairing between weights becomes $(s_i.\mu_1,s_i.\mu_2)$, which is equal to $(\mu_1,\mu_2)$ since the inner product between weights is invariant under the Weyl group action.

For (vii), since $\mu_{k+1}+\cdots +\mu_m=-\mu_1-\cdots-\mu_k$, we have $(\mu_1,\nu)+\cdots+(\mu_k,\nu)=-(\mu_{k+1},\nu)-\cdots -(\mu_m,\nu)$; but then the signs of intersections between $\mu_{k+1}, \dots, \mu_m$ and $\nu$ are opposite to those between $\mu_1, \dots, \mu_k$ and $\nu$. Thus the local contribution does not change either.
\end{proof}

\subsection{Weighted Cycle Algebra and Quantization}

\begin{defn} Given a weave $\ww$, we define the \emph{weighted (relative) cycle algebra} $\mathcal{W}(\ww)$ to be the commutative algebra over $\mathbb{C}$ whose elements are formal linear combinations of homotopy classes of weighted relative cycles on $\ww$, with a multiplication defined by first superimposing weighted cycles on top of each other generically and then replacing each crossing with an interior endpoint of weighted chains.
\begin{figure}[H]
    \centering
    \begin{tikzpicture}[baseline=0]
        \draw [dashed,->] (-1,-1) -- (1,1);
        \draw [dashed] (1,-1) -- (0.3,-0.3);
        \draw [dashed, ->] (-0.3,0.3) -- (-1,1);
        \node at (0.5,0.5) [below right] {$\mu_1$};
        \node at (-0.5,0.5) [below left] {$\mu_2$};
    \end{tikzpicture} \quad $=$ \quad 
    \begin{tikzpicture}[baseline=0]
        \draw [dashed,->] (-1,-1) -- (1,1) ;
        \draw [dashed, ->] (1,-1) -- (-1,1);
        \node at (0.5,0.5) [below right] {$\mu_1$};
        \node at (-0.5,0.5) [below left] {$\mu_2$};
        \node at (0,0) [] {$\bullet$};
    \end{tikzpicture}
    \caption{Multiplication in weighted cycle algebras.}
\end{figure}
\end{defn}

Note that the empty weighted chain (e.g., the $0$'s in Figure \ref{fig: homotopies}) is the multiplicative identity in $\mathcal{W}(\ww)$.

\begin{defn} Recall that a weighted cycle is absolute if there is a representative whose weighted chains do not end at the boundary of $\ww$. Since the product of two weighted absolute chains is still absolute, weighted absolute chains form a subalgebra inside $\cW(\ww)$, which we call the \emph{weighted absolute cycle algebra} and denote by $\cW_\abs(\ww)$
\end{defn}

\begin{thm}\label{thm: weighted cycle algebra is a torus} Suppose $\ww$ is a weave for a simply connected semisimple Lie group $G$ of rank $r$. Let $\beta$ be the number of boundary base points on the disk and let $\tau$ be the number of trivalent weave vertices in $\ww$. The weighted cycle algebra $\mathcal{W}(\ww)$ is a Laurent polynomial ring whose spectrum is an algebraic torus of dimension $\tau+r(\beta-1)$.
\end{thm}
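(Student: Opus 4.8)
The goal is to show that $\cW(\ww)$ is a Laurent polynomial ring of rank $\tau + r(\beta-1)$. The overall strategy is to produce an explicit basis for the underlying lattice of exponents: I would argue that every weighted cycle can be decomposed, up to homotopy, into a product of a standard generating set of weighted cycles, and that these generators are multiplicatively independent. Since the multiplication in $\cW(\ww)$ is defined by stacking and resolving crossings into interior endpoints (which records intersection data multiplicatively), the first key point is that each weighted cycle class is determined by a finite vector of integer ``coordinates,'' and that multiplication of weighted cycles adds these coordinate vectors. This would identify $\cW(\ww)$ with the group algebra $\bC[M]$ of a free abelian group $M$, which is precisely a Laurent polynomial ring, and the theorem reduces to computing $\rank M$.

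\emph{First}, I would set up the coordinates. Using the homotopy moves, particularly move (12) (pushing interior endpoints through weave edges), moves (8)--(11) (manipulating interior endpoints), and the U-turn and vertex-crossing moves, I would reduce an arbitrary weighted cycle to a canonical form. The natural invariants to extract are: at each trivalent vertex, the weight-jump data subject to the balancing/constraints imposed by the definition of weighted chains (the weights on adjacent faces differ by $s_i.\mu$ across an edge of color $s_i$), contributing the $\tau$ term; and the boundary data along the $\beta$ boundary intervals, contributing the $r(\beta-1)$ term. The factor $r$ should arise because a weight on a single face lives in the rank-$r$ weight lattice $X^*(T)$, and the factor $(\beta-1)$ rather than $\beta$ should come from the balancing condition (homotopy moves (3) and (8)) forcing one global linear relation that kills one copy of the weight lattice --- morally, the total weight around all boundary intervals must cancel.

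\emph{Second}, I would verify that multiplication is genuinely additive in these coordinates. The definition of the product replaces each transverse crossing by an interior endpoint, and the homotopy invariance of the whole setup (together with the balancing condition at interior endpoints) should guarantee that the resulting class only depends on the sum of the two coordinate vectors; no skein-type correction terms appear because, as the introduction stresses, weighted-cycle multiplication is $q$-commutative rather than skein-like. This is what distinguishes the algebra from a skein algebra and makes it a group algebra on the nose. I would also check commutativity and the Laurent (invertibility) property: homotopy move (2) provides inverses by reversing orientation and negating weights, so every generator is a unit.

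\emph{The main obstacle} I anticipate is the rank count, specifically proving that the proposed generators are \emph{linearly independent} and that the full list of homotopy moves imposes \emph{exactly} the relations I claim and no more --- in other words, that I have neither overcounted nor undercounted the constraints. Establishing that the $\tau$ trivalent-vertex parameters plus the boundary parameters span and are independent will likely require a careful inductive argument on the structure of $\ww$ (perhaps building $\ww$ up through weave equivalences and mutations, or analyzing a spanning-tree / cell decomposition of the complement of $\ww$ in the disk), and pinning down precisely the single relation that produces $\beta-1$ instead of $\beta$ is where the balancing condition must be used most delicately. A secondary but essential subtlety is confirming that the homotopy moves through tetravalent and hexavalent vertices (moves (5), (6)) do not impose additional hidden relations among the trivalent-vertex coordinates, so that the count of free parameters is exactly $\tau$ and not smaller.
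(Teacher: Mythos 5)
Your plan is a faithful outline of the theorem's content, and the sources of rank you identify ($\tau$ from trivalent vertices, $r(\beta-1)$ from boundary base points, with one copy of the weight lattice killed) are exactly the ones the paper uses. But the proposal stops at the point where the proof actually begins: you explicitly defer the spanning-and-independence argument as ``the main obstacle'' and only speculate that it ``will likely require a careful inductive argument.'' That argument \emph{is} the theorem; without it you have a statement of what must be shown, not a proof. In particular, you never define your trivalent-vertex ``weight-jump data'' as an actual homotopy invariant, and it is not clear such a local coordinate is even well defined across moves (4)--(6) and (12) without the global argument that follows.

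For comparison, here is how the paper closes that gap, in two reductions. First, the $r(\beta-1)$ part is handled by explicit generators: each base point beyond the first contributes the $r$ weighted cycles given by a small path around it labeled by the fundamental weights $\omega_1,\dots,\omega_r$ (with a single base point such a loop is trivial by move (3)). Second, reduced to one base point, one shows the rank is $\tau$ by (a) eliminating all interior endpoints of weighted chains using moves (2), (8)--(11), so every chain runs from boundary to boundary, and then (b) inducting on $\tau$: the base case $\tau=0$ uses moves (5), (6), (3) to trivialize everything (this is also where one sees tetravalent and hexavalent vertices impose no relations, the ``secondary subtlety'' you flagged); for the inductive step one cuts the disk with a simple arc isolating a single trivalent vertex $v$ of color $s_i$, and observes via move (4) that a chain parallel to the cut can be pushed through $v$ precisely when its weight is fixed by $s_i$, i.e.\ has no $\omega_i$-component. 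So each trivalent vertex contributes exactly one new $\mathbb{Z}$, generated by the $\omega_i$-coefficient. This cut-and-induct mechanism, which converts the vague ``weight-jump data'' into a genuine rank count, is the missing idea in your proposal.
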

\begin{proof} It suffices to show that the weighted cycles form a $\mathbb{Z}$-lattice of rank $\tau+r(\beta-1)$. First, we claim that each time we add a boundary base point, we add a summand of $\mathbb{Z}^r$. To see this, let $\eta$ be a small oriented path surrounding only the new base point. For each fundamental weight $\omega_i$ with $1\leq i\leq r$ we define a weighted cycle $\eta_i$ to be $\eta$ together with the weight $\omega_i$. These are the basis for the new $\mathbb{Z}^r$ summand.

The statement is now reduced to showing that if there is only one boundary base point, the rank of the lattice of weighted cycles is $\tau$. We observe that we can remove all interior endpoints for weighted chains inside a weighted cycle by using homotopy moves. To see this, note that we can use homotopy move (1) and (8) to remove all bivalent interior endpoints of weighted chains, and then use homotopy move (10) to make all remaining interior endpoints trivalent. We then do an induction on the number of interior endpoints. There is nothing to show for the base case with no interior endpoints. Inductively, pick a trivalent interior endpoint and draw a curve $C$ that goes from this interior endpoint to the boundary of the disk, intersecting only the interiors of the weighted chains inside the weighted cycle and the interiors of weave edges. By using homotopy move (8), we put a new bivalent interior endpoint at each intersection between $C$ and the weighted cycle. By using homotopy moves (9) and (12), we can push these bivalent interior endpoints off the boundary of the disk one-by-one, which creates a path to push the chosen trivalent interior endpoint off the boundary of the disk as well.

\begin{figure}[H]
\centering
\begin{tikzpicture}[baseline=-20,scale=0.8]
\foreach \i in {0,1,2}
{
    \draw [dashed, decoration={markings, mark=at position 0.5 with {\arrow{>}}}, postaction={decorate}] (90*\i:1.5) -- (0,0);
}
\node at (0,0) [] {$\bullet$};
\foreach \i in {1,2}
{
\draw [dashed, decoration={markings, mark=at position 0.6 with {\arrow{>}}}, postaction={decorate}] (-1.5,-0.75*\i) -- (1.5,-0.75*\i);
}
\draw [very thick] (-1.5,-2.25) -- (1.5,-2.25);
\draw [red,decoration={markings, mark=at position 0.5 with {\arrow{>}}}, postaction={decorate}] (0,0) -- node [right] {$C$} (0,-2.25);
\end{tikzpicture} \quad \quad $\longrightarrow$ \quad \quad
\begin{tikzpicture}[baseline=-20,scale=0.8]
    \draw [dashed, decoration={markings, mark=at position 0.5 with {\arrow{>}}}, postaction={decorate}] (-2.5,0) to [out=0,in=120] (0,-2);
    \draw [dashed, decoration={markings, mark=at position 0.5 with {\arrow{>}}}, postaction={decorate}] (2.5,0) to [out=180,in=60] (0,-2);
    \draw [dashed, decoration={markings, mark=at position 0.5 with {\arrow{>}}}, postaction={decorate}] (0,1.5) -- (0,-2);
\node at (0,-2) [] {$\bullet$};
\foreach \i in {1,2}
{
\draw [dashed, decoration={markings, mark=at position 0.6 with {\arrow{>}}}, postaction={decorate}] (-2.5,-0.75*\i) arc (90:0:0.75*3-0.75*\i);
\draw [dashed, decoration={markings, mark=at position 0.6 with {\arrow{<}}}, postaction={decorate}] (2.5,-0.75*\i) arc (90:180:0.75*3-0.75*\i);
}
\draw [very thick] (-2.5,-2.25) -- (2.5,-2.25);
\end{tikzpicture}
\caption{Removing a trivalent interior endpoint.}
\label{fig: reduction of trivalent endpoints}
\end{figure}

With the assumption that there are no interior endpoints for weighted chains, each weighted chain in a weighted cycle must be an arc that goes from some boundary interval to another boundary interval (may possibly be the same one), or forms a closed circle. By adding a generic bivalent interior endpoint and using a move similar to Figure \ref{fig: reduction of trivalent endpoints}, we can open up any closed circle into an arc as well. Thus, without loss of generality, we may assume that all weighted chains are pairwise non-intersecting arcs. Let us do an induction on the number of trivalent weave vertices $\tau$. For the base case where $\tau=0$, all weighted chains can be homotoped to the unique boundary interval using homotopy moves (3), (5), and (6), and they can be further shown to be trivial by moving the boundary endpoints together via homotopy move (7) and then applying the homotopy move (2).

Now inductively, since all weighted chains inside the weighted cycle are pairwise non-intersecting arcs, every connected component in the complement of the weighted cycle must contain some boundary interval of the disk. Let us fix one trivalent weave vertex $v$ of color $s_i$ inside one of these connected components. We choose a simple curve $\gamma$ inside this connected component, going from one point on the boundary to another point on the boundary, intersecting the weave at generic positions and isolating the weave vertex $v$ from the rest of the weave (see Figure \ref{fig: isolating a trivalent vertex}). We may choose $\gamma$ so that it also intersects a weave edge incident to $v$ at a point $p$. Note that $\gamma$ cuts the disk into two disks, with all weighted chains contained inside the disk without the trivalent weave vertex $v$. On the one hand, the smaller disk without the weave vertex $v$ has one fewer trivalent weave vertex, and hence by induction, the lattice of weighted cycles on this smaller disk is of rank $\tau-1$. On the other hand, by using homotopy move (4), most weighted chain with a support that is homotopic to $\gamma$ can be further homotoped to the boundary of the original disk through the other smaller disk containing $v$. For an obstructed weighted chain, let $\omega$ be its weight near the point $p$. Since $\{\omega_j\}$ is a basis of the weight lattice, we can write $\omega=\sum_jc_j\omega_j$. By using homotopy move (11), we can split this weighted chain into multiple weighted chains with the same support and with weights $c_j\omega_j$ near $p$. Because $s_i.\omega_j=\omega_j$ for all $j\neq i$, all these newly split weighted chains can be homotoped through the smaller disk except the one with the weight $c_i\omega_i$ and $c_i\neq 0$. This adds $1$ to the rank of the lattice of weighted cycles and hence the total rank is $\tau$. The induction is now complete.
\end{proof}

\begin{figure}[H]
    \centering
    \begin{tikzpicture}
        \draw (0,0)  circle [radius=1.5];
        \draw (45:1.5) arc (135:225:1.5) node [below right] {$\gamma$};
        \foreach \i in {0,1,2}
        {
        \draw [blue] (1,0) -- ++(60+120*\i:0.4);
        }
        \node [blue] at (1,0) [right] {$v$};
        \node at (0.6,0) [left] {$p$};
    \end{tikzpicture}
    \caption{Isolating a single trivalent weave vertex.}
    \label{fig: isolating a trivalent vertex}
\end{figure}

Since the weighted cycles are equipped with an intersection pairing, we can further quantize the weighted cycle algebra $\mathcal{W}(\ww)$ by imposing the following relation on weighted cycles:
\[
\eta_1\eta_2=q^{\{\eta_1,\eta_2\}}(\eta_1\#\eta_2),
\]
where $(\eta_1\#\eta_2)$ denotes the weighted cycle that is the classical product between $\eta_1$ and $\eta_2$. We denote the quantum cycle algebra by $\mathbb{W}(\ww)$.

It is not hard to see that we can recover $\mathcal{W}(\ww)$ from $\mathbb{W}(\ww)$ by setting $q=1$.

\subsection{Weighted Cycle Representatives for Y-Cycles}\label{subsec: 3.4}

We can construct weighted cycle representatives of Y-cycles as follows.
\begin{itemize}
    \item Let $\gamma$ be a Y-cycle. For each weave edge $e$ in $\gamma$, we draw a relative chain on each side of $e$ and orient them such that $e$ is on the right side of the relative chain.
    \item If $e$ has color $s_i$, we assign the weight $\gamma(e)\omega_i$ to each of the two relative chains.
    \item Near a bivalent weave vertex, we connect the nearby weighted chains as in the top left picture of Figure \ref{fig: Y-cycle representatives}.
    \item Near a trivalent weave vertex, we connect the nearby weighted chains as in the top right picture of Figure \ref{fig: Y-cycle representatives}, where 
    \[
    \mu_j:=|a_{j-1}-a_{j+1}|\omega_i-[a_{j-1}-a_{j+1}]_+\alpha_i
    \]
    with the indices taken modulo $3$. (Recall that $[n]_+:=\max\{n,0\}$, and $\alpha_i$ is the $i$th simple root.)
    \item Near a tetravalent weave vertex, we connect the nearby weighted chains as in the bottom left picture of Figure \ref{fig: Y-cycle representatives}.
    \item Near a hexavalent weave vertex, we connect the nearby weighted chains as in the bottom right picture of Figure \ref{fig: Y-cycle representatives}. The weight $\mu_i$ is determined by the other two weights via the balancing condition. Note that the interior endpoint in the center should be perturbed slightly to meet the generic position requirement.
    \begin{figure}[H]
        \centering
        \begin{tikzpicture}[scale=1.2,baseline=0]
        \draw [blue] (0,-1.5) node [below] {$s_i$}-- (0,1.5) node [above] {$s_i$};
        \node [blue] at (0,0) [] {$\bullet$};
            \draw [dashed,decoration={markings, mark=at position 0.5 with {\arrow{>}}}, postaction={decorate}] (-0.5,0) --node [left] {$a\omega_i$} (-0.5,1.5);
            \draw [dashed,decoration={markings, mark=at position 0.5 with {\arrow{>}}}, postaction={decorate}] (-0.5,-1.5) --node [left] {$a\omega_i$} (-0.5,0);
            \node at (-0.5,0) [] {$\bullet$};
            \draw [dashed,decoration={markings, mark=at position 0.5 with {\arrow{>}}}, postaction={decorate}] (0.5,1.5) --node [right] {$a\omega_i$} (0.5,0);
            \draw [dashed,decoration={markings, mark=at position 0.5 with {\arrow{>}}}, postaction={decorate}] (0.5,0) --node [right] {$a\omega_i$} (0.5,-1.5);
            \node at (0.5,0) [] {$\bullet$};
        \end{tikzpicture}
        \hspace{2cm}
        \begin{tikzpicture}[scale=1.5, baseline=0]
            \foreach \i in {1,2,3}
            {
            \draw [blue] (\i*120:1.5) -- (0,0);
            \draw [dashed,decoration={markings, mark=at position 0.5 with {\arrow{>}}}, postaction={decorate}] (-\i*120:1.5)++(-\i*120-90:0.45) -- (-\i*120-60:0.5);
            \draw [dashed,decoration={markings, mark=at position 0.5 with {\arrow{<}}}, postaction={decorate}] (-\i*120:1.5)++(-\i*120+90:0.45) -- (-\i*120+60:0.5);
            \draw [dashed,decoration={markings, mark=at position 0.7 with {\arrow{>}}}, postaction={decorate}] (\i*120+60:0.5) -- (\i*120-60:0.5);
            \node at (120*\i+60:0.5) [] {$\bullet$};
            \node at (120*\i+90-120:1.3) [] {$a_\i\omega_i$};
            \node at (120*\i+150-120:1.3) [] {$a_\i\omega_i$};
            \node at (120*\i+145-120:0.5) [] {$\mu_\i$};
            \node at (120*\i+100-120:0.6) [] {$s_i.\mu_\i$};
            \node [blue] at (5+120*\i:1.5) [] {$s_i$};
            }
        \end{tikzpicture}\\
        \begin{tikzpicture}[baseline=0,scale=2]
        \draw [blue] (-1,1) node [above left] {$s_i$}-- (1,-1);
        \draw [teal] (-1,-1) node [below left] {$s_j$} -- (1,1);
        \draw [dashed,decoration={markings, mark=at position 0.5 with {\arrow{>}}}, postaction={decorate}] (-1,-0.7) node[above] {$b\omega_j$} -- (-0.3,0);
        \draw [dashed,decoration={markings, mark=at position 0.5 with {\arrow{>}}}, postaction={decorate}] (0,0.3) -- (0.7,1) node [left] {$b\omega_j$};
        \draw [dashed,decoration={markings, mark=at position 0.5 with {\arrow{>}}}, postaction={decorate}] (1,0.7) node [below] {$b\omega_j$} -- (0.3,0);
        \draw [dashed,decoration={markings, mark=at position 0.5 with {\arrow{>}}}, postaction={decorate}] (0,-0.3) -- (-0.7,-1) node [right] {$b\omega_j$};
        \draw [dashed,decoration={markings, mark=at position 0.5 with {\arrow{>}}}, postaction={decorate}] (-0.7,1) node [right] {$a\omega_i$}-- (0,0.3);
        \draw [dashed,decoration={markings, mark=at position 0.5 with {\arrow{>}}}, postaction={decorate}] (0.3,0) -- (1,-0.7) node [above] {$a\omega_i$};
        \draw [dashed,decoration={markings, mark=at position 0.5 with {\arrow{>}}}, postaction={decorate}] (0.7,-1)node [left] {$a\omega_i$}-- (0,-0.3);
        \draw [dashed,decoration={markings, mark=at position 0.5 with {\arrow{>}}}, postaction={decorate}] (-0.3,0) -- (-1,0.7) node [below] {$a\omega_i$};
        \foreach \i in {0,...,3}
        {
        \node at (90*\i:0.3) [] {$\bullet$};
        \draw [dashed,decoration={markings, mark=at position 0.7 with {\arrow{>}}}, postaction={decorate}] (90-\i*90:0.3) -- (-\i*90:0.3);
        }
        \foreach \i in {0,1} {
        \node at (135-\i*180:0.5) [] {$b\omega_j$};
        \node at (45-\i*180:0.5) [] {$a\omega_i$};
        }
        \end{tikzpicture}\hspace{1cm}
        \begin{tikzpicture}[baseline=0,scale =1.3]
        \foreach \i in {1,2,3}
        {
        \draw [blue] (0,0) -- (120*\i:1.5);
        \draw [red] (0,0) -- (60+120*\i:1.5);
        }
        \foreach \i in {1,...,6}
        {
        \draw [dashed,decoration={markings, mark=at position 0.5 with {\arrow{<}}}, postaction={decorate}] (60*\i:1.5) ++ (60*\i+60:0.6) -- (30+60*\i:1);
        \draw [dashed,decoration={markings, mark=at position 0.5 with {\arrow{>}}}, postaction={decorate}](60*\i+60:1.5) ++ (60*\i:0.6) -- (30+60*\i:1);
        \draw [dashed,decoration={markings, mark=at position 0.5 with {\arrow{>}}}, postaction={decorate}] (30+60*\i:1) -- (0,0);
        \node at (30+60*\i:1) [] {$\bullet$};
        }
        \foreach \i in {1,...,6}
        {
        \node at (75+60*\i:0.7) [] {$\mu_\i$};
        }
        \node at (0,0) [] {$\bullet$};
        \foreach \i in {1,3,5}
        {
        \node at (60*\i+60-20:1.9) [] {$a_\i\omega_i$};                \node at (60*\i+60+20:1.9) [] {$a_\i\omega_i$};
        \node [blue] at (60*\i+60:1.7) [] {$s_i$};
        }
        \foreach \i in {2,4,6}
        {
        \node at (60*\i+60-20:1.9) [] {$a_\i\omega_j$};                \node at (60*\i+60+20:1.9) [] {$a_\i\omega_j$};
        \node [red] at (60*\i+60:1.7) [] {$s_j$};
        }
        \end{tikzpicture}
        \caption{Weighted cycle representatives for Y-cycles.}
        \label{fig: Y-cycle representatives}
    \end{figure}
\end{itemize}

\begin{prop} The balancing condition is satisfied at all interior endpoints of weighted chains inside a weighted cycle representative of a Y-cycle.
\end{prop}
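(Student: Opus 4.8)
The plan is to verify the balancing condition one interior endpoint at a time. Every interior endpoint in a weighted cycle representative is created near one of the three weave vertices, so it suffices to analyze the three local pictures in Figure~\ref{fig: Y-cycle representatives} separately. At each endpoint the balancing condition is a single linear identity among the weights of the incident weighted chains, read off after using homotopy move (2) to orient every incident chain into the endpoint. I would verify each identity using only three ingredients: the explicit weight assignments of the construction; the Weyl action on weights, in particular $s_i.\omega_i = \omega_i - \alpha_i$, $s_i.\alpha_i = -\alpha_i$, and, for $i \neq j$, $s_i.\omega_j = \omega_j$ together with $s_i.\alpha_j = \alpha_j + \alpha_i$ when $C_{ij} = -1$; and the defining multiplicity condition of a Y-cycle at that vertex.

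The tetravalent case is the easy one. There the colors $s_i,s_j$ satisfy $C_{ij}=0$, hence $s_i.\omega_j = \omega_j$ and $s_j.\omega_i = \omega_i$, so the two strands (of weights $a\omega_i$ and $b\omega_j$) cross any weave edge without changing weight. At each of the four interior endpoints one $a\omega_i$-strand and one $b\omega_j$-strand pass through, and orienting all four ends inward gives $a\omega_i + (-a\omega_i) + b\omega_j + (-b\omega_j) = 0$. The trivalent case carries the real content of this vertex type. At the trivalent vertex of color $s_i$ with edge multiplicities $a_1,a_2,a_3$, each of the three outer endpoints meets two edge chains (weights $a_j\omega_i$ and $a_{j+1}\omega_i$) and two connector chains (weights $\mu_j$ and $s_i.\mu_{j+1}$), so balancing reduces to
\[
\mu_j - s_i.\mu_{j+1} = (a_{j+1}-a_j)\omega_i .
\]
Using $s_i.\omega_i=\omega_i-\alpha_i$, $s_i.\alpha_i=-\alpha_i$ and the elementary identity $|n|-[n]_+=[-n]_+$, one computes $s_i.\mu_k = |a_{k-1}-a_{k+1}|\omega_i - [a_{k+1}-a_{k-1}]_+\alpha_i$, and the displayed identity splits into the $\omega_i$-coefficient relation $|a_{j-1}-a_{j+1}| - |a_{j-1}-a_j| = a_{j+1}-a_j$ and the $\alpha_i$-coefficient relation $[a_{j-1}-a_{j+1}]_+ - [a_{j-1}-a_j]_+ = 0$. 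A short three-way casework shows that both hold \emph{precisely} because the minimum of $a_1,a_2,a_3$ is attained at least twice; this is exactly where the Y-cycle condition at a trivalent vertex enters.

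The hexavalent case is where I expect the main obstacle. Here the six outer endpoints are trivalent: each meets two edge chains and one connector running toward the center, and its balancing equation is what \emph{defines} the connector weight $\mu_k$, so nothing need be checked at the outer endpoints. The genuine constraint lives at the central endpoint, where all six connectors meet and balancing demands $\sum_{k} \varepsilon_k\,\mu_k = 0$ for the appropriate orientation signs $\varepsilon_k$. Two complications arise: first, since the central endpoint must be perturbed off the weave vertex to be generic, some connectors cross a weave edge before reaching it, so the weight arriving at the center is $s_i.\mu_k$ or $s_j.\mu_k$ rather than $\mu_k$; second, the two colors now interact through $C_{ij}=-1$, so one must use $s_i.\alpha_j = \alpha_i+\alpha_j$ (and its $s_j$-analogue) to combine terms. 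The calculation amounts to showing that the six locally determined weights close up consistently around the hexagon, and I expect this to follow from the hexavalent multiplicity condition $\gamma(a)-\gamma(d) = \gamma(e)-\gamma(b) = \gamma(c)-\gamma(f)$, which supplies exactly the linear relation needed to cancel the accumulated $\alpha_i$- and $\alpha_j$-contributions.

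In summary, the verification is local and purely computational at each endpoint, the tetravalent and trivalent cases reduce to the $s_i$-action identities above (with the trivalent case invoking the "minimum attained twice" condition), and the only step requiring care is the central balancing at a hexavalent vertex, where the perturbation-induced Weyl reflections and the two interacting colors must be reconciled using the hexavalent multiplicity relation.
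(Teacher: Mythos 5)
Your proposal follows essentially the same route as the paper: local casework at each vertex type, with the trivalent case settled by the same three-way analysis (minimum attained twice) after expanding the Weyl action on $\mu_j$, the tetravalent case dismissed as immediate, and the hexavalent case reduced to the perturbed central endpoint whose balancing follows from the condition $\gamma(a)-\gamma(d)=\gamma(e)-\gamma(b)=\gamma(c)-\gamma(f)$. One caveat on the part you left as a sketch: after perturbing the central endpoint off the vertex, the connectors cross up to \emph{three} weave edges on their way in, so the weights arriving there are twisted by Weyl words of length up to three (the paper's computation sums $s_1.\mu_1+s_1s_2.\mu_2+s_2s_1s_2.\mu_3+s_2s_1.\mu_4+s_2.\mu_5+\mu_6$), not merely by single reflections $s_i.\mu_k$ or $s_j.\mu_k$ as you state; with that correction your outlined cancellation via $s_i.\alpha_j=\alpha_i+\alpha_j$ and the hexavalent multiplicity relation goes through exactly as in the paper.
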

\begin{proof}The balancing condition on the two interior endpoints near a bivalent weave vertex is obvious.

Next, let us consider an interior endpoint near a trivalent weave vertex, say the left one in the left picture of Figure \ref{fig: Y-cycle representatives}. The total weights incident to that endpoint is
\begin{align*}
&a_2\omega_i+s_i.\mu_2-a_1\omega_i-\mu_1\\
=&a_2\omega_i+|a_1-a_3|\omega_i-|a_1-a_3|\alpha_i+[a_1-a_3]_+\alpha_i-a_1\omega_i-|a_3-a_2|\omega_i+[a_3-a_2]_+\alpha_i\\
=&(a_2+|a_1-a_3|-a_1-|a_3-a_2|)\omega_i+(-|a_1-a_3|+[a_1-a_3]_++[a_3-a_2]_+)\alpha_i.
\end{align*}
Depending on the relations among the multiplicities $a_1,a_2$, and $a_3$, we have three cases to consider.
\begin{enumerate}[(i)]
    \item $a_1=a_2\leq a_3$: in this case, the total incident weight is
    \[
    (a_2+(a_3-a_1)-a_1-(a_3-a_2))\omega_i+(-(a_3-a_1)+(a_3-a_2))\alpha_i=0.
    \]
    \item $a_1=a_3\leq a_2$: in this case, the total incident weight is
    \[
    (a_2-a_1-(a_2-a_3))\omega_i=0.
    \]
    \item $a_2=a_3\leq a_1$: in this case, the total incident weight is
    \[
    (a_2+(a_1-a_3)-a_1)\omega_i+(-(a_1-a_3)+(a_1-a_3))\alpha_i=0.
    \]
\end{enumerate}
The balancing condition on the other two endpoints can be proved by symmetric arguments.

The balancing condition on the four interior endpoints near a tetravalent weave vertex is obvious since there is an exact match of incoming and outgoing weighted chains at each interior endpoint.

Lastly, the only non-obvious balancing condition near a hexavalent weave vertex is on the center interior endpoint. By perturbing it upward slightly, we get the following picture.
\begin{figure}[H]
    \centering
    \begin{tikzpicture}[scale=0.8]
        \foreach \i in {1,2,3}
        {
        \draw [blue] (\i*120:1.5) -- (0,0);
        \draw [red] (\i*120+60:1.5) -- (0,0);
        }
        \foreach \i in {1,...,6}
        {
        \draw [dashed,decoration={markings, mark=at position 0.5 with {\arrow{>}}}, postaction={decorate}] (90+60*\i:1.5) -- (0.1,0.4);
        \node at (90+60*\i:1.7)[] {$\mu_\i$};
        }
        \node at (0.1,0.4) [] {$\bullet$};
    \end{tikzpicture}
    \caption{Perturbing the center interior endpoint at a hexavalent vertex.}
\end{figure}
Without loss of generality, let us assume that the blue weave edges are colored by $s_1$ and the red weave edges are colored by $s_2$; by construction, $\mu_i=a_{i+1}\omega_2-a_i\omega_1$ for $i=1,3,5$, and $\mu_i=a_{i+1}\omega_1-a_i\omega_2$ for $i=2,4,6$ (indices modulo $6$). Then the sum of the weights incident to the center interior endpoint is
\begin{align*}
    &s_1.\mu_1+s_1s_2.\mu_2+s_2s_1s_2.\mu_3+s_2s_1.\mu_4+s_2.\mu_5+\mu_6\\
    =&a_2\omega_2-a_1(\omega_1-\alpha_1)+a_3(\omega_1-\alpha_1)-a_2(\omega_2-\alpha_1-\alpha_2)+a_4(\omega_2-\alpha_1-\alpha_2)-a_3(\omega_1-\alpha_1-\alpha_2)\\
    &a_5(\omega_1-\alpha_1-\alpha_2)-a_4(\omega_2-\alpha_2)-+a_6(\omega_2-\alpha_2)-a_5\omega_1+a_1\omega_1-a_6\omega_2-\\
    =&(a_1+a_2-a_4-a_5)\alpha_1+(a_2+a_3-a_5-a_6)\alpha_2.
\end{align*}
Both coefficients vanish due to the condition that $a_1-a_4=a_3-a_6=a_5-a_2$.
\end{proof}

Furthermore, we can recover the intersection pairing between Y-cycles from the intersection pairing between their weighted cycle representatives. The following lemma is useful when computing the intersection pairing between complicated weighted cycles.

\begin{lem}\label{lem: boundary intersection} Let $C$ be a simple closed curve on the disk intersecting the weave $\ww$ at generic positions. Suppose $\ww$ does not have any trivalent vertices in the region enclosed by $C$. Let $\eta_1$ and $\eta_2$ be two weighted cycles and let $p$ be a point on $C$ that is away from $\eta_1$, $\eta_2$, and any weave edges in $\ww$. Then the contribution to $\{\eta_1,\eta_2\}$ from the region enclosed by $C$ is equal to $\{\eta'_1,\eta'_2\}_{C\setminus\{p\}}$, where $\eta'_i$ is the truncation of $\eta_i$ in an outward tubular neighborhood of $C$.
\end{lem}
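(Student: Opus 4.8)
The plan is to realize the enclosed region as a sub-weave in its own right and read off the result from the rank computation of Theorem \ref{thm: weighted cycle algebra is a torus}. Let $R$ be the closed sub-disk bounded by $C$. Since $C$ is an interior simple closed curve, $R$ contains no boundary base points of the original disk, so we may regard $C$ as the boundary of $R$ carrying the single chosen point $p$ as its only base point; then $C\setminus\{p\}$ is one boundary interval. Restricting $\ww$ to $R$ yields a weave $\ww|_R$ with $\beta=1$ base point and, by hypothesis, $\tau=0$ trivalent vertices. Let $\eta_i^{\mathrm{in}}$ be the inward truncation of $\eta_i$, namely the part of $\eta_i$ lying in $R$, whose boundary endpoints are the crossings $\eta_i\cap C$ on $C\setminus\{p\}$; this is a genuine weighted relative cycle on $\ww|_R$.

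First I would apply Theorem \ref{thm: weighted cycle algebra is a torus} to $\ww|_R$: its weighted cycle algebra $\cW(\ww|_R)$ has rank $\tau+r(\beta-1)=0$, hence is the trivial torus, so every weighted relative cycle on $R$ — in particular $\eta_1^{\mathrm{in}}$ — is homotopic to the identity $1$. By the homotopy invariance of the intersection pairing established just above, and since the empty chain $1$ has no crossings or endpoints, we get $\{\eta_1^{\mathrm{in}},\eta_2^{\mathrm{in}}\}=\{1,\eta_2^{\mathrm{in}}\}=0$, the pairing being computed on $\ww|_R$. Expanding this total intersection number into its interior and boundary parts gives
\[
0=\sum_{x\in \eta_1^{\mathrm{in}}\cap\eta_2^{\mathrm{in}}}\{\eta_1^{\mathrm{in}},\eta_2^{\mathrm{in}}\}_x+\{\eta_1^{\mathrm{in}},\eta_2^{\mathrm{in}}\}_{C\setminus\{p\}}.
\]
The interior crossings of $\eta_1^{\mathrm{in}}$ and $\eta_2^{\mathrm{in}}$ are exactly the crossings of $\eta_1$ and $\eta_2$ in the enclosure of $C$, with the same local contributions, so the first sum is precisely the enclosure contribution to $\{\eta_1,\eta_2\}$. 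Therefore the enclosure contribution equals $-\{\eta_1^{\mathrm{in}},\eta_2^{\mathrm{in}}\}_{C\setminus\{p\}}$.

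It then remains to identify this inward boundary term with the outward one, i.e. to prove $\{\eta_1^{\mathrm{in}},\eta_2^{\mathrm{in}}\}_{C\setminus\{p\}}=-\{\eta_1',\eta_2'\}_{C\setminus\{p\}}$, after which the two minus signs produce the $+$ sign asserted in the lemma. At each crossing $P$ of $\eta_i$ with $C$, the inward and outward truncations carry the same weight (no weave edge sits at $P$) but opposite source/sink type, and the orientation of $C$ as $\partial R$ is opposite to the one seen from the outward collar. In the boundary pairing $\tfrac12\sum \sgn(p_1)\sgn(p_2)\sgn_C(p_1,p_2)I_C(\mu_1,\mu_2)$, the two source/sink reversals cancel in the product $\sgn(p_1)\sgn(p_2)$; the factor $I_C$ is unchanged, since both pairings use the same base point $p$ and hence the same arc of $C$ between any two endpoints, and $I_C$ is insensitive to reversing that arc because $(\mu_1,s_{i_1}\cdots s_{i_k}.\mu_2)=(\mu_2,s_{i_k}\cdots s_{i_1}.\mu_1)$ by Weyl-invariance of $(\cdot,\cdot)$; and the single surviving change is the flip of $\sgn_C$ caused by the opposite orientation of $C$. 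This leaves exactly one overall sign, giving the desired identity.

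The main obstacle is precisely this final orientation and sign bookkeeping: confirming that exactly one of the three sign factors flips when passing between the inward pairing on $R$ and the outward pairing of the statement. The genuinely delicate point is the insensitivity of the Weyl-twisted inner product $I_C$ to the choice of arc (equivalently, to the orientation of $C$), which must be checked carefully against Definition \ref{defn: pairing along boundary interval} using the identity above; everything else is a direct consequence of the triviality of $\cW(\ww|_R)$ together with homotopy invariance.
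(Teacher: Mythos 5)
Your proposal is correct and follows essentially the same route as the paper's proof: restrict to the sub-disk bounded by $C$, apply Theorem \ref{thm: weighted cycle algebra is a torus} with $\tau=0$, $\beta=1$ to conclude every weighted cycle there is trivial, deduce $\{\eta_1^{\mathrm{in}},\eta_2^{\mathrm{in}}\}=0$ by homotopy invariance, and split this vanishing total pairing into the interior (enclosure) contribution plus the boundary term along $C\setminus\{p\}$. The only difference is that you explicitly verify the final identification $-\{\eta_1^{\mathrm{in}},\eta_2^{\mathrm{in}}\}_{C\setminus\{p\}}=\{\eta_1',\eta_2'\}_{C\setminus\{p\}}$ (both endpoint signs flip and cancel, $I_C$ is orientation-insensitive by Weyl invariance, and only $\sgn_C$ flips), a sign bookkeeping step the paper asserts without proof.
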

\begin{proof} Note that the region enclosed by $C$ is also a disk. Since the restriction of $\ww$ to this smaller disk contains no trivalent weave vertices, and there is only one boundary base point $p$, by Theorem \ref{thm: weighted cycle algebra is a torus}, all weighted cycles inside this smaller disk are trivial. Let $\eta''_i$ be the truncation of $\eta_i$ inside this smaller disk. It follows from the discussion above that $\{\eta''_1,\eta''_2\}=0$. But the intersection pairing $\{\eta''_1,\eta''_2\}$ is the sum of the local contribution to the original intersection pairing $\{\eta_1,\eta_2\}$ together with the intersection pairing between $\eta''_1$ and $\eta''_2$ along the boundary interval $C\setminus\{p\}$. Thus, we can conclude that the local contribution to the original intersction pairing $\{\eta_1,\eta_2\}$ is $-\{\eta''_1,\eta''_2\}_{C\setminus\{p\}}=\{\eta'_1,\eta'_2\}_{C\setminus\{p\}}$.
\end{proof}

\begin{figure}[H]
    \centering
    \begin{tikzpicture}[baseline=0]
        \draw  [dashed,decoration={markings, mark=at position 0.7 with {\arrow{>}}}, postaction={decorate}] (-1,1) node [above left] {$\eta_1$}--(1,-1); 
        \draw [purple,dashed,decoration={markings, mark=at position 0.7 with {\arrow{>}}}, postaction={decorate}] (1,1) node [above right] {$\eta_2$} -- (-1,-1);
        \node [purple] at (0.3,0.3) [right] {$\mu_2$};
        \node at (-0.3,0.3) [left] {$\mu_1$};
        \draw (0,0) circle [radius=1];
        \node at (0,1) [] {$\bullet$};
        \node at (0,1) [above] {$p$};
        \node at (0,-1) [below] {$C$};
    \end{tikzpicture}\hspace{2cm}
    \begin{tikzpicture}[baseline=0]
    \draw (-2,-1) -- (2,-1) node [right] {$C\setminus \{p\}$};
    \draw [dashed,decoration={markings, mark=at position 0.5 with {\arrow{>}}}, postaction={decorate}] (1.5,1) -- node [right] {$\mu_1$} (1.5,-1);
    \draw [purple,dashed,decoration={markings, mark=at position 0.5 with {\arrow{<}}}, postaction={decorate}] (0.5,1) -- node [right] {$\mu_2$} (0.5,-1);
    \draw [dashed,decoration={markings, mark=at position 0.5 with {\arrow{<}}}, postaction={decorate}] (-0.5,1) -- node [right] {$\mu_1$} (-0.5,-1);
    \draw [purple,dashed,decoration={markings, mark=at position 0.5 with {\arrow{>}}}, postaction={decorate}] (-1.5,1) -- node [left] {$\mu_2$} (-1.5,-1);
    \end{tikzpicture}
    \caption{Demonstration of Lemma \ref{lem: boundary intersection}: the local contribution in the left picture is $(\mu_1,\mu_2)$, which is equal to the boundary interval contribution in the right picture.}
\end{figure}

\begin{thm}\label{thm: recovering pairing between Y-cycles} Let $\eta_1$ and $\eta_2$ be the weighted cycle representatives of Y-cycles $\gamma_1$ and $\gamma_2$, respectively. Then the intersection pairing $\{\gamma_1,\gamma_2\}$ is equal to the intersection pairing $\{\eta_1,\eta_2\}$.
\end{thm}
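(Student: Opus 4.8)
The plan is to match the two pairings contribution by contribution, exploiting that both are sums of purely local data. In the weighted cycle representative of Subsection \ref{subsec: 3.4}, every weighted chain runs parallel to a weave edge away from the vertices, so the chains of $\eta_1$ and $\eta_2$ can only cross inside small neighborhoods of the weave vertices. Writing $\{\eta_1,\eta_2\}_{(v)}$ for the crossings collected in a small disk $D_v$ around a vertex $v$, we get
\[
\{\eta_1,\eta_2\}=\sum_{v\in V(\ww)}\{\eta_1,\eta_2\}_{(v)}+\sum_{C}\{\eta_1,\eta_2\}_C ,
\]
the second sum running over the genuine boundary intervals. This already mirrors the shape of Definition \ref{defn: pairing between Y-cycles}, so it suffices to match each vertex contribution with the corresponding determinant (or $0$) and each boundary interval with $\sum_{e,e'}\gamma(e)\gamma'(e')\{e,e'\}$.

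The main device is a uniform reduction of every vertex contribution to a boundary pairing, generalizing Lemma \ref{lem: boundary intersection}. Enclose a single vertex $v$ in a circle $C_v$ carrying one base point $p$, and let $\eta''_1,\eta''_2$ be the truncations of $\eta_1,\eta_2$ to $D_v$. Decomposing the pairing computed inside $D_v$ gives $\{\eta''_1,\eta''_2\}=\{\eta_1,\eta_2\}_{(v)}+\{\eta''_1,\eta''_2\}_{C_v\setminus\{p\}}$. By Theorem \ref{thm: weighted cycle algebra is a torus} the algebra $\cW(D_v)$ has rank $\tau_v+r(\beta-1)=\tau_v$, which is $0$ for a tetravalent or hexavalent vertex and $1$ for a trivalent vertex; since $\{\cdot,\cdot\}$ is skew-symmetric it vanishes identically on a lattice of rank $\le 1$, whence $\{\eta''_1,\eta''_2\}=0$ and $\{\eta_1,\eta_2\}_{(v)}=-\{\eta''_1,\eta''_2\}_{C_v\setminus\{p\}}$. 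The payoff is that only the chains crossing $C_v$ enter the right-hand side, and these carry the clean edge weights $\gamma(e)\omega_i$; the piecewise weights $\mu_j$ of Subsection \ref{subsec: 3.4} stay inside $D_v$ and never appear.

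It then remains to evaluate these boundary pairings. For a tetravalent vertex homotopy invariance together with move (5) lets the chains of $\eta_1$ be slid through $v$, so $\{\eta_1,\eta_2\}_{(v)}=0$, matching the ``$0$ otherwise'' entry. For a hexavalent vertex the six chains cross $C_v$ in the cyclic color pattern $s_i,s_j,s_i,s_j,s_i,s_j$ with weights that are multiples of $\omega_i$ and $\omega_j$; ordering the endpoints around $C_v$, reading off their source/sink signs, and applying $I_{C_v}$ with the intervening reflections, the braid relation $s_is_js_i=s_js_is_j$ collapses the sum to the negative of one of the two determinants in the lemma following Definition \ref{defn: pairing between Y-cycles}, which by that lemma equals the hexavalent term. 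For a trivalent vertex all three edges share a single color $s_i$, so $I_{C_v}$ involves only powers of $s_i$ and all weights are multiples of $\omega_i$; using $(\omega_i,\omega_i)=(C^{-1})_{ii}$ and $(\omega_i,s_i.\omega_i)=(C^{-1})_{ii}-1$, the six boundary terms organize into exactly $-\det\begin{pmatrix}1&1&1\\ \gamma(a)&\gamma(b)&\gamma(c)\\ \gamma'(a)&\gamma'(b)&\gamma'(c)\end{pmatrix}$, so that $\{\eta_1,\eta_2\}_{(v)}$ equals the trivalent term.

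Finally, on a genuine boundary interval each external edge $e$ of color $s_{i_0}$ contributes two boundary endpoints of weight $\gamma(e)\omega_{i_0}$, lying on opposite sides of $e$ with opposite source/sink signs. Writing $\alpha_{i_0}=(1-s_{i_0}).\omega_{i_0}$ and $\alpha_{i_k}=(1-s_{i_k}).\omega_{i_k}$, the four endpoint pairs attached to a pair of external edges $e,e'$ reproduce the four terms obtained by expanding $(\alpha_{i_0},s_{i_1}\cdots s_{i_{k-1}}.\alpha_{i_k})$, and the factors $\tfrac12$ in Definitions \ref{defn: pairing between Y-cycles} and \ref{defn: pairing along boundary interval} agree, matching $\{\eta_1,\eta_2\}_C$ with $\sum_{e,e'}\gamma(e)\gamma'(e')\{e,e'\}$. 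I expect the trivalent computation to be the principal obstacle: it is the only vertex whose Y-cycle term is genuinely nonzero and is not forced by a braid or commutation relation, so the matching rests on a delicate accounting of the cyclic order of the six endpoints around $C_v$ and of their source/sink signs, together with the two inner-product values $(\omega_i,\omega_i)$ and $(\omega_i,s_i.\omega_i)$.
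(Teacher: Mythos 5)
Your proposal is correct, and its skeleton (isolate all crossings near weave vertices, then match vertex contributions and boundary-interval contributions separately) coincides with the paper's; the genuine difference lies in how the vertex contributions are evaluated. The paper invokes the enclosing-circle reduction (Lemma \ref{lem: boundary intersection}) only at hexavalent vertices, precisely because that lemma is stated under the hypothesis that the enclosed region contains no trivalent vertex; the trivalent contribution is instead computed directly from the explicit connecting weights $\mu_j=|a_{j-1}-a_{j+1}|\omega_i-[a_{j-1}-a_{j+1}]_+\alpha_i$ via $(a'_j\omega_i,\,s_i.\mu_j-\mu_j)=a'_j(a_{j-1}-a_{j+1})$, and the tetravalent contribution vanishes because its intersections occur in canceling pairs. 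You instead make the reduction uniform: inside a disk around a single trivalent vertex with one base point the lattice of weighted cycle classes has rank $\tau_v+r(\beta-1)=1$ by Theorem \ref{thm: weighted cycle algebra is a torus}, and a skew-symmetric pairing on a lattice of rank at most one vanishes, so the truncated pairing is zero there as well. This is a valid strengthening of Lemma \ref{lem: boundary intersection}, but it needs slightly more than the lemma's rank-$0$ case does: you must use that the intersection pairing descends to a \emph{bilinear} form on the class lattice, i.e.\ homotopy invariance (the paper's proposition) together with additivity under the stacking product and vanishing of self-pairings; these follow immediately from the pairing being a sum over pairs of chains, but they are being used and should be stated. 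What your route buys is uniformity and the fact that only the clean edge weights $\gamma(e)\omega_i$ ever enter the computation, the piecewise weights $\mu_j$ never appearing; what it costs is that the trivalent determinant must now be extracted from a boundary computation with six endpoints per truncated cycle (thirty-six pairs, with $I_{C_v}$ alternating between $(\omega_i,\omega_i)=(C^{-1})_{ii}$ and $(\omega_i,s_i.\omega_i)=(C^{-1})_{ii}-1$), which you correctly identify as the crux but leave as an assertion — the paper's direct trivalent computation is considerably shorter, while your hexavalent and tetravalent cases are comparable in effort to the paper's. Your boundary-interval matching is essentially the paper's calculation, carried out by expanding $(\alpha_{i_0},s_{i_1}\cdots s_{i_{k-1}}.\alpha_{i_k})$ through $\alpha_i=(1-s_i).\omega_i$ over the four endpoint pairs rather than by first merging the two flanking chains with homotopy move (7).
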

\begin{proof} We can draw the weighted cycle representative $\eta_2$ in a tubular neighborhood closer to the weave edges that $\eta_1$ so that $\eta_1$ and $\eta_2$ only intersect near the weave vertices. Then it suffices to check the intersection pairings between $\eta_1$ and $\eta_2$ at each weave vertex and on each boundary interval and then compare the results with the intersection pairing contribution to $\{\gamma_1,\gamma_2\}$ from weave vertices and external edges.

There are no intersections between $\eta_1$ and $\eta_2$ occurring near a bivalent weave vertex.

At a trivalent weave vertex, by construction, all intersections must occur between the $\eta_2$ weighted chains parallel to weave edges and the $\eta_1$ weighted chains perpendicular to weave edges. Let $a_1,a_2,a_3$ be the multiplicities for $\gamma_1$ and let $a'_1,a'_2,a'_3$ be the multiplicities for $\gamma_2$. Then at each weave edge, we will have an intersection pairing contribution of the form
\[
(a'_j\omega_i,s_i.\mu_j)-(a'_j\omega_i,\mu_j)=(a'_j\omega_i,s_i.\mu_j-\mu_j)=a'_j\left(2[a_{j-1}-a_{j+1}]_+-|a_{j-1}-a_{j+1}|\right)=a'_j(a_{j-1}-a_{j+1}).
\]
Thus, the total contribution locally near the trivalent weave vertex is
\[
\sum_{j=1}^3a'_j(a_{j-1}-a_{j+1})=\det\begin{pmatrix} 1 & 1 & 1 \\ a_1 & a_2 & a_3\\ a'_1 & a'_2 & a'_3\end{pmatrix}.
\]

At a tetravalent weave vertex, it is not hard to see that all intersections between $\eta_1$ and $\eta_2$ come in canceling pairs and therefore the total contribution locally near a tetravalent weave vertex is $0$.

At a hexavalent weave vertex, we first draw a small circle $C$ cutting out the hexavalent weave vertex and put a base point $p$ at a generic position along $C$. Then we apply Lemma \ref{lem: boundary intersection} and turn the problem to an intersection pairing computation along the interval $C\setminus \{p\}$. Without loss of generality, we may assume that the truncations of the weighted cycles look like the following.
\begin{figure}[H]
    \centering
    \begin{tikzpicture}[scale=1.5]
    \foreach \i in {1,...,6}
    {
    \draw [dashed,decoration={markings, mark=at position 0.5 with {\arrow{<}}}, postaction={decorate}] (-\i*2+0.4,0) -- (-\i*2+0.4,1);
    \draw [dashed,decoration={markings, mark=at position 0.5 with {\arrow{<}}}, postaction={decorate}](-\i*2-0.4,1) -- (-\i*2-0.4,0);
    \draw [purple,dashed,decoration={markings, mark=at position 0.5 with {\arrow{<}}}, postaction={decorate}] (-\i*2+0.2,0) -- (-\i*2+0.2,1);
    \draw [purple,dashed,decoration={markings, mark=at position 0.5 with {\arrow{<}}}, postaction={decorate}](-\i*2-0.2,1) -- (-\i*2-0.2,0);
    }
    \draw (-12.5,0) -- (-1.5,0);
    \foreach \i in {1,3,5}
    {
    \node at (-\i*2+0.4,1) [above] {$a_\i\omega_i$};
    \node [purple] at (-\i*2-0.2,1) [above] {$a'_\i\omega_i$};
    \draw [blue] (-\i*2,1) -- (-\i*2,0) node [below] {$s_i$};
    }
    \foreach \i in {2,4,6}
    {
    \node at (-\i*2+0.4,1) [above] {$a_\i\omega_j$};
    \node [purple] at (-\i*2-0.2,1) [above] {$a'_\i\omega_j$};
    \draw [red] (-\i*2,1) -- (-\i*2,0) node [below] {$s_j$};
    }
    \end{tikzpicture}
    \caption{Truncation of weighted cycles near a hexavalent weave vertex; the black weighted chains are $\eta'_1$ and the purple weighted chains are $\eta'_2$.}
\end{figure}
\noindent Note that within each of the six collections of weighted chains near weave edges, the pairing between the chains in $\eta'_1$ and $\eta'_2$ is $0$. Therefore we only need to pay attention to how $\eta'_1$ in each collection is paired with $\eta'_2$ in every other collection. This allows us to do some homotopies to simplify: for example, to pair $\eta'_1$ in the $1$st collection with $\eta'_2$ in every other collection, we may simplify the configuration to the following, which yields a contribution of
\[
-\frac{1}{2}a_1(a'_2-a'_3-2a'_4-a'_5+a'_6)=a_1a'_1.
\]
\begin{figure}[H]
    \centering
    \begin{tikzpicture}[xscale=1.5]
        \draw [dashed,decoration={markings, mark=at position 0.5 with {\arrow{<}}}, postaction={decorate}] (-1.2,1.5) node [above] {$a_1\alpha_i$} -- (-1.2,0);
        \foreach \i in {2,...,6}
        {
        \draw [purple,dashed,decoration={markings, mark=at position 0.5 with {\arrow{<}}}, postaction={decorate}] (-\i-0.2,1.5) -- (-\i-0.2,0);
        }
        \draw (-6.5,0) -- (-0.5,0);
        \foreach \i in {1,3,5}
        {
        \draw [blue] (-\i,1.5) -- (-\i,0) node [below] {$s_i$};
        \draw [red] (-\i-1,1.5) -- (-\i-1,0) node [below] {$s_j$};
        }
        \node [purple] at (-3.2,1.5) [above] {$a'_3\alpha_i$};
        \node [purple] at (-5.2,1.5) [above] {$a'_5\alpha_i$};
        \foreach \i in {2,4,6}
        {
        \node [purple] at (-\i-0.2,1.5) [above] {$a'_\i\alpha_j$};
        }
    \end{tikzpicture}
    \caption{Simplifying the weighted chains.}
\end{figure}
\noindent By similar computations, we will get
\begin{align*}
\{\eta'_1,\eta'_2\}_{C\setminus\{p\}}=&a_1a'_1+a_2(a'_4+a'_5)+a_3(a'_2-a'_4+a'_6)-a_4(a'_1-a'_3+a'_5)-a_5(a'_2+a'_3)-a_6a'_6\\
=&a_1(a'_3-a'_5)-a_3(a'_1-a'_5)+a_5(a'_1-a'_3)\\
=&\det\begin{pmatrix} 1 & 1 & 1\\
a_1 & a_3 & a_5 \\
a'_1 & a'_3 & a'_5\end{pmatrix}.
\end{align*}

For two external weave edges $e$ and $e'$ incident to the same boundary interval, we first perform the following homotopy move (7) to the two weighted cycle representatives.
\begin{figure}[H]
    \centering
     \begin{tikzpicture}[baseline=-30,scale=1.3]
        \draw [very thick] (-0.5,0) -- (3.5,0);
        \draw [blue] (0,0) node [above] {$e$} -- (0,-1) node [below] {$s_{i_0}$};
        \draw [red] (3,0) node [above] {$e'$} -- (3,-1) node [below] {$s_{i_k}$};
        \draw [dashed,decoration={markings, mark=at position 0.5 with {\arrow{>}}}, postaction={decorate}] (0.3,0) -- node [right] {$\omega_{i_0}$} (0.3,-1);
        \draw [dashed,decoration={markings, mark=at position 0.5 with {\arrow{<}}}, postaction={decorate}] (-0.3,0) -- node [left] {$\omega_{i_0}$} (-0.3,-1);
        \draw [dashed,decoration={markings, mark=at position 0.5 with {\arrow{>}}}, postaction={decorate}] (3.3,0) -- node [right] {$\omega_{i_k}$} (3.3,-1);
        \draw [dashed,decoration={markings, mark=at position 0.5 with {\arrow{<}}}, postaction={decorate}] (3-0.3,0) -- node [left] {$\omega_{i_k}$} (3-0.3,-1);
        \node at (1,-1) [below] {$s_{i_1}$};
        \node at (2,-1) [below] {$s_{i_{k-1}}$};
        \node at (1.5,-0.5) [] {$\cdots$};
        \node at (0,-1.5) [] {$\eta_e$};
        \node at (3,-1.5) [] {$\eta_{e'}$};
    \end{tikzpicture} $=$ 
    \begin{tikzpicture}[baseline=-30,scale=1.3]
        \draw [very thick] (-0.5,0) -- (3.5,0);
        \draw [blue] (0,0) node [above] {$e$} -- (0,-1) node [below] {$s_{i_0}$};
        \draw [red] (3,0) node [above] {$e'$} -- (3,-1) node [below] {$s_{i_k}$};
        \draw [dashed,decoration={markings, mark=at position 0.5 with {\arrow{<}}}, postaction={decorate}] (0.3,0) -- node [right] {$\alpha_{i_0}$} (0.3,-1);
        \draw [dashed,decoration={markings, mark=at position 0.5 with {\arrow{>}}}, postaction={decorate}] (3-0.3,0) -- node [left] {$\alpha_{i_k}$} (3-0.3,-1);
        \node at (1,-1) [below] {$s_{i_1}$};
        \node at (2,-1) [below] {$s_{i_{k-1}}$};
        \node at (1.5,-0.5) [] {$\cdots$};
        \node at (0.3,-1.5) [] {$\eta_e$};
        \node at (3-0.3,-1.5) [] {$\eta_{e'}$};
    \end{tikzpicture}
    \caption{Homotoping weighted cycle representatives along a boundary interval.}
\end{figure}
Then by definition, the intersection pairing
\[
\{\eta_e,\eta_{e'}\}=\frac{1}{2}(\alpha_{i_0},s_{i_1}\cdots s_{i_{k-1}}\alpha_{i_k}),
\]
which agrees with the definition of $\{e,e'\}$ in Definition \ref{defn: pairing between Y-cycles}.
\end{proof}

Note that the weighted cycle representatives not only recover the intersection pairing between Y-cycles, they also allow us to define an intersection pairing between Y-cycles and weighted cycles. 

\begin{defn}\label{defn: intersection number} Let $\gamma$ be a Y-cycle and let $\eta_\gamma$ be its weighted cycle representative. Then the intersection number between $\gamma$ and an arbitrary weighted cycle $\eta$ is defined to be
\[
\inprod{\gamma}{\eta}:=\{\eta_\gamma,\eta\}.
\]
\end{defn}

Since the intersection pairing between weighted cycles are invariant under homotopies of weighted cycles, we can hence deduce the following corollary.

\begin{cor} The intersection number between Y-cycles and weighted cycles is invariant under homotopies of weighted cycles.
\end{cor}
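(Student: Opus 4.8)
The plan is to reduce the statement to the homotopy invariance of the intersection pairing between weighted cycles established above, applied with its first argument held fixed. The bridge is the identity
\[
\inprod{\gamma}{\eta}=\{\eta_\gamma,\eta\},
\]
where $\eta_\gamma$ is the weighted cycle representative of the Y-cycle $\gamma$ constructed in Subsection \ref{subsec: 3.4}. Granting this identity, the corollary is immediate: for homotopic weighted cycles $\eta\sim\eta'$ we get $\inprod{\gamma}{\eta}=\{\eta_\gamma,\eta\}=\{\eta_\gamma,\eta'\}=\inprod{\gamma}{\eta'}$, the middle equality being exactly the invariance of the weighted-cycle pairing.

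To establish the identity I would first replace $\eta$ by a generic representative that is transverse to $\ww$ and meets each weave edge only at interior points away from all weave vertices; such a representative is reached by a crossing-preserving local path homotopy, which does not alter $\inprod{\gamma}{\eta}$ since the defining formula depends only on the crossings of $\eta$ with weave edges and on the boundary data. I then draw $\eta_\gamma$ inside a tubular neighborhood of the support of $\gamma$ thin enough that the connecting chains near each weave vertex (those in Figure \ref{fig: Y-cycle representatives}) are confined to a small neighborhood of that vertex disjoint from $\eta$. With this choice $\eta$ meets only the chains of $\eta_\gamma$ running parallel to the weave edges, and each crossing of $\eta$ with a weave edge $e$ of color $s_i$ is matched by its two crossings with the oppositely oriented parallel chains of $\eta_\gamma$, each carrying weight $\gamma(e)\omega_i$.

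The core computation is then local and parallels the trivalent-vertex calculation in the proof of Theorem \ref{thm: recovering pairing between Y-cycles}. Because the two parallel chains carry opposite orientations, their contributions at a crossing of $\eta$ combine, with a global sign fixed by the convention that $e$ lies to the right of the parallel chains, to
\[
\gamma(e)\bigl((\omega_i,\mu)-(\omega_i,s_i.\mu)\bigr)=\gamma(e)(\omega_i,\mu-s_i.\mu)=\gamma(e)(\alpha_i,\mu)=\gamma(e)\inprod{e}{\eta}_p,
\]
using $\mu-s_i.\mu=(\alpha_i,\mu)\alpha_i$ and $(\omega_i,\alpha_i)=1$. The boundary-interval terms are matched to the $\inprod{e}{\eta}_C$ contributions by the same homotopy-along-a-boundary-interval argument used at the end of that proof. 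Summing over all weave edges weighted by $\gamma(e)$ yields $\inprod{\gamma}{\eta}=\{\eta_\gamma,\eta\}$.

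I expect the main obstacle to be the geometric bookkeeping in this identity rather than any algebraic subtlety: one must check that a single $\eta_\gamma$ can be made thin enough to miss all the (finitely many) connecting chains near every vertex while still crossing each parallel chain at every edge-crossing of $\eta$, and one must track the orientation signs to confirm that the two parallel-chain crossings combine to $\gamma(e)\inprod{e}{\eta}_p$ rather than cancelling. A self-contained alternative would be to verify invariance move-by-move against the homotopy list; there the only nontrivial cases are the vertex moves (4)--(6), where the defining conditions on a Y-cycle at trivalent, tetravalent, and hexavalent vertices are exactly what forces the local contributions to balance---move (4) being automatic, since $s_i.\mu=\mu$ forces $(\alpha_i,\mu)=0$ and hence a vanishing local intersection number.
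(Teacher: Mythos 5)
Your proposal is correct and follows essentially the same route as the paper: the paper's (very terse) proof is precisely that the pairing $\inprod{\gamma}{\eta}$ was defined ``through comparison'' to agree with $\{\eta_\gamma,\eta\}$ for the weighted cycle representative $\eta_\gamma$ of Subsection \ref{subsec: 3.4}, so invariance follows from the already-proved homotopy invariance of the pairing between weighted cycles. You simply make the bridge identity explicit and verify it locally (with the sign bookkeeping you flag being a harmless convention issue, since skew-symmetry of $\{\cdot,\cdot\}$ transfers invariance either way), which the paper leaves implicit.
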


\subsection{Merodromies}\label{subsec: 3.6}
Recall that each weave defines a flag moduli space $\cM(\ww)$, each point of which corresponds to a configuration of flags associated with faces of the weave $\ww$ (Subsection \ref{subsec: 2.2}). In this subsection, we define a map called \emph{merodromies}, which allows us to view weighted cycles on $\ww$ as functions on the flag moduli space $\cM(\ww)$.

In order to define the merodromies of weighted cycles, we need to first orient the weave edges, which corresponds to the choice of one of the $h$ distances (Definition \ref{defn: h distance}). In addition, we want to make sure that the weave edges are oriented in a compatible fashion so that the merodromies are homotopy-invariant.

\begin{defn}\label{defn: compatible orientations} A choice of orientations on all edges of a weave is said to be \emph{compatible} if the following are satisfied:
\begin{itemize}
\item each bivalent weave vertex is either a source or a sink;
\item there are two incoming edges and one outgoing edge at each trivalent weave vertex;
\item there are two adjacent incoming edges and two adjacent outgoing edges at each tetravalent weave vertex;
\item there are three adjacent incoming edges and three adjacent outgoing edges at each hexavalent weave vertex. 
\end{itemize}
\end{defn}
\begin{figure}[H]
    \centering
    \begin{tikzpicture}[baseline=0]
        \draw [blue, decoration={markings, mark=at position 0.5 with {\arrow{>}}}, postaction={decorate}] (-1,0.5) -- (0,0.5);
        \draw [blue, decoration={markings, mark=at position 0.5 with {\arrow{>}}}, postaction={decorate}] (1,0.5) -- (0,0.5);
        \node [blue] at (0,0.5) [] {$\bullet$};
        \draw [blue, decoration={markings, mark=at position 0.5 with {\arrow{<}}}, postaction={decorate}] (-1,-0.5) -- (0,-0.5);
        \draw [blue, decoration={markings, mark=at position 0.5 with {\arrow{<}}}, postaction={decorate}] (1,-0.5) -- (0,-0.5);
        \node [blue] at (0,-0.5) [] {$\bullet$};
    \end{tikzpicture}
    \hspace{2cm}
    \begin{tikzpicture}[baseline=0]
        \draw[blue, decoration={markings, mark=at position 0.5 with {\arrow{>}}}, postaction={decorate}] (150:1) -- (0,0);
        \draw[blue, decoration={markings, mark=at position 0.5 with {\arrow{>}}}, postaction={decorate}] (30:1) -- (0,0);
        \draw[blue, decoration={markings, mark=at position 0.5 with {\arrow{>}}}, postaction={decorate}] (0,0) -- (0,-1);
        \end{tikzpicture}
\hspace{2cm}
    \begin{tikzpicture}[baseline=0]
        \draw[blue, decoration={markings, mark=at position 0.5 with {\arrow{>}}}, postaction={decorate}] (135:1) -- (0,0);
        \draw[blue, decoration={markings, mark=at position 0.5 with {\arrow{>}}}, postaction={decorate}] (0,0) -- (-45:1);
        \draw[teal, decoration={markings, mark=at position 0.5 with {\arrow{>}}}, postaction={decorate}] (45:1) -- (0,0);
        \draw[teal, decoration={markings, mark=at position 0.5 with {\arrow{>}}}, postaction={decorate}] (0,0) -- (-135:1);
    \end{tikzpicture}        \hspace{2cm}\begin{tikzpicture}[baseline=0]
        \draw[blue, decoration={markings, mark=at position 0.5 with {\arrow{>}}}, postaction={decorate}] (150:1) -- (0,0);
        \draw[blue, decoration={markings, mark=at position 0.5 with {\arrow{>}}}, postaction={decorate}] (30:1) -- (0,0);
        \draw[red, decoration={markings, mark=at position 0.5 with {\arrow{>}}}, postaction={decorate}] (0,1) -- (0,0);
        \draw[blue, decoration={markings, mark=at position 0.5 with {\arrow{>}}}, postaction={decorate}] (0,0) -- (0,-1);
        \draw[red, decoration={markings, mark=at position 0.5 with {\arrow{>}}}, postaction={decorate}] (0,0) -- (-30:1);
        \draw[red, decoration={markings, mark=at position 0.5 with {\arrow{>}}}, postaction={decorate}] (0,0) -- (-150:1);
    \end{tikzpicture}
    \caption{Compatible orientation on weave edges.}
    \label{fig: orientation}
\end{figure}

\begin{defn} Once a compatible orientation is chosen, we define the \emph{sign} of an intersection between a weighted chain $\eta$ and a weave edge as follows; note that the sign of an intersection is \textbf{NOT} the same as the intersection number defined in Definition \ref{defn: intersection number}.
\begin{figure}[H]
    \centering
    \begin{tikzpicture}[baseline=0]
    \draw[blue,<-] (0,-1) -- (0,1);
    \draw [dashed,->] (-1,0) -- (1,0);
    \node at (0,-1.5) [] {$+$};
    \end{tikzpicture} \hspace{2cm}     \begin{tikzpicture}[baseline=0]
    \draw[blue,->] (0,-1) -- (0,1);
    \draw [dashed,->] (-1,0) -- (1,0);
    \node at (0,-1.5) [] {$-$};
    \end{tikzpicture} 
    \caption{Signs of the intersection between a weighted chain and a weave edge.}
    \label{fig:intersection sign}
\end{figure}
\end{defn}

\begin{defn} Given a configuration of flags in $\cM_\fr(\ww)$, we first fix a decoration on each flag, making them decorated flags. Let $\eta$ be a weighted chain. The \emph{initial decorated flag} (resp. \emph{terminal decorated flag}) of $\eta$ is the decorated flag along the boundary if the source (resp. \emph{target}) of $\eta$ is on the boundary of the disk, and is otherwise the decorated flag associated with the face containing the interior endpoint.

If $\eta$ does not intersect any weave edges, then $\eta$ must be contained inside some face, and hence the underlying undecorated flag of its initial and terminal decorated flags are the same. This implies that the terminal decoration is equal to $t$ times the initial decoration for some $t\in T$, and we define the \emph{merodromy} of $\eta$ to be
\[
M^\eta:=t^\mu,
\]
where $\mu$ is the weight associated with $\eta$.

If $\eta$ does intersect weave edges, we break $\eta$ down into segments cut out by the weave edges. We associate the initial decorated flag with the segment containing the source of $\eta$ and the terminal decorated flag with the segment containing the target of $\eta$, and associate the decorated flag of the local face for each of the remaining segments. Now suppose $p$ is an intersection point between a weighted chain $\eta$ with a weave edge $e$ of color $s_i$ such that $s_i.\mu$ and $\mu$ are the weights attached to $\eta$ before and after $\eta$ crossing $e$, and $xN$ and $yN$ are the decorated flags associated with the segments before and after $\eta$ crossing $e$; then the \emph{local merodromy} $M_p$ is defined to be
\[
M_p:=h_\pm(xN,yN)^\mu,
\]
where the subscript of $h$ depends on the sign of the intersection at $p$. The \emph{merodromy} along $\eta$ is then defined to be
\[
M^\eta:=\prod_p M_p
\]
where $p$ runs through all intersection points between $\eta$ and weave edges.

We extend merodromies for weighted cycles as follows: if a weighted cycle $\eta$ consists of weighted chains $\eta_1,\dots, \eta_k$, then its \emph{merodromy} is
\[
M^\eta:=\prod_{i=1}^k M^{\eta_i}.
\]
\end{defn}

\begin{rmk} Note that by Lemma \ref{lem: invariant under diagonal G-action}, the $h$ distances are invariant under the diagonal $G$-action; thus, (local) merodromies are not affected by the diagonal $G$-action. We will use this fact to simplify our proofs. 
\end{rmk}

\begin{rmk} Note that the interior endpoints of weighted chains do not contribute anything to merodromies. Also, by convention, the empty weighted cycle has a merodromy of $1$.
\end{rmk}

It may seem that the merodromy of a weighted cycle depends on the choice of decorations on flags. In the next few propositions, we will prove that merodromies are in fact independent of such choice and is also invariant under homotopies of weighted cycles.

\begin{prop}\label{prop: reduced word} Suppose $\eta$ is a weighted chain passing through a collection of weave edges $e_1, e_2,\dots, e_l$ with colors $s_{i_1},s_{i_2},\dots, s_{i_l}$ such that $s_{i_1}s_{i_2}\cdots s_{i_l}$ form a reduced word for a Weyl group element $w$. Suppose all intersections are positive (resp. negative). Let $x_0N, x_1N,\dots, x_lN$ be the decorated flags associated with the segments in $\eta$. Then $M^\eta=h_+(x_0N,x_lN)^\mu$ (resp. $M^\eta=h_-(x_0N,x_lN)^\mu$ where $\mu$ is the weight attached to the end of $\eta$.
\end{prop}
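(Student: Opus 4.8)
The plan is to induct on the number of crossings $l$, reducing the statement to a composition (cocycle) property of the $h_+$-distance along length-additive chains of flags. Set $w'=s_{i_2}\cdots s_{i_l}$. A suffix of a reduced word is again reduced, and all the intersection signs agree, so the subchain $\eta'$ crossing $e_2,\dots,e_l$ satisfies the hypotheses of the proposition with the same terminal weight $\mu$. Writing $\mu_k$ for the weight on the segment just after $e_k$, the weight rule forces $\mu_k=s_{i_{k+1}}\cdots s_{i_l}.\mu$, so in particular $\mu_1=w'.\mu$. By definition $M^\eta=M_{p_1}\cdot M^{\eta'}=h_+(x_0N,x_1N)^{\mu_1}\cdot M^{\eta'}$, and by induction $M^{\eta'}=h_+(x_1N,x_lN)^\mu$ (the base case $l=1$ being the definition of $M_{p_1}$). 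Thus it suffices to prove the composition law
\[
h_+(x_0N,x_lN)=\big(w'^{-1}.h_+(x_0N,x_1N)\big)\cdot h_+(x_1N,x_lN),
\]
where $v.h$ denotes the Weyl action on $T$, realized as $\overline{v}h\,\overline{v}^{-1}$ for any lift, together with the adjointness identity $(w'^{-1}.h)^\mu=h^{w'.\mu}$. Raising the law to $\mu$ and using $\mu_1=w'.\mu$ then yields $h_+(x_0N,x_lN)^\mu=h_+(x_0N,x_1N)^{\mu_1}M^{\eta'}=M^\eta$, closing the induction.

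To prove the composition law, write $x_0^{-1}x_1=n_1\overline{s}_{i_1}h_1m_1$ and $x_1^{-1}x_l=n_2\overline{w'}h'm_2$ with $n_i,m_i\in N$, $h_1=h_+(x_0N,x_1N)$, $h'=h_+(x_1N,x_lN)$; here $w(x_1B,x_lB)=w'$ because the chain $\eta'$ realizes the reduced word $w'$. Multiplying, using $TN=NT$ to slide $h_1$ across the intervening $N$-factor, and using $h_1\overline{w'}=\overline{w'}(w'^{-1}.h_1)$, one obtains
\[
x_0^{-1}x_l\in N\,(\overline{s}_{i_1}\,p\,\overline{w'})\,\big((w'^{-1}.h_1)\,h'\big)\,N
\]
for some $p\in N$, with $\overline{w}=\overline{s}_{i_1}\overline{w'}$ the reduced-word lift of $w=s_{i_1}w'$. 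Hence the law follows once one shows the refined Bruhat multiplication $\overline{s}_{i_1}\,p\,\overline{w'}\in N\overline{w}N$ \emph{with trivial torus part}, given $\ell(s_{i_1}w')=1+\ell(w')$. This is the main obstacle. The length-additivity is equivalent to $w'^{-1}(\alpha_{i_1})>0$, and this positivity is exactly what drives the computation: decomposing $p$ along $U_{\alpha_{i_1}}$ and the remaining positive root groups (which $\overline{s}_{i_1}$ permutes within $N$), the factor $\overline{s}_{i_1}U_{\alpha_{i_1}}\overline{s}_{i_1}^{-1}=U_{-\alpha_{i_1}}$ is conjugated by $\overline{w'}$ into $U_{w'^{-1}(\alpha_{i_1})}\subseteq N$. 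Since $x_0^{-1}x_l$ genuinely lies in the cell of $w$ (a building-theoretic consequence of length-additivity), the $U_{-\alpha_{i_1}}$-contribution must be absorbed, leaving $\overline{s}_{i_1}p\overline{w'}=s_{i_1}(p)\overline{w}$ with $s_{i_1}(p)\in N$ and no torus factor. I expect the cleanest rigorous route to be a rank-one ($\SL_2$) reduction that tracks this $U_{-\alpha_{i_1}}$-component explicitly; alternatively one can bypass all cell gymnastics by computing each $\omega_i\big(h_+(x_0N,x_lN)\big)$ as a highest-weight matrix coefficient of $x_0^{-1}x_l$ and invoking multiplicativity of such coefficients, which is precisely the form in which Goncharov--Shen define $h_\pm$.

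Finally, the weight bookkeeping and the negative case are routine. The adjointness $(w'^{-1}.h)^\mu=h^{w'.\mu}$ is just the statement that the Weyl action on $T$ by conjugation and the action on $X^*(T)$ are dual: $(w'^{-1}.h)^\mu=\mu(\overline{w'}^{-1}h\,\overline{w'})=(w'.\mu)(h)=h^{w'.\mu}$; together with $\mu_1=w'.\mu$ this completes the positive case. The negative case is identical after replacing every $\overline{s}_i$, $\overline{w}$ by $\doverline{s}_i$, $\doverline{w}$ and $h_+$ by $h_-$: the only structural input that changes is the lift identity $\doverline{w}=\doverline{s}_{i_1}\doverline{w'}$ (the second family of lifts also satisfies the braid relations), and $\doverline{s}_{i_1}$ obeys the same conjugation rules on root subgroups, so the refined Bruhat step and the adjointness bookkeeping go through verbatim.
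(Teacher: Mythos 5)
Your proposal is correct and is essentially the paper's own argument: the paper proves the statement in one stroke by writing $x_0^{-1}x_l\in(N\overline{s}_{i_1}h_1N)(N\overline{s}_{i_2}h_2N)\cdots(N\overline{s}_{i_l}h_lN)=N\overline{w}\prod_{j=1}^l w_{>j}^{-1}(h_j)N$ with $w_{>j}:=s_{i_{j+1}}\cdots s_{i_l}$, which is exactly the length-additive refined Bruhat multiplicativity (with torus parts tracked) that your induction reduces to, except that the paper simply asserts this identity in one display while you attempt to prove it. One small repair to your root-subgroup bookkeeping: conjugating $U_{-\alpha_{i_1}}$ by $\overline{w'}$ lands in $U_{-w'^{-1}(\alpha_{i_1})}$, a \emph{negative} root group; the correct manipulation is to push the $U_{\alpha_{i_1}}$-component of $p$ directly rightward across $\overline{w'}$ into $U_{w'^{-1}(\alpha_{i_1})}\subseteq N$ (using $w'^{-1}(\alpha_{i_1})>0$), or equivalently to conjugate the $U_{-\alpha_{i_1}}$-factor by the full lift $\overline{w}=\overline{s}_{i_1}\overline{w'}$ rather than by $\overline{w'}$ alone, after which no torus factor appears and your composition law follows.
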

\begin{proof} Without loss of generality let us assume that all intersections are positive. Suppose $x_{j-1}^{-1}x_j\in N\overline{s}_{i_j}h_jN$ for $j=1,2,\dots, l$. Then on the one hand, by definition, 
\[
M^\eta=\prod_{j=1}^lh_j^{w_{> j}.\mu}=\left(\prod_{j=1}^lw_{>j}^{-1}(h_j)\right)^\mu,
\]
where $w_{> j}:=s_{i_{j+1}}s_{i_{j+2}}\cdots s_{i_l}$. On the other hand, 
\[
x_0^{-1}x_l=(x_0^{-1}x_1)(x_1^{-1}x_2)\cdots (x_{l-1}^{-1}x_l)\in (N\overline{s}_{i_1}h_1N)(N\overline{s}_{i_2}h_2N)\cdots (N\overline{s}_{i_l}h_lN)=N\overline{w}\prod_{j=1}^lw_{>j}^{-1}(h_j)N.
\]
Thus $h_+(x_0N,x_lN)=\prod_{j=1}^lw_{>j}^{-1}(h_j)$ as well. 
\end{proof}

\begin{prop}\label{prop: independent of decoration} If $F$ is a face where a weighted chain $\eta$ passes through in the middle, i.e., it is neither the first face nor the last face, then merodromy $M^\eta$ only depends on the undecorated flag associated with $F$ and is independent of the decoration.
\end{prop}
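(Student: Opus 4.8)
The plan is to track exactly how $M^\eta$ changes when we alter the decoration on the single middle face $F$, and to show the net change is trivial. Write the crossings of $\eta$ with weave edges as $e_1,\dots,e_l$ of colors $s_{i_1},\dots,s_{i_l}$, let $x_0N,\dots,x_lN$ be the decorated flags of the successive segments, and let $\mu_0,\dots,\mu_l$ be the weights, so that by the definition of a weighted chain $\mu_{j-1}=s_{i_j}.\mu_j$, and by definition $M^\eta=\prod_{j=1}^l h_{\pm}(x_{j-1}N,x_jN)^{\mu_j}$. If $F$ carries the segment $k$ with $0<k<l$, then changing its decoration replaces $x_k$ by $x_kt$ for some $t\in T$, and this modifies only the two factors indexed by $e_k$ and $e_{k+1}$.

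The key computation is the transformation of an $h$-distance under multiplying one argument by a torus element. Using the Bruhat decomposition $x^{-1}y\in N\overline{w}hN$ (respectively $N\doverline{w}hN$) together with the facts that $T$ normalizes $N$ and that $\overline{w}^{-1}T\overline{w}=w^{-1}(T)$, I would record the two identities
\[
h_{\pm}(xN,ytN)=h_{\pm}(xN,yN)\cdot t,\qquad h_{\pm}(xtN,yN)=h_{\pm}(xN,yN)\cdot w^{-1}(t^{-1}),
\]
where $w=w(xB,yB)$ is the Tits distance; the first follows by absorbing $t$ on the right past $N$, the second by conjugating $t^{-1}$ across the fixed lift of $w$. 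These hold identically for $h_+$ and $h_-$, since the argument never uses which of $\overline{w},\doverline{w}$ appears, only that $T$ normalizes $N$; thus the possibly differing signs of the crossings at $e_k$ and $e_{k+1}$ are irrelevant.

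Applying this, the factor at $e_k$ (where $x_k$ sits in the right slot) is multiplied by $t^{\mu_k}$, while the factor at $e_{k+1}$ (where $x_k$ sits in the left slot, with Tits distance $s_{i_{k+1}}$) is multiplied by $\bigl(s_{i_{k+1}}(t^{-1})\bigr)^{\mu_{k+1}}$. Using the identity $(w.s)^{\mu}=s^{w^{-1}.\mu}$ and the involutivity $s_{i_{k+1}}^{-1}=s_{i_{k+1}}$, this last factor equals $t^{-s_{i_{k+1}}.\mu_{k+1}}$, which by the weighted-chain relation $s_{i_{k+1}}.\mu_{k+1}=\mu_k$ is exactly $t^{-\mu_k}$. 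Hence the two changes cancel, $t^{\mu_k}\cdot t^{-\mu_k}=1$, and $M^\eta$ is unchanged.

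The only delicate point is keeping the Weyl-group conventions consistent (whether a modification produces $w(t)$ or $w^{-1}(t)$, and the direction of $(w.s)^\mu=s^{w^{-1}.\mu}$). This is not a genuine obstacle here: the left-slot factor involves only the single simple reflection $s_{i_{k+1}}$, which is self-inverse, so the convention ambiguity disappears entirely, and the cancellation is forced purely by the defining compatibility $\mu_{j-1}=s_{i_j}.\mu_j$ of a weighted chain. This also explains the hypothesis that $F$ be a \emph{middle} face: for the first or last face only one of the two neighboring factors is present, so there is nothing to cancel against and the merodromy genuinely depends on that decoration.
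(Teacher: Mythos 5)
Your proposal is correct and follows essentially the same route as the paper: both arguments track how the two $h$-distance factors adjacent to $F$ transform under $x_k\mapsto x_kt$ (right factor picks up $t$, left factor picks up $s_{i_{k+1}}(t^{-1})$ by conjugating across the lift of the simple reflection), and both cancel the resulting $t^{\mu_k}\cdot t^{-\mu_k}$ using the weighted-chain compatibility $s_{i_{k+1}}.\mu_{k+1}=\mu_k$ together with the identity $\bigl(w(t)\bigr)^{\mu}=t^{w^{-1}.\mu}$. The paper phrases the computation in a local three-face picture while you index the full chain, but the content is identical.
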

\begin{proof} Suppose the two weave edges before and after the face $F$ are of colors $s_i$ and $s_j$ and suppose $\eta$ passes through faces with with decorated flags $x_0N$, $x_1N$, and $x_2N$ (see Figure \ref{fig: intermediate decoration}). Changing the decoration at $F$ is the same as changing the decorated flag $x_1N$ to $x_1tN$ for some $t\in T$. Under such a change, we have $h_\pm(x_0N, x_1tN)=th_\pm(x_0N,x_1N)$ and $h_\pm(x_1tN, x_2N)=s_j(t^{-1})h_\pm(x_1N,x_2N)$, where the signs in the subscript depend on the orientations of the weave edges. Suppose the weight associated with the middle portion of $\eta$ is $\mu$. Then the local contribution to the merodromy $M^\eta$ is 
\begin{align*}
h_\pm(x_0N,x_1tN)^\mu\cdot h_\pm(x_1tN,x_2N)^{s_j.\mu}=&(th_\pm(x_0N,x_1N))^\mu\cdot (s_j(t^{-1})h_\pm(x_1N,x_2N))^{s_j.\mu}\\
=& t^\mu 
\cdot h_\pm(x_0N,x_1N)^\mu\cdot t^{-\mu}\cdot h_\pm(x_1N,x_2N)^{s_j.\mu}\\
=& h_\pm(x_0N,x_1N)^\mu\cdot h_\pm(x_1N,x_2N)^{s_j.\mu}.
\end{align*}
This computation shows that the local contribution before and after the change of decoration are equal, and hence $M^\eta$ is invariant under the change of decorations on the intermediate faces.
\end{proof}
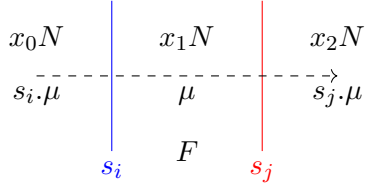
\begin{figure}[H]
    \centering
    \begin{tikzpicture}
        \draw [blue] (-1,-1) node [below] {$s_i$} -- (-1,1);
        \draw [red] (1,-1) node [below] {$s_j$} -- (1,1);
        \node at (0,0.5) [] {$x_1N$};
        \node at (2,0.5) [] {$x_2N$};
        \node at (-2,0.5) [] {$x_0N$};
        \node at (0,-1) [] {$F$};
        \draw [->, dashed] (-2,0) node [below] {$s_i.\mu$} -- node [below] {$\mu$} (2,0) node [below] {$s_j.\mu$};
    \end{tikzpicture}
    \caption{A weighted chain passing through a face.}
    \label{fig: intermediate decoration}
\end{figure}

\begin{prop} Suppose $F$ is a face that contains an interior endpoint $p$ for weighted chains $\eta_1,\eta_2,\dots, \eta_k$, each is oriented towards $p$ with weights $\mu_1,\mu_2,\dots, \mu_k$, respectively, such that $\mu_1+\mu_2+\cdots +\mu_k=0$. Then the total merodromy $\prod_{i=1}^k M^{\eta_k}$ only depends on the undecorated flag associated with $F$ and is independent of the decoration.
\end{prop}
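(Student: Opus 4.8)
The plan is to reduce everything to tracking how a single decoration change at $F$ propagates through each $M^{\eta_i}$, and then to invoke the balancing condition $\mu_1+\cdots+\mu_k=0$ to see that the changes cancel. First I would fix a decoration on every flag, write $x_F N$ for the chosen decorated flag at $F$, and study the effect of replacing $x_F N$ by $x_F t N$ for an arbitrary $t\in T$ while leaving all other decorations unchanged. Showing that the total merodromy is unaffected by this replacement is exactly the assertion that it depends only on the underlying undecorated flag at $F$.

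The first (and most delicate) step is to isolate the dependence of each $M^{\eta_i}$ on the decoration at $F$. Since every $\eta_i$ is oriented toward $p$, the face $F$ is its terminal face and $x_F N$ is its terminal decorated flag. By Proposition \ref{prop: independent of decoration}, any passage of $\eta_i$ through $F$ as an \emph{intermediate} face contributes nothing to the decoration dependence; hence the only place where the decoration at $F$ enters $M^{\eta_i}$ is through the terminal segment containing $p$ (the sources of the $\eta_i$ lying outside $F$, so the initial decorated flags are untouched). The main obstacle is precisely this bookkeeping: one must be sure that the unique decoration-dependent occurrence of $x_F N$ in each $M^{\eta_i}$ is the terminal one, so that all intermediate occurrences are correctly discarded via Proposition \ref{prop: independent of decoration}.

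Next I would compute how this terminal contribution changes, in two cases. If $\eta_i$ crosses at least one weave edge, its last local merodromy is $h_\pm(x'N,x_F N)^{\mu_i}$, where $x'N$ is the flag of the preceding segment and $\mu_i$ is the weight on the final segment; using the identity $h_\pm(x'N,x_F tN)=t\,h_\pm(x'N,x_F N)$ (established in the proof of Proposition \ref{prop: independent of decoration}) together with the fact that $\mu_i$ is a character of $T$, this local merodromy is multiplied by $t^{\mu_i}$, and therefore so is $M^{\eta_i}$. If instead $\eta_i$ lies entirely inside $F$, then $M^{\eta_i}=u^{\mu_i}$, where $u\in T$ is determined by $x_F N=(\text{initial decoration})\cdot u N$; replacing $x_F N$ by $x_F tN$ sends $u\mapsto ut$ and again multiplies $M^{\eta_i}$ by $t^{\mu_i}$. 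In either case the replacement sends $M^{\eta_i}\mapsto t^{\mu_i}M^{\eta_i}$ with the \emph{same} $t$ for every $i$, since they share the terminal face $F$.

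Finally, multiplying over all $i$ and invoking the balancing condition yields
\[
\prod_{i=1}^k M^{\eta_i}\;\longmapsto\;\Big(\prod_{i=1}^k t^{\mu_i}\Big)\prod_{i=1}^k M^{\eta_i}=t^{\sum_{i}\mu_i}\prod_{i=1}^k M^{\eta_i}=t^{0}\prod_{i=1}^k M^{\eta_i}=\prod_{i=1}^k M^{\eta_i},
\]
so the total merodromy is unchanged. As $t\in T$ was arbitrary, the total merodromy is independent of the decoration chosen at $F$ and hence depends only on the undecorated flag there. Once the isolation step is in place the balancing condition does all the remaining work, which is why I expect the only real difficulty to be the first paragraph's reduction.
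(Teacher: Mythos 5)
Your proposal is correct and takes essentially the same route as the paper: the paper's proof also changes the decoration at $F$ from $N$ to $tN$ and observes that the total merodromy is multiplied by $\prod_{i=1}^k t^{\mu_i}=t^{\sum_i\mu_i}=t^0=1$. Your case analysis (terminal segment with or without weave-edge crossings, plus discarding intermediate passages via Proposition \ref{prop: independent of decoration}) simply makes explicit the factor-of-$t^{\mu_i}$ claim that the paper asserts without comment.
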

\begin{proof} Without loss of generality we may assume that the decorated flag associated with the face $F$ is $N$. Now if we change it to $tN$ for some $t\in T$, the combined change to the total merodromy is 
\[
\prod_{i=1}^kt^{\mu_i}=t^{\sum_{i=1}^k\mu_i}=t^0=1.\qedhere
\]
\end{proof}

\begin{prop}\label{prop: merodromy invariant under homotopies} Merodromies are invariant under homotopies of weighted cycles. 
\end{prop}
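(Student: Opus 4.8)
The plan is to verify invariance locally, one homotopy move at a time, since the merodromy is defined as a product of local contributions and homotopies are generated by the local moves listed in the definitions together with ordinary path homotopy. The preceding propositions already dispose of most of the subtle points: Proposition \ref{prop: independent of decoration} shows merodromy is independent of the decoration chosen on any intermediate face, and the subsequent proposition shows the same for a face containing an interior endpoint. These independence results are exactly what make a move-by-move check legitimate, because after any homotopy I am free to recompute the merodromy using whatever decorations are convenient on the faces involved.

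\textbf{First} I would handle the moves that do not pass through a weave vertex. For a local path homotopy that does not cross any weave edge, there is nothing to check since no new local merodromy factors appear. For homotopy move (1), pulling a U-turn through an edge of color $s_i$, the weighted chain picks up two intersections with $e$ of opposite sign: one contributes $h_+(xN,yN)^\mu$ and the other contributes $h_-(yN,xN)^{s_i.\mu}$ (or vice versa), and by Lemma \ref{lem: signs of h-distances} together with the relation $\overline{s}_i=\doverline{s}_i\cdot(-1)^{\alpha_i^\vee}$ these two factors multiply to the identity. For move (2), reversing orientation and negating all weights, I would observe that reversing the chain swaps the roles of $x$ and $y$ at each crossing, which inverts each $h_\pm$, while negating $\mu$ inverts each exponent, so the two inversions cancel and the merodromy is unchanged. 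Move (3), a trivial loop, contributes $1$ by the remark that empty and contractible weighted cycles have merodromy $1$. The interior-endpoint moves (8)--(12) either introduce no new edge crossings (moves 8, 10, 11) or, as in move (12), move an endpoint through an edge in a way whose merodromy factors cancel by the same $h_\pm$ mechanism; in all of these the independence-of-decoration propositions guarantee consistency.

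\textbf{The heart of the argument}, and the step I expect to be the main obstacle, consists of moves (4), (5), and (6): homotoping a weighted chain through a trivalent, tetravalent, or hexavalent weave vertex. Here the chain crosses a different set of edges before and after the move, so I must show that two genuinely different products of $h_\pm$ factors agree. The key tool is Proposition \ref{prop: reduced word}: when the colors of the crossed edges spell a reduced word for a Weyl element $w$ and all intersections share a sign, the entire product collapses to a single $h_\pm(x_0N, x_\ell N)^\mu$ depending only on the two endpoint flags. For move (6) at a hexavalent vertex, both sides of the homotopy cross edges spelling the two reduced words $s_is_js_i$ and $s_js_is_j$ of the same element $w_0$ of the rank-two parabolic, so Proposition \ref{prop: reduced word} applies to each side and both equal $h_\pm(x_0N,x_3N)^\mu$ for the same initial and terminal flags; the braid relation on the lifts $\overline{w}$ (which holds because both $\overline{s}_i$ and $\doverline{s}_i$ satisfy the braid relations) ensures the $h$-distance is genuinely well defined on $w$. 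Move (5) at a tetravalent vertex is the commuting case $s_is_j=s_js_i$ and is handled identically but more simply. Move (4), homotoping through a trivalent vertex under the hypothesis $s_i.\mu=\mu$, requires a separate check: the chain gains or loses a pair of crossings with the two $s_i$-edges emanating from the vertex, and since $s_i.\mu=\mu$ the weight is unchanged across them and the relevant $h_\pm$ factors are raised to a power fixed by $s_i$, which one computes to be trivial.

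\textbf{I would organize} the write-up so that the reduced-word proposition does the heavy lifting for (5) and (6), reducing each to a statement that the $h_\pm$ distance of a Weyl group element is independent of the chosen reduced word, and leave the sign bookkeeping via Lemma \ref{lem: signs of h-distances} for (1), (2), and (4). The subtle point to watch is that when not all intersections along a reduced word share the same sign, Proposition \ref{prop: reduced word} does not directly apply; in that situation I would first use move (1) to cancel any oppositely-signed adjacent crossings, or equivalently insert compensating $(-1)^{\alpha_i^\vee}$ factors tracked by Lemma \ref{lem: signs of h-distances}, before invoking the reduced-word collapse. Assembling these local verifications, every generator of the homotopy equivalence preserves $M^\eta$, and since merodromy is multiplicative over the weighted chains comprising a weighted cycle, invariance for weighted cycles follows.
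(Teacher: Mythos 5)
Your overall strategy --- a move-by-move check with Proposition \ref{prop: reduced word} handling moves (5) and (6) and sign bookkeeping handling (1) and (2) --- is exactly the paper's, and those parts of your argument are sound. But your treatment of move (4) has a genuine gap. You say the chain ``gains or loses a pair of crossings'' whose $h_\pm$ factors are ``raised to a power fixed by $s_i$, which one computes to be trivial.'' In fact the two sides of move (4) have \emph{different numbers} of crossings (two $s_i$-edges on one side versus one on the other), so no pairwise cancellation is available, and the reduced-word proposition cannot help because $s_is_i$ is not a reduced word. Writing $h_1,h_2$ for the $h$-distances across the two upper edges and $h_3$ for the one across the lower edge, the needed identity is $(h_1h_2)^\mu=h_3^\mu$, and the element $h_1h_2h_3^{-1}$ is \emph{not} trivial: the paper must invoke the explicit rank-one identity $\overline{s}_i^{-1}x_i(a)=x_i(-a^{-1})(-a)^{-\alpha_i^\vee}y_i(a^{-1})$ to show that this discrepancy equals $(p-q)^{\alpha_i^\vee}$, i.e.\ lies in the image of the coroot $\alpha_i^\vee$; only then does $\inprod{\alpha_i^\vee}{\mu}=0$ (the consequence of $s_i.\mu=\mu$) kill it upon exponentiation. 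Your sketch names the right hypothesis but omits the computation that is the actual content of this case.

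Two smaller omissions. First, you never address move (7), which slides a boundary endpoint along a boundary interval, in particular past an external weave edge; its verification is not formal, since it needs the framing condition of Definition \ref{defn: framed moduli space}, which forces the $h$-distance across an external weave edge to be $1$ (the decorated flags satisfy $x^{-1}x'\in N\overline{s}_iN$ with no torus factor), so the newly created crossing contributes trivially to the merodromy. Second, for move (12) your phrase ``factors cancel by the same $h_\pm$ mechanism'' misses the point: one side has a single crossing contributing $h^{\mu_1}$ while the other has $k-1$ crossings contributing $h^{-\mu_2-\cdots-\mu_k}$, and their equality is precisely the balancing condition $\mu_1+\mu_2+\cdots+\mu_k=0$ at the interior endpoint, not a cancellation of $h_\pm$ factors against each other.
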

\begin{proof} Since all contributions to merodromies are local, we just need to go through the list of homotopy moves (see Figures \ref{fig: homotopies} and \ref{fig: interior endpoint through a weave edge}) and prove the invariance.

Homotopy move (0) can be achieved by first adding an interior endpoint (homotopy move (8)), then pushing this interior endpoint through the weave edge (homotopy move (12)), and then deleting the interior endpoint (homotopy move (8)) again. Thus, its homotopy invariance follows from the homotopy invariance of other moves.

For the homotopy move (1), let us assume without loss of generality that the weave edge is oriented downward. Let us suppose the decorated flags are $xN$ on the left and $yN$ on the right with $h_+(xN,yN)=h$, i.e., $x^{-1}y\in N\overline{s}_ihN$. Then the merodromy on the LHS is $h^{s_i.\mu}$. Note that 
\[
y^{-1}x\in Nh^{-1}\overline{s}_i^{-1}N=Nh^{-1}\doverline{s}_iN=N\doverline{s}_i(s_i(h^{-1}))N.
\]
Thus, the merodromy on the RHS is
$(s_i(h^{-1}))^{-\mu}=h^{s_i.\mu}$ as well.

For the homotopy move (2), the initial decorated flag and terminal decorated flag of the weighted chain are the same and hence $M^\eta=1$.

For the homotopy move (3), we observe that it is almost the composition of homotopy moves (0) and (2), except that one of the two merodromies is computed with the opposite $h$ distance (i.e., $h_\mp$ instead of $h_\pm$). It then follows from Lemma \ref{lem: signs of h-distances} that this homotopy move results in a sign $((-1)^{\alpha_i^\vee})^\mu=(-1)^{\inprod{\alpha_i^\vee}{\mu}}$.

For the homotopy move (4), since the picture has a rotational symmetry, we may assume without loss of generality that all the weave edges are oriented downward. Furthermore, we may first use the diagonal $G$-action to move the decorated flags so that the decorated flag associated with the left face is $N$; we then apply Proposition \ref{prop: independent of decoration} to assume that the decorated flag associated with the top face is of the form $x_i(p)\overline{s}_iN$; under this setup, the merodromy across the left weave edge is $1$. The decorated flag associated with the right face is of the form $x_i(q)\overline{s}_ihN$ for some $h\in T$ and $q\neq p$, and as a result, the merodromy across the bottom weave edge is $h_-(x_i(q)\overline{s}_ihN, N)^\mu=s_i(h^{-1})^{\mu}=h^{-\mu}$ (note that $s_i.\mu=\mu$ by assumption). The remaining merodromy across the right weave edge is $h_+(x_i(p)\overline{s}_iN, x_i(q)\overline{s}_ihN)^{\mu}$. By applying the identity
\[
\overline{s}_i^{-1}x_i(a)=x_i(-a^{-1})(-a)^{-\alpha_i^\vee}y_i(a^{-1}),
\]
we note that
\[
N\overline{s}_i^{-1}x_i(p)^{-1}x_i(q)\overline{s}_ihN=N\overline{s}_i^{-1}x_i(q-p)\overline{s}_ihN=N(p-q)^{-\alpha_i^\vee}\overline{s}_ihN=N\overline{s}_i(p-q)^{\alpha_i^\vee}hN.
\]
Since $s_i.\mu=\mu$, we know that $\inprod{\alpha_i^\vee}{\mu}=0$. Thus, we can conclude that the merodromy across the right weave edge is
\[
((p-q)^{\alpha_i^\vee}h)^{\mu}=h^\mu.
\]
Thus, the total merodromy is $1\cdot h^{-\mu}\cdot h^\mu=1$.

For the homotopy moves (5) and (6), we can add bivalent interior endpoints on the left-most and the right-most points of the circle to break the circle into two semicircular arcs. Note that by definition, reversing the orientation on a weighted chain without changing the weights results in a merodromy that is the inverse of the original. Thus, the invariance of merodromies for homotopy moves (5) and (6) is equivalent to showing that the merodromy across the upper semicircular arc is equal to the merodromy across the lower semicircular arc, which follows straight from Proposition \ref{prop: reduced word}.

For the homotopy move (7), it suffices to notice that the merodromy from the intersection on the RHS is $1$ due to the condition on the decorated flags along the boundary (Definition \ref{defn: framed moduli space}).

Homotopy moves (8)-(11) do not involve any non-trivial contribution to merodromies and there is nothing to show.

Lastly, for the homotopy move (12), let us assume without loss of generality that the weave edge is oriented downward; the other orientation can be verified similarly with the $h$ distance $h_\pm$ replaced by $h_\mp$. Let $xN$ be the decorated flag on the left of the weave edge and let $yN$ be the decorated flag on the right of the weave edge. Suppose $x^{-1}y\in N\overline{s}_ihN$. Then on the one hand, the merodromy on the LHS is $h^{\mu_1}$. On the other hand, since $y^{-1}x\in Nh^{-1}\overline{s}_i^{-1}N=N\doverline{s}_is_i(h^{-1})N$, the merodromy on the RHS is
\[
(s_i(h^{-1}))^{s_i.\mu_2+\cdots+s_i.\mu_k}=h^{-\mu_2-\cdots-\mu_k}=h^{\mu_1}. \qedhere
\]
\end{proof}

As a result of Proposition \ref{prop: merodromy invariant under homotopies}, we can view merodromies as an algebra homomorphism
\[
M^\bullet:\cW(\ww)\longrightarrow \cO(\cM_\fr(\ww)).
\]

Recall that within the weighted cycle algebra, there is a subalgebra $\cW_\abs(\ww)$ generated by weighted absolute cycles. From the projection map $\cM_\fr(\ww)\rightarrow \cM(\ww)$, we also get a subalgebra $\cO(\cM(\ww))\subset \cO(\cM_\fr(\ww))$. 

\begin{cor}\label{cor: abs weighted cycle algebra merodromy} The merodromy homomorphism $M^\bullet:\cW(\ww)\rightarrow \cO(\cM_\fr(\ww))$ restricts to a homomorphism
\[
M^\bullet:\cW_\abs(\ww)\longrightarrow \cO(\cM(\ww)).
\]
\end{cor}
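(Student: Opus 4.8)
The plan is to show that for a weighted absolute cycle $\eta$ the merodromy $M^\eta$ descends along the forgetful map $\cM_\fr(\ww)\rightarrow\cM(\ww)$, i.e., that it is independent of the framing data (the decorated flags attached to the boundary intervals). Since $\cO(\cM(\ww))$ sits inside $\cO(\cM_\fr(\ww))$ precisely as the functions pulled back along this projection, this yields $M^\eta\in\cO(\cM(\ww))$; the statement for all of $\cW_\abs(\ww)$ then follows at once, because $\cW_\abs(\ww)$ is generated by absolute cycles and $M^\bullet$ is an algebra homomorphism.

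First I would locate exactly where the framing enters the definition of $M^\eta$. Tracing the construction, a boundary decorated flag is invoked only as the initial or terminal decorated flag of a weighted chain whose source or target lies on the boundary of the disk; every crossing contribution $h_\pm(xN,yN)^\mu$ and every interior-endpoint contribution uses only the decorations freely assigned to the faces. By definition an absolute cycle has a representative all of whose chains terminate at interior balancing endpoints rather than on the boundary, so for such a representative the framing decorations are simply never called upon. Hence $M^\eta$ depends only on the undecorated flags on the faces together with the freely chosen face decorations.

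Next I would remove the dependence on those chosen face decorations by invoking the two decoration-independence results already established. Fixing one face $F$ and changing its decoration by $t\in T$, the resulting change in $M^\eta$ factors over the ways $\eta$ meets $F$: each time a chain passes through $F$ as an intermediate face the combined contribution of the two bounding crossings is invariant by Proposition \ref{prop: independent of decoration}, while the chains terminating at a balancing interior endpoint inside $F$ contribute a combined factor $t^{\sum_i\mu_i}=t^0=1$ exactly as in the proposition immediately following Proposition \ref{prop: independent of decoration}. As these contributions multiply and each is separately trivial, $M^\eta$ is unchanged; letting $F$ range over all faces shows $M^\eta$ depends only on the image point in $\cM(\ww)$, so it is constant on the fibers of $\cM_\fr(\ww)\rightarrow\cM(\ww)$ and the corollary follows.

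I expect the main obstacle to be the bookkeeping in this last step. A single face of an absolute representative may simultaneously carry several through-going segments and several balancing endpoints, so I must check that a decoration change at $F$ really decomposes into the separately-invariant pieces governed by the two propositions; in particular I should verify that the first and last faces of each chain — the cases Proposition \ref{prop: independent of decoration} explicitly excludes — are exactly the faces hosting the interior endpoints, so that they are covered by the balancing argument rather than left unaccounted for. Once this decomposition is confirmed, constancy on the fibers is immediate.
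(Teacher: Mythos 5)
Your proposal is correct and follows essentially the same route as the paper: the key observation in both is that a weighted absolute cycle has a representative with no boundary endpoints, so the framing data (boundary decorated flags) never enters the merodromy, which therefore descends along $\cM_\fr(\ww)\rightarrow\cM(\ww)$. Your additional bookkeeping about face decorations (via Proposition \ref{prop: independent of decoration} and the balancing-endpoint proposition) is sound but already built into the well-definedness of $M^\bullet$ on $\cO(\cM_\fr(\ww))$, so the paper dispenses with it.
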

\begin{proof} Since weighted absolute cycles do not have endpoints along the boundary of the disk, the framing data of decorated flags along the boundary of the disk do not affect the merodromies along weighted absolute cycles. This shows that the merodromy homomorphism does restrict to a homomorphism from $\cW_\abs(\ww)$ to $\cO(\cM(\ww))$.
\end{proof}

Note that by construction, merodromies along weighted cycles depend on the choice of compatible orientation on the weave edges. Next, let us discuss how different choices of compatible orientations may affect merodromies.

\begin{defn} A \emph{dissecting path} on a weave $\ww$ is a simple curve consisting of consecutive weave edges such that
\begin{itemize}
    \item at a tetravalent weave vertex on its path, the two weave edges are the $i$th and the $(i+2)$nd edges (modulo $4$) with respect to any cyclic orientation on the incident weave edges;
    \item at a hexavalent weave vertex on its path, the two weave edges are the $i$th and the $(i+3)$rd edges (modulo $6$) with respect to any cyclic orientation on the incident weave edges;
    \item it is maximal with respect to inclusion.
\end{itemize}
Note that a dissecting path either forms a closed loop or travels across the disk, and hence it cuts the disk into two connected components.
\end{defn}

\begin{lem} Any two choices of compatible orientations on the same weave differ by reversing orientations on a collection of dissecting paths.
\end{lem}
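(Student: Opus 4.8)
The plan is to start from two compatible orientations $O_1$ and $O_2$ and to analyze the set $D\subseteq E(\ww)$ of weave edges on which $O_1$ and $O_2$ disagree. Reversing the orientation of every edge in $D$ is precisely the operation that turns $O_1$ into $O_2$, so it suffices to prove that $D$ is a disjoint union of disecting paths. I would obtain this by first pinning down the local structure of $D$ at each weave vertex, and then assembling the local pieces into global disecting paths by a tracing argument.

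For the local analysis, fix a vertex $v$ and encode a local orientation by its set $I$ of incoming edges at $v$. Compatibility (Definition \ref{defn: compatible orientations}) says exactly that $I$ is a cyclically consecutive block of edges of size $2$ at a trivalent vertex, size $2$ at a tetravalent vertex, and size $3$ at a hexavalent vertex. Writing $I_1,I_2$ for the incoming sets of $O_1,O_2$ at $v$, the edges of $D$ incident to $v$ form the symmetric difference $I_1\triangle I_2$, which in particular always has even cardinality. At a trivalent vertex, $I_1$ and $I_2$ are two-element subsets of a three-element set, so $I_1\triangle I_2$ is either empty or consists of two edges that are joined through $v$, i.e. they form a turn. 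At a tetravalent or hexavalent vertex I would label the edges cyclically by $\mathbb{Z}/2m$ with $m=2$ or $m=3$; since $I$ is an arc of length $m$ inside a cycle of length $2m$, it contains exactly one edge out of every antipodal pair $\{j,j+m\}$. Hence each indicator satisfies the property that $j$ is incoming if and only if $j+m$ is outgoing, so that $j\in D$ if and only if $j+m\in D$. This shows that at tetravalent and hexavalent vertices the edges of $D$ always come in antipodal (opposite) pairs.

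With the local picture in hand, I would match the $D$-edges at every interior vertex into pairs using exactly the disecting rule: the unique available pair (a turn) at a trivalent vertex, and the antipodal pairs at tetravalent and hexavalent vertices. This produces a perfect matching of the $D$-half-edges at each interior vertex, so that every edge of $D$ has a well-defined successor at each of its two endpoints. Tracing through these matchings decomposes $D$ into edge-disjoint simple curves, each of which travels along opposite edges at tetravalent and hexavalent vertices and turns at trivalent vertices, and which can only terminate by running into an external edge at the boundary of the disk, since an interior vertex always supplies a continuing partner. Each such curve is therefore a maximal disecting path, either a closed loop or a path crossing the disk, and together they cover every edge of $D$ exactly once. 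Reversing this collection of disecting paths flips precisely the edges of $D$ and hence sends $O_1$ to $O_2$, which is the assertion of the lemma.

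The only genuinely delicate point I anticipate is the local analysis at tetravalent and hexavalent vertices where all of the incident edges lie in $D$: there two (respectively three) disecting paths cross at $v$, and one must check that the antipodal matching still assigns each edge to a unique path, so that no edge is reversed twice. The antipodal identity ``$j\in D$ iff $j+m\in D$'' handles this cleanly, since it forces the $D$-edges into disjoint opposite pairs regardless of how many of them are present at $v$. Everything else is bookkeeping: verifying that the traced curves are maximal because they stop only at the boundary, and that flipping them realizes the symmetric difference $D$.
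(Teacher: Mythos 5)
Your proof is correct, and it rests on the same local observation as the paper's proof---namely that disagreements between two compatible orientations propagate in pairs through every interior vertex, and antipodally through tetravalent and hexavalent vertices---but it is organized quite differently. The paper argues by induction on the number of disagreeing edges: it traces a single disecting path inside the disagreement set, reverses one orientation along that path, and invokes the inductive hypothesis on the resulting pair. You instead characterize the full disagreement set $D$ in one pass (as the symmetric difference $I_1\triangle I_2$ of the incoming sets at each vertex), prove it is closed under the turn/antipodal matching, and decompose $D$ globally into edge-disjoint disecting paths that are all reversed simultaneously. This buys you something genuine: the paper's inductive step implicitly requires that reversing a compatible orientation along a \emph{single} disecting path yields again a compatible orientation, and that claim is delicate. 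For instance, at a hexavalent vertex with incoming sets $I_1=\{1,2,3\}$ and $I_2=\{3,4,5\}$ (edges labeled cyclically), the disagreement set is $\{1,4\}\cup\{2,5\}$, and reversing only the pair $\{2,5\}$ in the first orientation produces the incoming set $\{1,3,5\}$, which is not an adjacent triple; the intermediate orientation is then not compatible, so the induction goes through only if the paths are peeled off in a suitable order. Your simultaneous reversal never examines intermediate orientations, so it sidesteps this issue entirely, at the cost of the extra bookkeeping of the global matching. One point that both your argument and the paper's leave implicit is that each traced curve never revisits a tetravalent or hexavalent vertex: you correctly resolve which edge belongs to which path when several strands cross at one vertex, but if a single strand returned to the same vertex it would fail to be a \emph{simple} curve as the definition of disecting path demands. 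Since the paper's own tracing argument makes the same tacit assumption, this is a shared technicality rather than a gap in your proposal.
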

\begin{proof} Let us do an induction on the number $n$ of weave edges along which the two choices of compatible orientations disagree. There is nothing to show for the base case $n=0$. Now let $e$ be a weave edge along which the two choices disagree. Then at each of the endpoints of $e$, there must be another edge where the two choices disagree; in particular, if the endpoint is a tetravalent or a hexavalent weave vertex, the weave edge opposite to $e$ must have disagreeing orientations as well. Thus we can extend $e$ into a path consisting of consecutive weave edges. Eventually this process has to stop, either when the path closes itself up into a loop or goes to the boundary of the disk, and thus giving us a dissecting path. Now reversing one choice of compatible orientations along this dissecting path gives us another choice of compatible orientations with fewer disagreeing edges compared to the other choice, and the proof is complete by induction.
\end{proof}

\begin{prop} Let us fix a choice of compatible orientations on $\ww$. Let $\eta$ be a weighted cycle with merodromy $M_1^\eta$. Let $\gamma$ be a dissecting path and let $M_2^\eta$ be the merodromy along $\eta$ with respect to the choice of compatible orientations obtained by reversing the edges in $\gamma$. Then $M_2^\eta=(-1)^{\inprod{\gamma}{\eta}}M_1^\eta$.
\end{prop}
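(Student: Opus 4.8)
The plan is to treat the disecting path $\gamma$ as the function on $E(\ww)$ with multiplicity $1$ on each of its edges and $0$ elsewhere, so that $\inprod{\gamma}{\eta}$ is read off from Definition \ref{defn: intersection number}, and then to exploit the fact that both the merodromy (for a fixed orientation) and $\inprod{\gamma}{\eta}$ are assembled from purely local data. Reversing the orientations along $\gamma$ changes only two families of local factors of the merodromy: the factors $M_p$ at transverse intersections $p$ of $\eta$ with an edge of $\gamma$, whose $h_\pm$ subscript is controlled by the now-reversed sign of the crossing; and the factors dictated by the framing condition of Definition \ref{defn: framed moduli space} along the (at most two) external edges in which $\gamma$ terminates. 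Every other local contribution, in particular every crossing of $\eta$ with a weave edge outside $\gamma$, is untouched, since the orientations of those edges are unchanged. It therefore suffices to match each of these two families of changes with the corresponding summand of $\inprod{\gamma}{\eta}$.

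First I would handle the interior crossings. Fix a crossing $p$ of $\eta$ with an edge $e\in\gamma$ of color $s_i$, let $xN,yN$ be the decorated flags on the two sides and let $\mu$ be the weight of $\eta$ after $p$, so that $M_p=h_{\pm}(xN,yN)^\mu$. Reversing the orientation of $e$ flips the sign of the crossing and hence interchanges $h_+$ and $h_-$, so by Lemma \ref{lem: signs of h-distances} the factor $M_p$ is multiplied by
\[
\left((-1)^{\alpha_i^\vee}\right)^\mu=(-1)^{\inprod{\alpha_i^\vee}{\mu}}=(-1)^{(\alpha_i,\mu)}=(-1)^{\inprod{e}{\eta}_p},
\]
the middle equality holding because $\inprod{\alpha_i^\vee}{\omega_j}=\delta_{ij}=(\alpha_i,\omega_j)$ forces $\inprod{\alpha_i^\vee}{\mu}$ and $(\alpha_i,\mu)$ to both equal the $\omega_i$-coordinate of $\mu$. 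Taking the product over all such $p$ reproduces $(-1)^{\sum_{e\in\gamma}\sum_p\inprod{e}{\eta}_p}$, which is the interior part of $(-1)^{\inprod{\gamma}{\eta}}$.

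Next I would treat the boundary. If $\gamma$ is a closed loop it has no external edges and the claim follows from the interior computation alone; otherwise $\gamma$ crosses the disk and ends in two external edges. At such an external edge $e$ of color $s_i$ the framing relation between the two adjacent boundary faces toggles between $x^{-1}x'\in N\overline{s}_iN$ and $x^{-1}x'\in N\doverline{s}_iN$, and since $\overline{s}_i=\doverline{s}_i(-1)^{\alpha_i^\vee}$ the two framed moduli spaces are identified by right-multiplying the boundary decorated flag on one side of $e$ by $(-1)^{\alpha_i^\vee}$. I would then track how this twist propagates: for a chain of $\eta$ ending at a point $q$ whose face is separated from $e$ along the boundary interval $C$ by external edges $s_{i_1},\dots,s_{i_k}$, the intervening framing relations conjugate the twist by the corresponding Weyl element, so that the induced change of merodromy is $(-1)$ raised to $I_C(\alpha_i,\mu)$, up to the sign bookkeeping recorded by $\sgn(q)\sgn_C(e,\eta)$; the factor $\tfrac12$ in $\inprod{e}{\eta}_C$ reflects that the twist is shared symmetrically between the two boundary faces meeting at $e$. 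Using Proposition \ref{prop: independent of decoration} to discard the chains that merely pass through, and the homotopy-invariance of both $M^\eta$ and $\inprod{\gamma}{\eta}$ to put the endpoints in generic position, this matches $(-1)^{\sum_{e\in\gamma}\sum_C\inprod{e}{\eta}_C}$, and combining with the interior part yields $M_2^\eta=(-1)^{\inprod{\gamma}{\eta}}M_1^\eta$.

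The main obstacle is the boundary step: the interior computation is a one-line application of Lemma \ref{lem: signs of h-distances}, whereas the boundary analysis requires carefully propagating the $(-1)^{\alpha_i^\vee}$ framing twist through the intervening external edges—where its Weyl conjugation produces exactly the twisted pairing $I_C$—and verifying that the signs assemble with the correct $\tfrac12$ and orientation conventions. A useful intermediate observation I would first record is that, after pushing a boundary endpoint of $\eta$ through $e$ by homotopy move (7), the merodromy of the newly created crossing equals $1$ for the framing-matching orientation but acquires the factor $(-1)^{\alpha_i^\vee}$ for the reversed one; this exhibits the single-edge boundary contribution as an avatar of the interior computation and isolates the Weyl-twist bookkeeping as the only genuinely new ingredient to push through.
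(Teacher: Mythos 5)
Your first half is, essentially verbatim, the paper's entire proof. The paper's argument consists of exactly two steps: apply Lemma \ref{lem: signs of h-distances} at each crossing of $\eta$ with $\gamma$, so that reversing the edge swaps $h_+$ with $h_-$ and multiplies the local merodromy factor by $(-1)^{\inprod{\alpha_{i_k}^\vee}{\mu_k}}$, and then assert that $\sum_k \inprod{\alpha_{i_k}^\vee}{\mu_k}$ is precisely the intersection number $\inprod{\gamma}{\eta}$. The paper makes no case distinction between closed disecting loops and paths that run from boundary to boundary, and it tacitly compares $M_1^\eta$ and $M_2^\eta$ on one and the same configuration of decorated flags, so that the only thing that changes is which $h$-distance is read off at each crossing.

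Everything in your second half therefore has no counterpart in the paper's proof. Your instinct is sound: when the disecting path terminates in external edges, reversing them changes the framing conditions of Definition \ref{defn: framed moduli space}, hence the space $\cM_\fr(\ww)$ on which $M_2^\eta$ is defined, and the boundary terms $\inprod{e}{\eta}_C$ of Definition \ref{defn: intersection number} are generally nonzero --- this is a genuine subtlety that the paper simply elides. But your treatment of it is a plan rather than a proof, and the gap is real. The identification of the two framed moduli spaces is not canonical: the $(-1)^{\alpha_i^\vee}$ twist needed to restore the framing condition at a reversed external edge can be propagated along the boundary interval to either side (it is only stopped by base points), different choices change which chains of $\eta$ pick up signs, and the formalism contains no mechanism by which the twist is ``shared symmetrically between the two boundary faces,'' so the factor $\tfrac12$ in $\inprod{e}{\eta}_C$ is not actually accounted for by anything you wrote. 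Until that bookkeeping is carried out explicitly --- choice of identification, propagation through the Weyl conjugations, comparison with $\sgn(q)\sgn_C(e,\eta)I_C(\alpha_i,\mu)$ summed over both ends of $\gamma$ --- the boundary step remains open; it is not a routine variant of the interior computation. If your goal is to reproduce the paper's own argument, the interior computation plus the identification of the exponent with $\inprod{\gamma}{\eta}$ (immediate for closed disecting loops, where no framing issue arises and no boundary terms occur) is all that is expected, and the boundary analysis should either be completed honestly or flagged as an assumption, not presented as established.
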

\begin{proof} By Lemma \ref{lem: signs of h-distances}, the two $h$-distances between two decorated flags separated by a weave edge of color $s_i$ differ by $(-1)^{\alpha_i^\vee}$. Therefore we may conclude that $M_2^\eta=M_1^\eta\prod_k (-1)^{\inprod{\alpha_{i_k}^\vee}{\mu_k}}$, where $k$ runs through all intersection points between $\eta$ and $\gamma$, $\mu_k$ is the weight on $\eta$ right after the intersection point $k$, and $s_{i_k}$ is the color of the weave edge at the intersection point $k$. The proposition then follows from the fact that $\sum_k \inprod{\alpha_{i_k}^\vee}{\mu_k}$ is precisely the intersection number $\inprod{\gamma}{\eta}$.
\end{proof}

\begin{exmp} Below is an example of two choices of compatible orientations on the same $\SL_3$-weave together with the same weighted absolute cycle whose merodromies would differ by a factor of $-1$.
\begin{figure}[H]
    \centering
    \begin{tikzpicture}
        \draw [blue,decoration={markings, mark=at position 0.5 with {\arrow{>}}}, postaction={decorate}] (-1.5,0) -- (0,0);
        \draw [blue,decoration={markings, mark=at position 0.3 with {\arrow{>}}}, postaction={decorate}] (-1.5,0) ++ (120:1) -- (-1.5,0);
        \draw [blue,decoration={markings, mark=at position 0.3 with {\arrow{>}}}, postaction={decorate}] (-1.5,0) ++ (-120:1) -- (-1.5,0);
        \draw [red,decoration={markings, mark=at position 0.3 with {\arrow{>}}}, postaction={decorate}] (120:1) -- (0,0);
        \draw [red,decoration={markings, mark=at position 0.3 with {\arrow{>}}}, postaction={decorate}] (-120:1) -- (0,0);
        \draw [blue,decoration={markings, mark=at position 0.8 with {\arrow{>}}}, postaction={decorate}] (0,0) -- (60:1);
        \draw [blue,decoration={markings, mark=at position 0.8 with {\arrow{>}}}, postaction={decorate}] (0,0) -- (-60:1);
        \draw [red,decoration={markings, mark=at position 0.5 with {\arrow{>}}}, postaction={decorate}] (0,0) -- (1.5,0);
        \draw [red,decoration={markings, mark=at position 0.3 with {\arrow{>}}}, postaction={decorate}] (1.5,0) ++ (60:1) -- (1.5,0);
        \draw [red,decoration={markings, mark=at position 0.8 with {\arrow{>}}}, postaction={decorate}] (1.5,0) -- ++ (-60:1);
        \draw [dashed, ->, decoration={markings, mark=at position 0.5 with {\arrow{>}}}, postaction={decorate}] (-1,0.5) -- (1.5,0.5) arc (90:-90:0.5) -- (-1.5,-0.5) arc (-90:-270:0.5) -- (-1,0.5);
        \node at (-2,0) [left] {$s_1.\omega_1$};
        \node at (-1,0.5) [above] {$\omega_1$};
        \node at (0,0.5)[above] {$\omega_1$};
        \node at (1,0.5) [above] {$s_1.\omega_1$};
        \node at (2,0) [right] {$w_0.\omega_1$};
        \node at (1,-0.5) [below] {$s_1.\omega_1$};
        \node at (0,-0.5) [below] {$\omega_1$};
        \node at (-1,-0.5) [below] {$\omega_1$};
    \end{tikzpicture} \hspace{1cm}
    \begin{tikzpicture}
        \draw [blue,decoration={markings, mark=at position 0.5 with {\arrow{>}}}, postaction={decorate}] (-1.5,0) -- (0,0);
        \draw [blue,decoration={markings, mark=at position 0.3 with {\arrow{>}}}, postaction={decorate}] (-1.5,0) ++ (120:1) -- (-1.5,0);
        \draw [blue,decoration={markings, mark=at position 0.3 with {\arrow{>}}}, postaction={decorate}] (-1.5,0) ++ (-120:1) -- (-1.5,0);
        \draw [red,decoration={markings, mark=at position 0.2 with {\arrow{<}}}, postaction={decorate}] (120:1) -- (0,0);
        \draw [red,decoration={markings, mark=at position 0.3 with {\arrow{>}}}, postaction={decorate}] (-120:1) -- (0,0);
        \draw [blue,decoration={markings, mark=at position 0.8 with {\arrow{>}}}, postaction={decorate}] (0,0) -- (60:1);
        \draw [blue,decoration={markings, mark=at position 0.8 with {\arrow{<}}}, postaction={decorate}] (0,0) -- (-60:1);
        \draw [red,decoration={markings, mark=at position 0.5 with {\arrow{>}}}, postaction={decorate}] (0,0) -- (1.5,0);
        \draw [red,decoration={markings, mark=at position 0.3 with {\arrow{>}}}, postaction={decorate}] (1.5,0) ++ (60:1) -- (1.5,0);
        \draw [red,decoration={markings, mark=at position 0.8 with {\arrow{>}}}, postaction={decorate}] (1.5,0) -- ++ (-60:1);
        \draw [dashed, ->, decoration={markings, mark=at position 0.5 with {\arrow{>}}}, postaction={decorate}] (-1,0.5) -- (1.5,0.5) arc (90:-90:0.5) -- (-1.5,-0.5) arc (-90:-270:0.5) -- (-1,0.5);
        \node at (-2,0) [left] {$s_1.\omega_1$};
        \node at (-1,0.5) [above] {$\omega_1$};
        \node at (0,0.5)[above] {$\omega_1$};
        \node at (1,0.5) [above] {$s_1.\omega_1$};
        \node at (2,0) [right] {$w_0.\omega_1$};
        \node at (1,-0.5) [below] {$s_1.\omega_1$};
        \node at (0,-0.5) [below] {$\omega_1$};
        \node at (-1,-0.5) [below] {$\omega_1$};
    \end{tikzpicture}
    \caption{Two choices of compatible orientations such that the merodromies along the same weighted cycle differ by a sign.}
\end{figure}
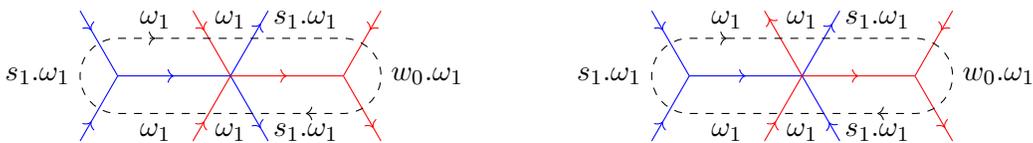
\end{exmp}

\begin{rmk}Let us discuss the significance of introducing bivalent weave vertex for computing merodromies. First, let us observe that without bivalent weave vertices, there exist weaves that do not admit any choices of compatible orientations (e.g., the left picture in Figure \ref{fig: no compatible orientations}). In addition, although weave equivalences are local moves (Definition \ref{defn: weave equivalences}), there is no guarantee that one can adjust a compatible choice of orientations locally to obtain a compatible choice of orientations after a weave equivalence (e.g., the right picture in Figure \ref{fig: no compatible orientations}). 

\begin{figure}[H]
    \centering
    \begin{tikzpicture}[baseline=0]
        \draw [blue] (0,0) circle [radius=1];
        \draw [blue] (-1,0) -- (1,0);
    \end{tikzpicture} \hspace{4cm}
    \begin{tikzpicture}[baseline=0]
    \draw [blue, decoration={markings, mark=at position 0.5 with {\arrow{>}}}, postaction={decorate}] (-1,-1) -- (-0.5,0);
    \draw [blue, decoration={markings, mark=at position 0.5 with {\arrow{>}}}, postaction={decorate}](-1,1) -- (-0.5,0);
    \draw [blue, decoration={markings, mark=at position 0.5 with {\arrow{>}}}, postaction={decorate}] (-0.5,0) -- (0.5,0);
    \draw [red, decoration={markings, mark=at position 0.5 with {\arrow{>}}}, postaction={decorate}] (0,1) -- (0.5,0);
    \draw [red, decoration={markings, mark=at position 0.5 with {\arrow{>}}}, postaction={decorate}] (0,-1) -- (0.5,0);
    \draw [blue, decoration={markings, mark=at position 0.5 with {\arrow{>}}}, postaction={decorate}] (0.5,0) -- (1,-1);
    \draw [blue, decoration={markings, mark=at position 0.5 with {\arrow{>}}}, postaction={decorate}] (0.5,0) -- (1,1);
    \draw [red, decoration={markings, mark=at position 0.5 with {\arrow{>}}}, postaction={decorate}] (0.5,0) -- (1,0);
    \end{tikzpicture} \quad ${\longleftrightarrow}$ \begin{tikzpicture}[baseline=0]
    \draw [blue, decoration={markings, mark=at position 0.5 with {\arrow{>}}}, postaction={decorate}] (-1,-1) -- (0,-0.5);
    \draw[blue, decoration={markings, mark=at position 0.5 with {\arrow{>}}}, postaction={decorate}] (0,-0.5) -- (1,-1);
    \draw [blue, decoration={markings, mark=at position 0.5 with {\arrow{>}}}, postaction={decorate}] (-1,1) -- (0,0.5);
    \draw [blue, decoration={markings, mark=at position 0.5 with {\arrow{>}}}, postaction={decorate}] (0,0.5) -- (1,1);
    \draw [red, decoration={markings, mark=at position 0.5 with {\arrow{>}}}, postaction={decorate}] (0,1) -- (0,0.5);
    \draw [red] (0,0.5) to [out=-150,in=90] (-0.5,0) to [out=-90,in=150] (0,-0.5);
    \draw [red, decoration={markings, mark=at position 0.5 with {\arrow{>}}}, postaction={decorate}] (0,-1) -- (0,-0.5);
    \draw [red, decoration={markings, mark=at position 0.5 with {\arrow{>}}}, postaction={decorate}] (0,0.5) -- (0.5,0);
    \draw [red, decoration={markings, mark=at position 0.5 with {\arrow{>}}}, postaction={decorate}] (0,-0.5) -- (0.5,0);
    \draw [red, decoration={markings, mark=at position 0.5 with {\arrow{>}}}, postaction={decorate}] (0.5,0) -- (1,0);
    \draw [blue] (0,0.5) -- (0,-0.5);
    \end{tikzpicture}
    \caption{Examples of non-existence of compatible orientations.}
    \label{fig: no compatible orientations}
\end{figure}
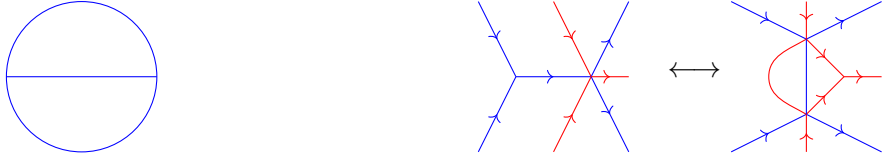
\noindent By introducing bivalent weave vertices, we can give these weaves compatible orientations.
\begin{figure}[H]
    \centering
    \begin{tikzpicture}[baseline=0]
        \draw [blue, decoration={markings, mark=at position 0.5 with {\arrow{>}}}, postaction={decorate}] (-1,0) arc (180:0:1);
        \draw [blue, decoration={markings, mark=at position 0.5 with {\arrow{>}}}, postaction={decorate}] (1,0) arc (0:-180:1);
        \draw [blue, decoration={markings, mark=at position 0.5 with {\arrow{>}}}, postaction={decorate}] (0,0) -- (1,0);
         \draw [blue, decoration={markings, mark=at position 0.5 with {\arrow{>}}}, postaction={decorate}] (0,0) -- (-1,0);
         \node [blue] at (0,0) [] {$\bullet$};
    \end{tikzpicture} \hspace{4cm}
    \begin{tikzpicture}[baseline=0]
    \draw [blue, decoration={markings, mark=at position 0.5 with {\arrow{>}}}, postaction={decorate}] (-1,-1) -- (-0.5,0);
    \draw [blue, decoration={markings, mark=at position 0.5 with {\arrow{>}}}, postaction={decorate}](-1,1) -- (-0.5,0);
    \draw [blue, decoration={markings, mark=at position 0.5 with {\arrow{>}}}, postaction={decorate}] (-0.5,0) -- (0.5,0);
    \draw [red, decoration={markings, mark=at position 0.5 with {\arrow{>}}}, postaction={decorate}] (0,1) -- (0.5,0);
    \draw [red, decoration={markings, mark=at position 0.5 with {\arrow{>}}}, postaction={decorate}] (0,-1) -- (0.5,0);
    \draw [blue, decoration={markings, mark=at position 0.5 with {\arrow{>}}}, postaction={decorate}] (0.5,0) -- (1,-1);
    \draw [blue, decoration={markings, mark=at position 0.5 with {\arrow{>}}}, postaction={decorate}] (0.5,0) -- (1,1);
    \draw [red, decoration={markings, mark=at position 0.5 with {\arrow{>}}}, postaction={decorate}] (0.5,0) -- (1,0);
    \end{tikzpicture} \quad ${\longleftrightarrow}$ \begin{tikzpicture}[baseline=0]
    \draw [blue, decoration={markings, mark=at position 0.5 with {\arrow{>}}}, postaction={decorate}] (-1,-1) -- (0,-0.5);
    \draw[blue, decoration={markings, mark=at position 0.5 with {\arrow{>}}}, postaction={decorate}] (0,-0.5) -- (1,-1);
    \draw [blue, decoration={markings, mark=at position 0.5 with {\arrow{>}}}, postaction={decorate}] (-1,1) -- (0,0.5);
    \draw [blue, decoration={markings, mark=at position 0.5 with {\arrow{>}}}, postaction={decorate}] (0,0.5) -- (1,1);
    \draw [red, decoration={markings, mark=at position 0.5 with {\arrow{>}}}, postaction={decorate}] (0,1) -- (0,0.5);
    \draw [red, decoration={markings, mark=at position 0.5 with {\arrow{>}}}, postaction={decorate}] (-0.5,0) to [out=-90,in=150] (0,-0.5);
    \draw [red, decoration={markings, mark=at position 0.5 with {\arrow{>}}}, postaction={decorate}] (-0.5,0) to [out=90,in=-150] (0,0.5);
    \node [red] at (-0.5,0) [] {$\bullet$};
    \draw [red, decoration={markings, mark=at position 0.5 with {\arrow{>}}}, postaction={decorate}] (0,-1) -- (0,-0.5);
    \draw [red, decoration={markings, mark=at position 0.5 with {\arrow{>}}}, postaction={decorate}] (0,0.5) -- (0.5,0);
    \draw [red, decoration={markings, mark=at position 0.5 with {\arrow{>}}}, postaction={decorate}] (0,-0.5) -- (0.5,0);
    \draw [red, decoration={markings, mark=at position 0.5 with {\arrow{>}}}, postaction={decorate}] (0.5,0) -- (1,0);
    \draw [blue, decoration={markings, mark=at position 0.5 with {\arrow{>}}}, postaction={decorate}] (0,0.5) -- (0,0);
    \draw [blue, decoration={markings, mark=at position 0.5 with {\arrow{>}}}, postaction={decorate}] (0,-0.5) -- (0,0);
    \node [blue] at (0,0) [] {$\bullet$};
    \end{tikzpicture}
    \caption{Compatible orientations after adding bivalent weave vertices.}
\end{figure}
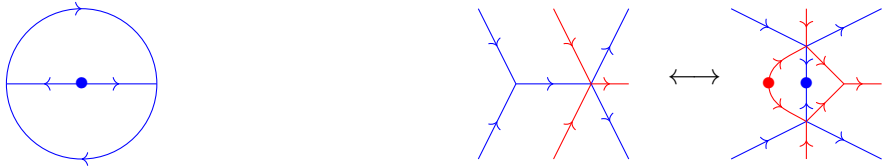
\noindent Note that bivalent weave vertices are not needed if we are not computing merodromies. Also, since bivalent weave vertices only introduce a possible sign to merodromies, they are not needed if the base field is of characteristic $2$.
\end{rmk}

\subsection{Mutations} In this subsection we describe how a family of weighted cycles change under a weave mutation at a short I-cycle.

\begin{defn}\label{defn: mutation of weighted cycles} Suppose $\ww$ and $\ww'$ are two weaves differing by a mutation at a short I-cycle at the weave edge $e$. If a weighted chain $\eta$ on $\ww$ has an intersection number $\inprod{e}{\eta}=0$, then we can use homotopy move (4) to move it out of the local mutating region and keep it as a weighted chain on $\ww'$. In contrast, if $\eta$ is a weighted chain with an intersection number $\inprod{e}{\eta}=1$, then we define the \emph{mutation} of $\eta$ at $e$ to be $\eta_1+\eta_2$ in $\cW(\ww')$, where $\eta_1$ and $\eta_2$ are depicted in Figure \ref{fig:merodromy mutation}. 
\end{defn}
\begin{figure}[H]
    \centering
    \begin{tikzpicture}[baseline=0]
        \draw [blue] (-1,-1) -- (-0.5,0);
    \draw [blue] (-1,1) -- (-0.5,0);
        \draw [blue] (1,-1) -- (0.5,0);
        \draw [blue] (0.5,0) -- (1,1);
        \draw [blue] (-0.5,0) -- (0.5,0);
        \draw [dashed,->] (0,-1) node [below] {$s_i.\mu$} -- (0,1) node [above] {$\mu$};
        \node at (0,-1.8) [] {$\eta$};
    \end{tikzpicture} \hspace{1cm} $\longrightarrow$ \hspace{1cm}
        \begin{tikzpicture}[baseline=0]
        \draw [blue] (-1,-1) -- (0,-0.5);
        \draw [blue] (1,-1) -- (0,-0.5);
        \draw [blue] (-1,1) -- (0,0.5);
        \draw [blue] (0,0.5) -- (1,1);
        \draw [blue] (0,-0.5) -- (0,0.5);
        \draw [dashed,->] (0,-1) node [below] {$s_i.\mu$} -- (0.5,0) node [right] {$\mu$} -- (-0.5,0) node [left] {$s_i.\mu$} -- (0,1) node [above] {$\mu$};
        \node at (0,-1.8) [] {$\eta_1$};
    \end{tikzpicture}
    \quad $+$ \quad
    \begin{tikzpicture}[baseline=0]
        \draw [blue] (-1,-1) -- (0,-0.5);
        \draw [blue] (1,-1) -- (0,-0.5);
        \draw [blue] (-1,1) -- (0,0.5);
        \draw [blue] (0,0.5) -- (1,1);
        \draw [blue] (0,-0.5) -- (0,0.5);
        \draw [dashed,->] (0,-1) node [below] {$s_i.\mu$} -- (-0.5,0) node [left] {$\mu$} -- (0.5,0) node [right] {$s_i.\mu$} -- (0,1) node [above] {$\mu$};
        \node at (0,-1.8) [] {$\eta_2$};
    \end{tikzpicture}
    \caption{Mutation of a weighted chain whose intersection number with the short I-cycle is $1$.}
    \label{fig:merodromy mutation}
\end{figure}

\begin{defn} Let $e$ be a short I-cycle on a weave $\ww$. Within the weighted cycle algebra $\cW(\ww)$, we define the subalgebra $\cW_e^+(\ww)$ of \emph{non-negative weighted cycles} at $e$ to be the subalgebra generated by weighted cycles with representatives such that all of whose weighted chains have non-negative intersection numbers with $e$. Then the mutation operation above induces an algebra homomorphism 
\[
\mu_e:\cW_e^+(\ww)\longrightarrow  \cW(\ww'),
\]
where $\ww'$ is the weave obtained from $\ww$ by a mutation at $e$.
\end{defn}

\begin{thm}\label{thm: mutation} The mutation homomorphism $\mu_e:\cW_e^+(\ww)\longrightarrow  \cW(\ww')$ commutes with the merodromy homomorphisms.
\end{thm}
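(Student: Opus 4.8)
The plan is to use that all three maps in sight are algebra homomorphisms: $\mu_e$ is a homomorphism by hypothesis, and $M^\bullet_\ww,M^\bullet_{\ww'}$ are homomorphisms by Proposition \ref{prop: merodromy invariant under homotopies} (stacking of weighted cycles goes to multiplication of functions). Hence it suffices to verify the commutativity on a generating set of $\cW_e^+(\ww)$, that is, on the weighted chains $\eta$ with $\inprod{e}{\eta}\in\{0,1\}$ treated in Definition \ref{defn: mutation of weighted cycles}. First I would pin down the identification of the two target rings. Outside the local mutating region (Figure \ref{fig:mutation}) the weaves $\ww$ and $\ww'$ carry exactly the same edges and faces; the only change is that the relative-position constraint carried by the short I-edge moves from the pair of faces $(F_N,F_S)$ straddling $e$ in $\ww$ to the pair $(F_W,F_E)$ straddling the new edge $e'$ in $\ww'$. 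On the dense open locus where all six local relative positions equal $s_i$, the four local flags $F_N,F_S,F_W,F_E$ (and all flags outside the region) may be kept unchanged, yielding a canonical birational identification $\cM_\fr(\ww)\cong\cM_\fr(\ww')$; since merodromies are nowhere-vanishing rational functions, restricting to this locus is harmless. Under this identification both merodromy maps land in the same function field and the claim becomes $M^{\mu_e(\eta)}=M^\eta$.

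Next I would dispose of the case $\inprod{e}{\eta}=0$. Here $(\alpha_i,\mu)=0$, i.e.\ $s_i.\mu=\mu$, so homotopy move (4) pushes $\eta$ through the trivalent vertices entirely out of the mutating region, producing the literally identical chain on $\ww'$. Its merodromy then involves only edges lying outside the region, where $\ww$ and $\ww'$ agree and the flags are matched by the identity map; combined with the homotopy invariance of merodromies (Proposition \ref{prop: merodromy invariant under homotopies}) this gives $M^{\mu_e(\eta)}=M^\eta$ at once.

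The heart of the argument is the case $\inprod{e}{\eta}=1$, where $(\alpha_i,\mu)=1$ and $\mu_e(\eta)=\eta_1+\eta_2$ as in Figure \ref{fig:merodromy mutation}. Because merodromy is a product over crossings and the parts of $\eta,\eta_1,\eta_2$ outside the region coincide, I would reduce everything to the local contributions. The chain $\eta$ crosses only $e$, so its local merodromy is a single factor $h_\pm(F_S,F_N)^\mu$; since $(\alpha_i,\mu)=1$ and every local edge has color $s_i$, I would push this computation into $\varphi_i(\SL_2)$, where a decorated flag is a vector in $\mathbb{C}^2$ and each $h$-distance becomes, up to sign, the Pl\"ucker pairing $\langle\cdot,\cdot\rangle:=\det(\cdot\,|\,\cdot)$ of the two local decorations. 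Thus the local $M^\eta$ is the single monomial $\langle F_S,F_N\rangle$. Each of $\eta_1,\eta_2$ crosses three local edges — two of the four ``side'' edges and the new edge $e'$ — so its local merodromy is a product of three such Pl\"ucker factors, the $e'$-crossing contributing an inverse power of the pairing $\langle F_E,F_W\rangle$ because of its orientation. The desired identity $M^{\eta_1}+M^{\eta_2}=M^\eta$ then becomes precisely the three-term Grassmann--Pl\"ucker relation
\[
\langle F_S,F_N\rangle\,\langle F_E,F_W\rangle=\langle F_S,F_E\rangle\,\langle F_N,F_W\rangle-\langle F_S,F_W\rangle\,\langle F_N,F_E\rangle
\]
in $\mathbb{C}^2$, which is exactly why the mutation replaces one chain by a sum of two.

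The main obstacle — and where I would spend the most care — is the sign and orientation bookkeeping needed to land on this relation. I would track, at each of the seven relevant crossings, whether $h_+$ or $h_-$ occurs (governed by the chosen compatible orientation and converted via Lemma \ref{lem: signs of h-distances}), match the weight exponents $\mu$ versus $s_i.\mu$ along the three segments of each $\eta_k$ (using Proposition \ref{prop: reduced word} and the Weyl twists), and check that the resulting signs reproduce both the minus sign between the two Pl\"ucker monomials and the inverse power of $\langle F_E,F_W\rangle$. Once these local signs are verified, multiplicativity of merodromy over crossings together with the homomorphism property of $\mu_e$ upgrade the local identity to the full equality $M^{\mu_e(\eta)}=M^\eta$ on all of $\cW_e^+(\ww)$, completing the proof.
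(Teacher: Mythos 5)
Your skeleton is the same as the paper's: dispose of the case $\inprod{e}{\eta}=0$ via homotopy move (4), reduce by multiplicativity to a single chain with $\inprod{e}{\eta}=1$, and establish the three-term identity $M^{\eta}=M^{\eta_1}+M^{\eta_2}$ by a local computation at the mutation vertex, the identity being of Pl\"ucker type (this is indeed what secretly drives the paper's calculation). However, there is a genuine gap at the decisive step. Your reduction ``push this computation into $\varphi_i(\SL_2)$, where each $h$-distance becomes, up to sign, the Pl\"ucker pairing'' is false as stated: an $h$-distance is a full torus element $h\in T$, and the local merodromy is $h^{\mu}$, which depends on \emph{all} fundamental-weight components of $\mu$, not only the $\omega_i$-component pinned down by $\inprod{\alpha_i^\vee}{\mu}=1$. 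The components $h^{\omega_j}$ for $j\neq i$ are invisible to any $2\times 2$ determinant, so the claimed dictionary does not exist termwise. What is true---and what must be proved---is that these extra components contribute an \emph{identical} factor to all three quantities $M^{\eta}$, $M^{\eta_1}$, $M^{\eta_2}$, so that only the rank-one data enters the exchange identity. In the paper's computation this phenomenon is exactly what one sees at the end: after parametrizing the four decorated flags as $N$, $\overline{s}_ih_2N$, $x_i(p)\overline{s}_ih_1N$, $x_i(q)\overline{s}_ih_3N$ and computing all seven $h_{\pm}$-distances (using $\overline{s}_i^{-1}x_i(a)=x_i(-a^{-1})(-a)^{-\alpha_i^\vee}y_i(a^{-1})$ and Lemma \ref{lem: signs of h-distances} for the negative crossings), the full torus elements $h_1,h_3$ cancel within each product and one is left with $M^{\eta_1}=\frac{p}{p-q}h_2^{\mu}$ and $M^{\eta_2}=\frac{q}{q-p}h_2^{\mu}$, whose sum is $h_2^{\mu}=M^{\eta}$.

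Compounding this, the sign and orientation bookkeeping that you explicitly defer (``I would track\dots and check'') is not a routine verification to be postponed; it \emph{is} the proof. Which crossings of $\eta_1$ and $\eta_2$ in Figure \ref{fig:merodromy mutation} are negative (hence use $h_-$ rather than $h_+$), how the Weyl twists $s_i(\cdot)$ act on the intermediate $h$-distances, and why the two scalar prefactors sum to $1$ are precisely the content of the paper's argument; without carrying them out, one cannot even confirm that the signs conspire to produce your displayed Pl\"ucker relation rather than some other sign pattern. So while your strategy is sound and matches the paper's in outline, the proposal stops where the actual proof begins: you need (i) a lemma isolating the $\varphi_i(\SL_2)$-part of each local merodromy and showing the complementary torus contributions are a common factor, and (ii) the explicit crossing-by-crossing computation with a chosen compatible orientation and parametrized decorated flags.
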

\begin{proof} It suffices to prove that when $\inprod{e}{\eta}=1$, $M^\eta=M^{\eta_1}+M^{\eta_2}$ where $\eta_1+\eta_2$ is the mutation of $\eta$ at $e$. Let us use the weights in Figure \ref{fig:merodromy mutation} for reference. By Definition \ref{defn: intersection number}, we know that $\inprod{\alpha_i^\vee}{\mu}=1$. Without loss of generality, let us assume that the orientations on weave edges and the surrounding decorated flags are chosen as follows (with $p\neq q$ and $p,q\neq 0$). It follows that $M^\eta=h_2^\mu$.
\[
\begin{tikzpicture}[baseline=0]
        \draw [blue, decoration={markings, mark=at position 0.5 with {\arrow{>}}}, postaction={decorate}] (-1,-1) -- (-0.5,0);
         \draw [blue, decoration={markings, mark=at position 0.5 with {\arrow{>}}}, postaction={decorate}] (-1,1) -- (-0.5,0);
        \draw [blue, decoration={markings, mark=at position 0.5 with {\arrow{>}}}, postaction={decorate}] (1,-1) -- (0.5,0);
        \draw  [blue, decoration={markings, mark=at position 0.5 with {\arrow{>}}}, postaction={decorate}] (0.5,0) -- (1,1);
        \draw [blue, decoration={markings, mark=at position 0.5 with {\arrow{>}}}, postaction={decorate}] (-0.5,0) -- (0.5,0);
        \node at (0,-1) [] {$N$};
        \node at (0,1) [] {$\overline{s}_ih_2N$};
        \node at (-2,0) [] {$x_i(p)\overline{s}_ih_1N$};
        \node at (2,0) [] {$x_i(q)\overline{s}_ih_3N$};
    \end{tikzpicture} \hspace{1cm} \longleftrightarrow \hspace{1cm}
        \begin{tikzpicture}[baseline=0]
        \draw [blue, decoration={markings, mark=at position 0.5 with {\arrow{>}}}, postaction={decorate}] (-1,-1) -- (0,-0.5);
        \draw [blue, decoration={markings, mark=at position 0.5 with {\arrow{>}}}, postaction={decorate}] (1,-1) -- (0,-0.5);
        \draw [blue, decoration={markings, mark=at position 0.5 with {\arrow{>}}}, postaction={decorate}] (-1,1) -- (0,0.5);
         \draw [blue, decoration={markings, mark=at position 0.5 with {\arrow{>}}}, postaction={decorate}] (0,0.5) -- (1,1);
        \draw [blue, decoration={markings, mark=at position 0.5 with {\arrow{>}}}, postaction={decorate}] (0,-0.5) -- (0,0.5);
        \node at (0,-1) [] {$N$};
        \node at (0,1) [] {$\overline{s}_ih_2N$};
        \node at (-2,0) [] {$x_i(p)\overline{s}_ih_1N$};
        \node at (2,0) [] {$x_i(q)\overline{s}_ih_3N$};
    \end{tikzpicture}
\]
For the right pictures, on the one hand, the signs of the crossings of $\eta_1$ with the weave edges are $-, +, +$; the $h_-$ distance between the bottom and the right decorated flags is $(-1)^{\alpha_i^\vee}h_3$; the $h_+$ distance between the right and the left decorated flags is $s_i(h_3^{-1})(q-p)^{-\alpha_i^\vee}h_1$; the $h_+$ distance between the left and the top decorated flags is $h_1^{-1}p^{\alpha_i^\vee}h_2$. Thus $M^{\eta_1}$ is equal to
\[
M^{\eta_1}=((-1)^{\alpha_i^\vee}h_3)^{\mu}\cdot (s_i(h_3^{-1})(q-p)^{-\alpha_i^\vee}h_1)^{s_i.\mu}\cdot (s_i(h_1^{-1})p^{\alpha_i^\vee}h_2)^\mu=\left(\frac{p}{p-q}\right)h_2^\mu.
\]
On the other hand, the signs of the crossings of $\eta_2$ with the weave edges are $+,-,+$; the $h_+$ distance between the bottom and the left decorated flags is $h_1$; the $h_-$ distance between the left and the right decorated flags is $s(h_3^{-1})(q-p)^{-\alpha_i^\vee}h_1$; the $h_+$ distance between the right and the top decorated flags is $s_i(h_1^{-1})q^{\alpha_i^\vee}h_2$. Thus $M^{\eta_2}$ is equal to 
\[
M^{\eta_2}=h_1^\mu\cdot (s(h_1^{-1})(q-p)^{-\alpha_i^\vee}h_3)^{s_i.\mu}\cdot (s_i(h_1^{-1})q^{\alpha_i^\vee}h_2)^{\mu}=\left(\frac{q}{q-p}\right)h_2^\mu.
\]
Thus $M^{\eta_1}+M^{\eta_2}=\left(\frac{p}{p-q}+\frac{q}{q-p}\right)h_2^\mu=h_2^\mu=M^\eta$.
\end{proof}

\section{Applications}\label{sec4}

\subsection{Cluster Theory} In the case where a cluster seed can be described by a weave (e.g., Demazure weaves \cite{CGGLSS}), we can use weighted cycles to give a pseudo-topological description of the cluster structure.

\begin{defn}\label{defn: X-variables} Let $\eta$ be the weighted cycle representative of a Y-cycle $\gamma$. We define the \emph{cluster $\cX$-variable} $X_\gamma:=M^\eta$.
\end{defn}

\begin{defn}\label{defn: A-variables} Let $r$ be the rank of $G$, let $\tau$ be the number of trivalent weave vertices, and let $\beta$ be the number of boundary base points. Let $m:=\tau+r(\beta-1)$. Let $\{\gamma_1,\dots,\gamma_m\}$ be a set of linearly independent Y-cycles. Let $\{\eta_1,\dots, \eta_m\}$ be a set of weighted cycles dual to $\{\gamma_1,\dots, \gamma_m\}$ in the sense that $\inprod{\eta_a}{\gamma_b}=\delta_{ab}$. We define the \emph{cluster $\cA$-variables} $A_1,\dots, A_m$ by setting $A_a:=M^{\eta_a}$.
\end{defn}

\begin{rmk} We will prove in the next section that in the case of Demazure weaves, Definitions \ref{defn: X-variables} and \ref{defn: A-variables} give the same cluster $\cX$- and $\cA$-variables as in \cite{CGGLSS}.
\end{rmk}

\begin{rmk} By Theorem \ref{thm: weighted cycle algebra is a torus}, the rank of the lattice of weighted cycles is precisely $m=\tau+r(\beta-1)$; thus, the linearly independent weighted cycles $\eta_1,\dots, \eta_m$ form a sublattice of finite index. If this sublattice is saturated, then we can express the weighted cycle representatives of Y-cycles as a linear combination of $\eta_1,\dots, \eta_m$, and thus giving a way to express any cluster $\cX$-variable $X_a=X_{\gamma_a}$ as a product of the cluster $\cA$-variables $A_1,\dots, A_m$, which must necessarily be $X_a=\prod_b A_b^{\epsilon_{ab}}$ due to the intersection pairing. This is equivalent to the cluster theoretical $p$-map\cite{FGensemble}. 
\end{rmk}

\begin{rmk} The mutations of cluster $\mathcal{A}$-variables and cluster $\mathcal{X}$-variables are compatible with the local mutation rule for weighted cycles described in Definition \ref{defn: mutation of weighted cycles}. For example, note that in Figure \ref{fig:merodromy mutation}, the intersection number between the weighted cycle and the weave edge in the left picture is $1$, and the intersection numbers in both of the right pictures are $1,-1,1$. This coincides with the mutation formula of cluster $\mathcal{A}$-variables:
\[
A'_c=\frac{\prod_{\epsilon_{ca}>0}A_a^{\epsilon_{ca}}+\prod_{\epsilon_{cb}<0}A_b^{-\epsilon_{cb}}}{A_c}
\]
A similar analysis reveals the compatibility with cluster $\mathcal{X}$-variables mutations as well.
\end{rmk}

\subsection{Demazure Weaves}

In \cite{CGGS,CGGLSS}, a special family of weaves called Demazure weaves were introduced, and each Demazure weave was equipped with a collection of special Y-cycles called Lusztig cycles. Let us first recall their definitions below.

\begin{defn} A \emph{Demazure weave} is a weave drawn on the rectangle $[0,1]\times [0,1]$ such that:
\begin{itemize}
    \item All external weave edges are incident to the top or the bottom boundary of the rectangle.
    \item No point along any weave edge admits a horizontal tangent line.
    \item Each trivalent weave vertex is incident to two weave edges above it and one below it.
    \item Each tetravalent weave vertex is incident to two weave edges above it and two below it.
    \item Each hexavalent weave vertex is incident to three weave edges above it and three below it.
\end{itemize}
We put one boundary base point at the lower left-hand corner of the rectangle and another one at the lower right-hand corner of the rectangle. Also we assume without loss of generality that all weave vertices in a Demazure weave are at different heights.
\end{defn}

Because of this definition, all edges in a Demazure weave can be equipped with a downward pointing orientation, and such a choice of orientations on weave edges is an example of the compatible orientations in the sense of Definition \ref{defn: compatible orientations}.

\begin{defn} Let $\ww$ be a Demazure weave and let $v$ be a trivalent weave vertex in $\ww$. The \emph{Lusztig cycle} associated with $v$ is constructed as follows as we scan $\ww$ from top to bottom.
\begin{itemize}
    \item Any weave edge $e$ that begins above $v$ is assigned with $\gamma(e)=0$.
    \item The unique weave edge $e$ that begins at $v$ is assigned with $\gamma(e)=1$.
    \item For any trivalent weave vertex $w$ below $v$, the assignments must satisfy $\gamma(c)=\min\{\gamma(a),\gamma(b)\}$, where $c$ is the unique weave edge that begins at $w$.
    \item For any tetravalent weave vertex $w$, the assignments must satisfy $\gamma(a)=\gamma(c)$ and $\gamma(b)=\gamma(d)$, where $a,b,c,d$ are weave edges incident to $w$ in a cyclic order, with $a,b$ above $w$ and $c,d$ below $w$.
    \item For any hexavalent weave vertex $w$, the assignments must satisfy $\gamma(d)=\gamma(a)+\gamma(b)-\min\{\gamma(a),\gamma(c)\}$, $\gamma(e)=\min\{\gamma(a),\gamma(c)\}$, and $\gamma(f)=\gamma(b)+\gamma(c)-\min\{\gamma(a),\gamma(c)\}$, where $a,b,c,d,e,f$ are weave edges incident to $w$ in a cyclic order, with $a,b,c$ above $w$ and $d,e,f$ below $w$.
\end{itemize}
\end{defn}

Recall that in \cite{CGGLSS}, each weave edge $e$ is equipped with a labeling $(z_e,u_e)$ such that the two decorated flags on the left and on the right of $e$ are $xN$ and $xB_i(z_e)u_e^{\alpha_i^\vee}N$ where $s_i$ is the color of $e$.

\begin{lem}\label{lem: merodromy across the edge below a trivalent} Suppose $\eta$ is a rightward-oriented weighted chain that only intersects the weave edge $e$ with color $s_i$ and labeling $(z_e,u_e)$ and suppose the intersection number is $n$. Then $M^\eta=u_e^n$.
\end{lem}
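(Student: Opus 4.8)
The plan is to reduce the computation of $M^\eta$ to a single local merodromy across $e$ and then read off the relevant $h$-distance directly from the CGGLSS labeling. Since $\eta$ is rightward-oriented and meets only $e$, and since in a Demazure weave every edge is oriented downward, the crossing carries a positive sign (compare Figure \ref{fig:intersection sign}); hence the merodromy is governed by the $h_+$-distance. Concretely, writing $xN$ and $yN$ for the decorated flags immediately to the left and to the right of $e$, and $\mu$ for the weight carried by $\eta$ just after it crosses $e$, Proposition \ref{prop: reduced word} applied to the length-one (automatically reduced) word $s_i$ gives
\[
M^\eta = h_+(xN,yN)^{\mu}.
\]

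The heart of the argument is to identify $h_+(xN,yN)$ from the labeling. By the construction of $(z_e,u_e)$ in \cite{CGGLSS}, the right decorated flag is $yN = x\,B_i(z_e)\,u_e^{\alpha_i^\vee}N$, where $B_i(z_e)$ factors as $x_i(z_e)\overline{s}_i$ with $x_i(\cdot)$ the unipotent one-parameter subgroup attached to $\alpha_i$. Thus
\[
x^{-1}y = x_i(z_e)\,\overline{s}_i\,u_e^{\alpha_i^\vee}.
\]
Because $x_i(z_e)\in N$ and $u_e^{\alpha_i^\vee}\in T$, this element lies in the double coset $N\overline{s}_i\,u_e^{\alpha_i^\vee}N$; that is, the parameter $z_e$ is entirely absorbed into the left unipotent factor, and only $u_e^{\alpha_i^\vee}$ survives as the torus part in the refined Bruhat decomposition $G=\bigsqcup_w N\overline{w}TN$. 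By the definition of the $h_+$-distance this yields $h_+(xN,yN) = u_e^{\alpha_i^\vee}$.

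It then remains to evaluate the character. Using that the simple coroots $\{\alpha_i^\vee\}$ are dual to the fundamental weights $\{\omega_j\}$, we have $\langle \mu,\alpha_i^\vee\rangle = (\alpha_i,\mu)$ for every weight $\mu$, since both sides extract the $\omega_i$-coefficient of $\mu$. Therefore
\[
M^\eta = \left(u_e^{\alpha_i^\vee}\right)^{\mu} = u_e^{\langle \mu,\alpha_i^\vee\rangle} = u_e^{(\alpha_i,\mu)} = u_e^{n},
\]
because by Definition \ref{defn: intersection number} the local intersection number of $\eta$ with $e$ at the crossing is exactly $\inprod{e}{\eta}_p = (\alpha_i,\mu) = n$. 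If $\eta$ meets $e$ transversally in several points, the claim follows by multiplicativity of the merodromy over crossings, each contributing its own factor $u_e^{(\alpha_i,\mu)}$, which sum in the exponent to the total intersection number $n$.

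I expect the only genuinely delicate point to be the Bruhat bookkeeping in the second step—verifying that the $z_e$-parameter lands entirely in the left unipotent factor so that the torus part is precisely $u_e^{\alpha_i^\vee}$—together with matching the normalization of the $\overline{s}_i$-lift used to define $h_+$ against the $\overline{s}_i$ implicit in the labeling $B_i(z_e)$. Once these conventions are aligned (and one checks via Lemma \ref{lem: signs of h-distances} that switching to $\doverline{s}_i$ would only contribute a sign, confirming the positive-crossing case is the right one here), the remainder is the direct character evaluation above.
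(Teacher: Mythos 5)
Your proof is correct and follows essentially the same route as the paper's: positivity of the crossing from the downward orientation, the identification $NB_i(z_e)u_e^{\alpha_i^\vee}N=N\overline{s}_iu_e^{\alpha_i^\vee}N$ so that $h_+(xN,yN)=u_e^{\alpha_i^\vee}$, and the character evaluation $\bigl(u_e^{\alpha_i^\vee}\bigr)^{\mu}=u_e^{\inprod{\mu}{\alpha_i^\vee}}=u_e^{n}$. The only additions (invoking Proposition \ref{prop: reduced word} for the length-one word and the remark about several crossings) are harmless elaborations of what the paper's one-line argument already contains.
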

\begin{proof} Note that the intersection is positive. Suppose the chamber weight associated with the right side of $\eta$ is $\mu$. Then from the intersection number, we know that $\inprod{\mu}{\alpha_i^\vee}=n$. Note that $NB_i(z_e)u_e^{\alpha_i^\vee}N=N\overline{s}_iu_e^{\alpha_i^\vee}N$. Thus, 
\[
M^\eta=\left(h_+(xN,xB_i(z_e)u_e^{\alpha_i^\vee})\right)^{\mu}=u_e^{\inprod{\mu}{\alpha_i^\vee}}=u_e^n.\qedhere
\]
\end{proof}

Let us also recall from \cite[Theorem 5.12]{CGGLSS} that the cluster variables $\{A_a\}$ associated with the Demazure weave $\ww$ are uniquely determined by the condition
\begin{equation}\label{eq: equation defining cluster variables}
u_e=\prod_aA_a^{\gamma_a(e)},
\end{equation}
where $e$ is any weave edge.

\begin{thm}\label{thm: cluster variables in Demazure weaves} Let $\ww$ be a Demazure weave and let $\{\gamma_a\}$ be the collection of Lusztig cycles on $\ww$. Then there exist a collection of dual weighted relative cycles $\{\eta_a\}$ on $\ww$ such that $\inprod{\gamma_a}{\eta_b}=\delta_{ab}$ and $M^{\eta_a}=A_a$ for all $a$, where $A_a$ is the cluster variable associated with the Y-cycle $\gamma_a$.
\end{thm}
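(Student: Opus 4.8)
The plan is to reduce the entire statement to a single linear-algebra fact about the intersection pairing, after which the merodromy claim becomes automatic. The starting observation is that Lemma \ref{lem: merodromy across the edge below a trivalent} globalizes: if $\eta$ is a weighted chain all of whose crossings with weave edges are positive, then iterating the computation in that lemma at each crossing (each consecutive pair of decorated flags has the Demazure form $x^{-1}y=B_i(z_e)u_e^{\alpha_i^\vee}$, so the local $h_+$-distance collapses to $u_e^{\alpha_i^\vee}$) yields $M^\eta=\prod_e u_e^{N_e(\eta)}$, where $N_e(\eta):=\sum_{p\in\eta\cap e}\inprod{e}{\eta}_p$. Substituting the defining relation \eqref{eq: equation defining cluster variables}, $u_e=\prod_a A_a^{\gamma_a(e)}$, gives $M^\eta=\prod_a A_a^{\sum_e\gamma_a(e)N_e(\eta)}$. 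Finally, whenever $\eta$ has no boundary-interval contributions against the Lusztig cycles, Definition \ref{defn: intersection number} shows $\inprod{\gamma_a}{\eta}=\sum_e\gamma_a(e)N_e(\eta)$, so that $M^\eta=\prod_a A_a^{\inprod{\gamma_a}{\eta}}$. Thus it suffices to produce weighted cycles $\eta_b$ of this ``nice'' type with $\inprod{\gamma_a}{\eta_b}=\delta_{ab}$, and the equality $M^{\eta_b}=A_b$ will come for free.

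For the construction, for each trivalent vertex $v_c$ (of color $s_{i_c}$, with downward output edge $e_c$) I would take $\xi_c$ to be a single rightward-oriented horizontal weighted chain at a height just below $v_c$, running from the left side of the rectangle to the right side, with weight $\omega_{i_c}$ on the segment immediately to the right of the point where it crosses $e_c$ (the remaining weights being forced by the reflection rule). Because every weave edge is oriented downward and the chain runs rightward, all of its crossings are positive, so the factorization above applies. The endpoints of $\xi_c$ lie on the left and right sides of the rectangle, which together with the top form the single ``upper'' boundary interval; the only external edges on that interval are the top ones, on which every Lusztig cycle vanishes, while the bottom external edges lie on the other interval, away from $\partial\xi_c$. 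Hence there are no boundary-interval contributions, and $\inprod{\gamma_a}{\xi_c}=\sum_e\gamma_a(e)N_e(\xi_c)$ with $N_{e_c}(\xi_c)=(\alpha_{i_c},\omega_{i_c})=1$.

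I would then read off the matrix $Q_{ac}:=\inprod{\gamma_a}{\xi_c}$. Ordering the trivalent vertices by decreasing height, a Lusztig cycle $\gamma_a$ vanishes on every edge lying strictly above $v_a$; in particular, just below $v_c$ one has $\gamma_a\equiv 0$ on all strands when $v_a$ lies below $v_c$, whereas $\gamma_c$ equals $1$ on $e_c$ and $0$ on every other strand at that height. Therefore $Q$ is upper-triangular with $Q_{cc}=1$, hence $Q\in\GL_\tau(\mathbb{Z})$. Using that $M^\bullet$ is an algebra homomorphism and that $\inprod{\gamma_a}{\cdot}$ is additive under the stacking product on $\cW(\ww)$, I define $\eta_b:=\prod_c\xi_c^{(Q^{-1})_{cb}}$; the exponents are integers by unimodularity, so each $\eta_b$ is an honest weighted relative cycle. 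Then $\inprod{\gamma_a}{\eta_b}=\sum_c(Q^{-1})_{cb}Q_{ac}=\delta_{ab}$, and $M^{\eta_b}=\prod_c\big(M^{\xi_c}\big)^{(Q^{-1})_{cb}}=\prod_a A_a^{(QQ^{-1})_{ab}}=A_b$, as required.

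The main point requiring care is the globalized merodromy factorization together with the vanishing of all boundary-interval contributions: the first needs the precise Demazure labeling so that each local $h_+$-distance reduces to $u_e^{\alpha_i^\vee}$ with no leftover unipotent part, and the second relies on the specific placement of the two boundary base points at the lower corners, so that $\xi_c$'s endpoints never share a boundary interval with an external edge on which some $\gamma_a$ is supported. Everything else is bookkeeping: positivity of crossings is guaranteed by the downward orientation, the additivity of $\inprod{\gamma_a}{\cdot}$ follows because stacking only adds interior endpoints (which meet no weave edges), and the triangularity of $Q$ is forced by the top-to-bottom propagation rule defining the Lusztig cycles.
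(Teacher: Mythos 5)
Your proposal is correct and follows essentially the same route as the paper: the same horizontal chains $\xi_c$ drawn just below each trivalent vertex, the same use of Lemma \ref{lem: merodromy across the edge below a trivalent} combined with Equation \eqref{eq: equation defining cluster variables} to get $M^{\xi_c}=\prod_a A_a^{\inprod{\gamma_a}{\xi_c}}$, and the same height-induced triangularity of the pairing matrix. The only difference is organizational: the paper inverts the unipotent triangular matrix recursively via $\eta_a:=\xi_a-\sum_{b<a}\inprod{\gamma_b}{\xi_a}\eta_b$ and an induction, whereas you invert $Q$ in one step using the lattice structure of $\cW(\ww)$; your explicit check that boundary-interval contributions vanish (because the endpoints of $\xi_c$ lie on the upper boundary interval, where all Lusztig cycles vanish on the external edges) is a welcome detail that the paper leaves implicit.
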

\begin{proof} Let $p_a$ be the trivalent weave vertex that defines a Y-cycle $\gamma_a$ and suppose the color of the weave edges incident to $p_a$ is $s_i$. We define the weighted relative cycle $\xi_a$ by drawing a short weighted chain going from left to right across the weave edge directly below $p_a$, with chamber weights changing from $s_i.\omega_i$ to $\omega_i$; we then extend this weighted chain horizontally at both ends until they reach the boundary of the disk. Note that by construction, $\inprod{\gamma_a}{\xi_a}=1$. Moreover, if we define $a<b$ to mean that the trivalent weave vertex $p_a$ is higher than $p_b$ in the vertical direction, then it is also clear that $\inprod{\gamma_a}{\xi_b}=0$ for all $b>a$. Now we can go from top to bottom along the weave and define
\[
\eta_a:=\xi_a-\sum_{b<a}\inprod{\gamma_b}{\xi_a}\eta_b
\]

We will do an induction to prove that the collection of weighted relative cycles $\{\eta_a\}$ satisfies the statement. For the base case with the smallest index $a$, we have $\eta_a=\xi_a$ and hence $\inprod{\gamma_b}{\eta_a}=\inprod{\gamma_b}{\xi_a}=\delta_{ba}$. Also, since $p_a$ is the highest trivalent weave vertex, all labelings at the same horizontal level as $p_a$ have $u_e=1$. Thus, by Lemma \ref{lem: merodromy across the edge below a trivalent} and Equation \eqref{eq: equation defining cluster variables}, we have $M^{\eta_a}=A_a$.

Now inductively, we observe that for a general $a$,
\[
\inprod{\gamma_b}{\eta_a}=\left\{\begin{array}{ll}
   \inprod{\gamma_b}{\xi_a}-\sum_{c<a}\inprod{\gamma_c}{\xi_a}\inprod{\gamma_b}{\eta_c}=\inprod{\gamma_b}{\xi_a}-\inprod{\gamma_b}{\xi_a}=0  & \text{if $b<a$}, \\
    \inprod{\gamma_a}{\xi_a}-\sum_{c<a}\inprod{\gamma_c}{\xi_a}{\gamma_a}{\eta_c} =1&\text{if $b=a$},\\
    -\sum_{c<a}\inprod{\gamma_c}{\xi_a}\inprod{\gamma_b}{\eta_c}=0 & \text{if $b>a$}.
\end{array}\right.
\]
As for the merodromy claim, it follows from Lemma \ref{lem: merodromy across the edge below a trivalent} that 
\[
M^{\xi_a}=\prod_{e\cap \xi_a\neq \emptyset} u_e^{\inprod{e}{\xi_a}}=A_a\prod_{e\cap \xi_a\neq \emptyset} \prod_{b<a}A_b^{\gamma_b(e)\inprod{e}{\xi_a}}=A_a\prod_{b<a}A_b^{\inprod{\gamma_b}{\xi_a}},
\]
which implies inductively that
\[
A_a=M^{\xi_a}\Big/\prod_{b<a}A_b^{\inprod{\gamma_b}{\xi_a}}=M^{\xi_a}\Big/\prod_{b<a}(M^{\eta_b})^{\inprod{\gamma_b}{\xi_a}}=M^{\xi_a-\sum_{b<a}\inprod{\gamma_b}{\xi_a}\eta_b}=M^{\eta_a}.\qedhere
\]
\end{proof}

\begin{rmk} The construction presented in the proof of Theorem \ref{thm: cluster variables in Demazure weaves} can be applied to a more general choice of weighted cycles $\{\xi_a\}$: instead of extending the short weighted chains horizontally, we can extend them upward, as long as we ensure that they always intersect the weave edges positively.
\end{rmk}

\subsection{Generalized Minors as Merodromies}

Let $G$ be a simply-connected semisimple Lie group. The Peter-Weyl theorem states that, with respect to the two-sided action of $G$, the coordinate ring of $G$ can be decomposed as
\[
\mathcal{O}(G)\cong\bigoplus_{\lambda\in P_+}  V(\lambda)\otimes V(\lambda)^*,
\]
where $P_+$ denotes the set of dominant weights and $V(\lambda)$ is the unique irreducible representation with highest weight $\lambda$. Correspondingly, $V(\lambda)^*$ has a unique lowest weight $-\lambda$. The Peter-Weyl isomorphism can be constructed as follows: with an element $v\otimes \xi\in V(\lambda)\otimes V(\lambda)^*$, we associate the function
\[
f_{v\otimes \xi}(g):=\inprod{\xi}{g.v}. 
\]

Suppose $w_1.\omega_i$ and $w_2.\omega_i$ are two chamber weights in the same Weyl group orbit. Fix a highest weight vector $v_{\omega_i}$ in $V(\omega_i)$ and a lowest weight vector $\xi_{\omega_i}$ in $V(\omega_i)^*$ such that $\inprod{\xi_{\omega_i}}{v_{\omega_i}}=1$ (note that $\xi_{\omega_i}$ should be of weight $-\omega_i$). For a chamber weight $\mu=w.\omega_i$, we define
\[
v_\mu:=\overline{w}.v_{\omega_i} \quad \text{and} \quad \xi_\mu:=\overline{w}.\xi_{\omega_i}.
\]

\begin{defn}[{\cite[Section 1.4]{FZ}}] The \emph{generalized minor} associated with the pair $(\mu_1,\mu_2)$ of chamber weights in the same Weyl group orbit is the following regular function on $G$:
\[
\Delta_{\mu_1,\mu_2}(g):=f_{\mu_2\otimes \mu_1}(g)=\inprod{\xi_{\mu_1}}{g.v_{\mu_2}}.
\]
\end{defn}

\begin{prop}\label{prop: generalized minors} Suppose $\eta$ is a weighted chain that passes through a collection of weave edges that form a reduced word for a Weyl group element $w$ and suppose all crossings are positive. Suppose the weights at the beginning and the end of $\eta$ are $w.\omega_i$ and $\omega_i$, respectively, and suppose the decorated flags at the beginning and the end of $\eta$ are $xN$ and $yN$, respectively. Then $M^\eta=\Delta_{w.\omega_i,\omega_i}(x^{-1}y)$. 
\end{prop}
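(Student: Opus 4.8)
The plan is to reduce the statement to a direct computation of the generalized minor on a Bruhat cell representative, using Proposition \ref{prop: reduced word} to identify the merodromy with a power of an $h_+$ distance. First I would invoke Proposition \ref{prop: reduced word}: since the colors of the weave edges crossed by $\eta$ form a reduced word for $w_0$ and all crossings are positive, the merodromy is $M^\eta = h_+(xN,yN)^{\omega_i}$, where $\omega_i$ is the terminal weight of $\eta$. By the definition of the $h_+$ distance, together with the Tits distance being $w_0$, this says $x^{-1}y \in N\overline{w_0}hN$ with $h = h_+(xN,yN)$, so $M^\eta = h^{\omega_i}$. It therefore suffices to prove the purely representation-theoretic identity $\Delta_{w_0.\omega_i,\omega_i}(g) = h^{\omega_i}$ for every element of the form $g = n_1\overline{w_0}hn_2$ with $n_1,n_2\in N$ and $h\in T$.

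Next I would carry out this evaluation. Unwinding the definition, $\Delta_{w_0.\omega_i,\omega_i}(g) = \inprod{\xi_{w_0.\omega_i}}{g.v_{\omega_i}}$ with $\xi_{w_0.\omega_i} = \overline{w_0}.\xi_{\omega_i}$. Since $v_{\omega_i}$ is a highest weight vector, it is fixed by $N$, so $n_2.v_{\omega_i} = v_{\omega_i}$; then $h.v_{\omega_i} = h^{\omega_i}v_{\omega_i}$ and $\overline{w_0}.v_{\omega_i} = v_{w_0.\omega_i}$ is the lowest weight vector. Hence $g.v_{\omega_i} = h^{\omega_i}\,n_1.v_{w_0.\omega_i}$, giving $\Delta_{w_0.\omega_i,\omega_i}(g) = h^{\omega_i}\inprod{\xi_{w_0.\omega_i}}{n_1.v_{w_0.\omega_i}}$.

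The remaining pairing is where the main care is needed, and it rests on two weight-theoretic observations: first, $\xi_{w_0.\omega_i}$ has weight $-w_0.\omega_i$ (the highest weight of $V(\omega_i)^*$), so under the $T$-graded duality it annihilates every weight space of $V(\omega_i)$ except the lowest one $V(\omega_i)_{w_0.\omega_i}$; second, since $n_1\in N$ acts through positive root operators, $n_1.v_{w_0.\omega_i}$ equals $v_{w_0.\omega_i}$ plus terms of strictly higher weight. Combining these, the higher-weight terms drop out of the pairing and $\inprod{\xi_{w_0.\omega_i}}{n_1.v_{w_0.\omega_i}} = \inprod{\xi_{w_0.\omega_i}}{v_{w_0.\omega_i}}$. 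Finally, using the $G$-invariance of the natural pairing $V(\omega_i)^*\times V(\omega_i)\to\bC$, I would compute $\inprod{\overline{w_0}.\xi_{\omega_i}}{\overline{w_0}.v_{\omega_i}} = \inprod{\xi_{\omega_i}}{v_{\omega_i}} = 1$. Substituting $g = x^{-1}y$ then yields $\Delta_{w_0.\omega_i,\omega_i}(x^{-1}y) = h^{\omega_i} = M^\eta$.

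The main obstacle is precisely the verification that the left unipotent factor $n_1$ contributes trivially to the pairing; this is the only step where honest representation theory, rather than the formal definitions, enters, and it hinges on matching the weight of the highest-weight covector $\xi_{w_0.\omega_i}$ with the lowest weight space that $n_1$ stabilizes. Everything else is bookkeeping of the Bruhat decomposition and the identities $\overline{w_0}.v_{\omega_i} = v_{w_0.\omega_i}$ and $\overline{w_0}.\xi_{\omega_i} = \xi_{w_0.\omega_i}$ built into the definitions of $v_\mu$ and $\xi_\mu$.
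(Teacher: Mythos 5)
Your proposal is correct and follows essentially the same route as the paper: identify $M^\eta = h^{\omega_i}$ via Proposition \ref{prop: reduced word} and the decomposition $x^{-1}y = n_1\overline{w}_0 h n_2$, then evaluate $\Delta_{w_0.\omega_i,\omega_i}$ using that the unipotent factors contribute trivially. The only (cosmetic) difference is in how $n_1$ is discharged --- the paper conjugates it across the pairing to get an element of $N_-$ fixing $\xi_{\omega_i}$, while you keep it acting on $v_{w_0.\omega_i}$ and kill the higher-weight terms by weight-space orthogonality; both are standard and equivalent.
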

\begin{proof} On the one hand, we can find $u_1\in N\cap \overline{w}N_-\overline{w}^{-1}$ and $u_2\in N$ such that $x^{-1}y=u_1\overline{w}hu_2$ and $M^\eta=h^{\omega_i}$. On the other hand,
\[
\Delta_{w.\omega_i,\omega_i}(x^{-1}y)=\inprod{\xi_{w.\omega_i}}{x^{-1}y.v_{\omega_i}}
=\inprod{\overline{w}.\xi_{\omega_i}}{u_1\overline{w}hu_2.v_{\omega_i}}
=\inprod{\overline{w}^{-1}u_1\overline{w}.\xi_{\omega_i}}{hu_2.v_{\omega_i}}.
\]
Since $\overline{w}^{-1}u_1\overline{w}\in N_-$ and $u_2\in N$, they fix $\xi_{\omega_i}$ and $v_{\omega_i}$, respectively. Thus, we can conclude that $\Delta_{w.\omega_i,\omega_i}(x^{-1}y)=\inprod{\xi_{\omega_i}}{h.v_{\omega_i}}=h^{\omega_i}$.
\end{proof}

Proposition \ref{prop: generalized minors} allows us to describe some cluster $\mathcal{A}$-variables on double Bruhat cells \cite{BFZ} using merodromies along weighted cycles. Below is an example for a double Bruhat cell in $\SL_3$.

\begin{exmp}\label{exmp: unipotent group} Consider the double Bruhat cell $B_-\cap Bw_0B$ in $\SL_3$. One of its cluster seeds can be described by the following weave, with two boundary base points at the two upper corners. We only draw the orientations on a few external weave edges; the orientations on the remaining weave edges can be uniquely determined by using the compatibility conditions. The yellow weave cycle corresponds to the unique mutable vertex. The merodromy along each of the five weighted cycles below is of the form
\[
\Delta_{w_0.\omega_i,\omega_i}(\doverline{u}^{-1}g)=\inprod{\overline{w}_0.\xi_{\omega_i}}{\doverline{u}^{-1}g.v_{\omega_i}}=\inprod{\overline{uw_0}.\xi_{\omega_i}}{g.v_{\omega_i}}=\Delta_{uw_0.\omega_i,\omega_i}(g).
\]
Thus, from left to right, the merodromies are $\Delta_{23,12}(g)$, $\Delta_{3,1}(g)$, $\Delta_{13,12}(g)$, $\Delta_{1,1}(g)$, and $\Delta_{12,12}(g)$, where $\Delta_{I,J}(g)$ denotes the determinant of the submatrix formed by the $I$th rows and the $J$th columns. These are precisely the cluster $\mathcal{A}$-variables of the seed.
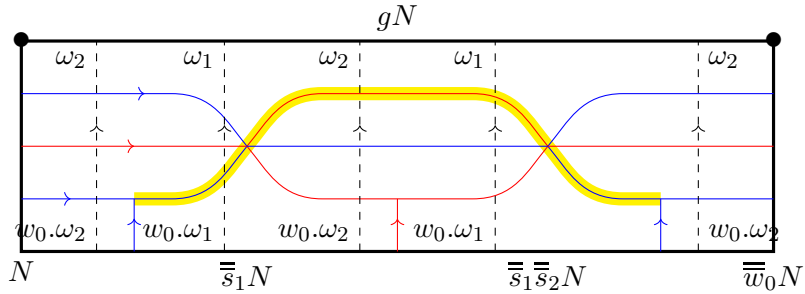
\begin{figure}[H]
    \centering
    \begin{tikzpicture}[yscale=0.7]
    \draw [very thick] (0,-1) rectangle (10,3);
    \node at (0,3) [] {\Large{$\bullet$}}; 
    \node at (10,3) [] {\Large{$\bullet$}};
    \draw [yellow, line width = 5pt] (1.5,0) -- (2,0) to [out=0,in=-120] (3,1) to [out=60,in=180] (4,2) -- (6,2) to [out=0,in=120] (7,1) to [out=-60,in=180] (8,0) -- (8.5,0);
        \draw [blue, decoration={markings, mark=at position 0.2 with {\arrow{>}}}, postaction={decorate}] (0,0) -- (2,0) to [out=0,in=-120] (3,1);
        \draw [blue, decoration={markings, mark=at position 0.5 with {\arrow{>}}}, postaction={decorate}] (0,2) -- (2,2) to [out=0,in=120] (3,1);
        \draw [red, decoration={markings, mark=at position 0.5 with {\arrow{>}}}, postaction={decorate}] (0,1) -- (3,1);
        \draw [red] (3,1) to [out=-60,in=180] (4,0) -- (6,0) to [out=0,in=-120] (7,1);
        \draw [red] (3,1) to [out=60,in=180] (4,2) -- (6,2) to [out=0,in=120] (7,1);
        \draw [blue] (3,1) -- (7,1);
        \draw [blue] (7,1) to [out=-60,in=180] (8,0) -- (10,0);
        \draw [blue] (7,1) to [out=60,in=180] (8,2) -- (10,2);
        \draw [red] (7,1) -- (10,1);
        \draw [blue, decoration={markings, mark=at position 0.5 with {\arrow{<}}}, postaction={decorate}] (1.5,0) -- (1.5,-1);
        \draw [red, decoration={markings, mark=at position 0.5 with {\arrow{<}}}, postaction={decorate}] (5,0) -- (5,-1);
        \draw [blue, decoration={markings, mark=at position 0.5 with {\arrow{<}}}, postaction={decorate}] (8.5,0) -- (8.5,-1);
        \node at (0,-1) [below] {$N$};
        \node at (3,-1) [below] {$\doverline{s}_1N$};
        \node at (7,-1) [below] {$\doverline{s}_1\doverline{s}_2N$};
        \node at (10,-1) [below] {$\doverline{w}_0N$};
        \node at (5,3) [above] {$gN$};
        \draw [dashed, decoration={markings, mark=at position 0.6 with {\arrow{>}}}, postaction={decorate}] (1,-1) node [above left] {$w_0.\omega_2$} -- (1,3) node [below left] {$\omega_2$};
        \draw [dashed, decoration={markings, mark=at position 0.6 with {\arrow{>}}}, postaction={decorate}] (2.7,-1) node [above left] {$w_0.\omega_1$} -- (2.7,3) node [below left] {$\omega_1$};
        \draw [dashed, decoration={markings, mark=at position 0.6 with {\arrow{>}}}, postaction={decorate}] (4.5,-1) node [above left] {$w_0.\omega_2$} -- (4.5,3) node [below left] {$\omega_2$};
        \draw [dashed, decoration={markings, mark=at position 0.6 with {\arrow{>}}}, postaction={decorate}] (6.3,-1) node [above left] {$w_0.\omega_1$} -- (6.3,3) node [below left] {$\omega_1$};
        \draw [dashed, decoration={markings, mark=at position 0.6 with {\arrow{>}}}, postaction={decorate}] (9,-1) node [above right] {$w_0.\omega_2$} -- (9,3) node [below right] {$\omega_2$};
    \end{tikzpicture}
    \caption{Merodromies as cluster variables for a double Bruhat cell.}
    \label{fig: Bruhat cell}
\end{figure}
\end{exmp}

\subsection{Cross-Ratios and Triple-Ratios as Merodromies}

It is well known that in many cases, certain cluster $\mathcal{X}$-variables can recover geometric invariants. For example, the cross-ratio between four distinct points in $\mathbb{P}^1$, and the triple-ratio among three generally positioned flags in a 3-dimensional vector space \cite{FGteich}. In this subsection, we will describe how to represent these geometric invariants using merodromies. 

\subsubsection{Cross-Ratios} The cluster structures on cyclic compactifications of $\mathcal{M}_{0,n}$ are captured by triangulations of $n$-gons with vertices labeled $1,2,\dots, n$. Given any triangulation of such a labeled $n$-gon, its dual graph is an $\mathrm{A}_1$-weave. We can give this $\mathrm{A}_1$-weave a compatible orientation by requiring the only outgoing external weave edge to be the edge between the vertices $1$ and $n$. Note that each diagonal in the triangulation and each boundary edge of the $n$-gon now cuts through exactly one weave line. 

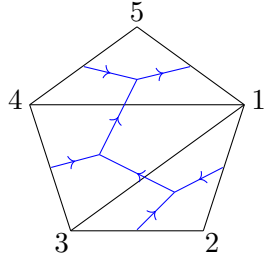
\begin{figure}[H]
    \centering
    \begin{tikzpicture}
    \foreach \i in {1,...,5}
    {
    \draw (-72*\i+90:1.5) -- (72+90-72*\i:1.5);
    \node at (90-72*\i:1.7) [] {$\i$};
    }
    \draw (18:1.5) -- (162:1.5);
    \draw (18:1.5) -- (234:1.5);
    \draw [blue, decoration={markings, mark=at position 0.5 with {\arrow{>}}}, postaction={decorate}] (-0.5,-0.2) -- (0,0.8);
    \draw [blue, decoration={markings, mark=at position 0.5 with {\arrow{>}}}, postaction={decorate}] (0,0.8) -- (54:1.2);
    \draw [blue, decoration={markings, mark=at position 0.5 with {\arrow{>}}}, postaction={decorate}] (126:1.2)-- (0,0.8);
    \draw [blue, decoration={markings, mark=at position 0.5 with {\arrow{>}}}, postaction={decorate}] (198:1.2) -- (-0.5,-0.2);
    \draw [blue, decoration={markings, mark=at position 0.5 with {\arrow{>}}}, postaction={decorate}] (0.5,-0.7) -- (-0.5,-0.2);
    \draw [blue, decoration={markings, mark=at position 0.5 with {\arrow{>}}}, postaction={decorate}] (-18:1.2) -- (0.5,-0.7);
    \draw [blue, decoration={markings, mark=at position 0.5 with {\arrow{>}}}, postaction={decorate}] (0,-1.2) -- (0.5,-0.7);
    \end{tikzpicture}
    \caption{A triangulation and its dual graph as a weave with a choice of compatible orientations.}
\end{figure}

Let us denote the diagonal/boundary edge connecting vertices $i<j$ by $\eta_{ij}$. We can then promote $\eta_{ij}$ to a weighted cycle by orienting it from $i$ to $j$ and labeling the source side with $-\omega_1$ and the target side with $\omega_1$. Then by construction, each $\eta_{ij}$ intersects a unique weave edge positively with an intersection number $1$.

Suppose we associate a line (a 1-dimensional linear subspace) $l_i\subset \mathbb{C}^2$ with the vertex $i$ such that any two lines connected by a diagonal in the triangulation or a boundary edge of the $n$-gon are transverse. Let $N$ be the maximal unipotent subgroup of upper unipotent triangular matrices in $\SL_2$. Then a decorated flag over $l_i$ can be represented by $x_iN$ for some $x_i\in \SL_2$. In particular, with this representative, the first column vector $v_i$ of $x_i$ is a non-zero vector in $l_i$. Note that for any $\SL_2$ matrix $\begin{pmatrix} a & b\\ c & d\end{pmatrix}$, we have $\begin{pmatrix} a & b\\ c & d\end{pmatrix}^{-1}=\begin{pmatrix} d & -b\\ -c & a\end{pmatrix}$. Thus,
\[
x_i^{-1}x_j=\begin{pmatrix} d_ia_j-b_ic_j & d_ib_j-b_id_j \\ -c_ia_j+a_ic_j & -c_ib_j+a_id_j\end{pmatrix}
\]
and hence by Proposition \ref{prop: generalized minors},
\[
M_{ij}:=M^{\eta_{ij}}=\Delta_{2,1}(x_i^{-1}x_j)=-c_ia_j+a_ic_j=\det(v_i\wedge v_j).
\]
Note that these are precisely the Pl\"{u}cker coordinates in the corresponding cluster seed in $\Gr_{2,n}$.

Meanwhile, each internal weave edge is a short I-cycle. Suppose the four adjacent faces of a short I-cycle $\gamma$ are associated with lines $l_i,l_j, l_k$, and $l_l$ for $i<j<k<l$, as depicted in Figure \ref{fig: crossratio}.
\begin{figure}[H]
    \centering
    \begin{tikzpicture}[baseline=0]
    \draw[line width=5pt, yellow] (-0.5,0) -- (0.5,0);
        \draw [blue, decoration={markings, mark=at position 0.5 with {\arrow{>}}}, postaction={decorate}] (-1.5,-1.5) -- (-0.5,0);
         \draw [blue, decoration={markings, mark=at position 0.5 with {\arrow{>}}}, postaction={decorate}] (-1.5,1.5) -- (-0.5,0);
        \draw [blue, decoration={markings, mark=at position 0.5 with {\arrow{>}}}, postaction={decorate}] (1.5,-1.5) -- (0.5,0);
        \draw  [blue, decoration={markings, mark=at position 0.5 with {\arrow{>}}}, postaction={decorate}] (0.5,0) -- (1.5,1.5);
        \draw [blue, decoration={markings, mark=at position 0.5 with {\arrow{>}}}, postaction={decorate}] (-0.5,0) -- (0.5,0);
        \node at (0,-1.5) [] {$l_j$};
        \node at (0,1.5) [] {$l_l$};
        \node at (-2.5,0) [] {$l_k$};
        \node at (2.5,0) [] {$l_i$};
        \draw [dashed, ->, decoration={markings, mark=at position 0.5 with {\arrow{>}}}, postaction={decorate}] (0.5,-0.5) -- (-0.5,-0.5) arc (270:90:0.5) -- (0.5,0.5) arc (90:-90:0.5);
        \node at (1,0) [right] {$-\omega_1$};
        \node at (0,-0.5) [below] {$\omega_1$};
        \node at (-1,0) [left] {$-\omega_1$};
        \node at (0,0.5) [above] {$\omega_1$};
    \end{tikzpicture}
    \caption{Local picture of a Y-cycle.}
    \label{fig: crossratio}
\end{figure}
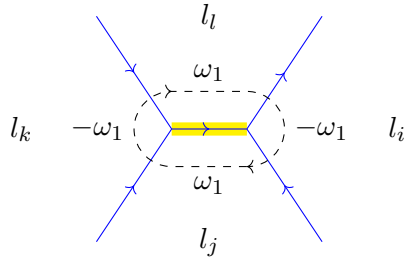
Let us fix decoration $v_i, v_j, v_k$, and $v_l$ above the four lines. Then on the one hand, the merodromy from the right face to the bottom face and that from the left face to the top face are $M_{ij}=\det(v_i\wedge v_j)$ and $M_{kl}=\det(v_k\wedge v_l)$, respectively, and the merodromy from the bottom face to the left face is
\[
(-\omega_1)(h_+(x_jN,x_kN)=(\omega_1(h_+(x_jN,x_kN))^{-1}=M_{jk}^{-1}=\det(v_j\wedge v_k)^{-1}.
\]
On the other hand, note that the intersection between the weighted cycle and the northeast weave edge is negative. By using homotopy move (1) (see Figure \ref{fig: homotopies}), the merodromy from the top face to the right face is equal to the merodromy of a weighted chain starting with $\omega_1$ on the right face, going across the northeast weave edge, and ending with $-\omega_i$ on the top face. Note that this is analogous to the merodromy from the bottom face to the left face and therefore we can conclude that this last piece of contribution is $M_{il}^{-1}=\det(v_i\wedge v_l)^{-1}$. Therefore in total, we have
\[
X_\gamma=\frac{M_{ij}M_{kl}}{M_{jk}M_{il}},
\]
which is precisely the cross-ratio of $l_i$, $l_j$, $l_k$, and $l_l$.

\subsubsection{Triple-Ratios} The cluster structure on the configuration of three generally positioned flags in a 3-dimensional vector space can be captured by the following plabic graph \cite{Gon}. By performing the T-shift construction \cite{CLSBW}, we obtain the following $\mathrm{A}_2$-weave in the middle. The highlighted part is the Y-cycle whose associated cluster $\cX$-variable is the triple-ratio of the three flags. The right picture is the weighted cycle representative for this Y-cycle; we also orient the weave edges in a compatible fashion in the right picture.
\begin{figure}[H]
    \centering
    \begin{tikzpicture}
    \foreach \i in {0,1,2}
    {
    \draw [very thick] (90+120*\i:2) -- (-30+120*\i:2);
    \draw (90+120*\i:2) -- (90+120*\i:1);
    \draw (90+120*\i:1) -- (120*\i+30:1);
    \draw (90+120*\i:1) -- (120*\i+150:1);
    }
    \foreach \i in {0,1,2}
    {
    \draw [fill=white] (90+120*\i:2) circle [radius=0.2];
    \draw [fill=white] (30+120*\i:1) circle [radius=0.2];
    \draw [fill=black] (90+120*\i:1) circle [radius=0.2];
    }
    \end{tikzpicture}\hspace{2cm}
    \begin{tikzpicture}
    \foreach \i in {0,1,2}
    {
    \draw [very thick] (90+120*\i:2) -- (-30+120*\i:2);
    \draw [yellow, line width = 5pt] (0,0) -- (90+120*\i:1);
    \draw [blue] (30+120*\i:1) -- (0,0);
    \draw [red] (90+120*\i:1) -- (0,0);
    \draw [red] (110+120*\i:1.34) -- (90+120*\i:1) -- (70+120*\i:1.34);
    }
    \end{tikzpicture}\hspace{2cm}
    \begin{tikzpicture}
    \draw [red, decoration={markings, mark=at position 0.5 with {\arrow{>}}}, postaction={decorate}] (120:1.732)-- (0,1);
    \draw [red, decoration={markings, mark=at position 0.5 with {\arrow{>}}}, postaction={decorate}] (60:1.732) -- (0,1);
    \draw [red, decoration={markings, mark=at position 0.5 with {\arrow{>}}}, postaction={decorate}] (0,1) -- (0,0);
    \draw [blue, decoration={markings, mark=at position 0.5 with {\arrow{>}}}, postaction={decorate}] (150:1.5) -- (0,0);
    \draw [blue, decoration={markings, mark=at position 0.5 with {\arrow{>}}}, postaction={decorate}] (30:1.5) -- (0,0);
    \draw [blue, decoration={markings, mark=at position 0.5 with {\arrow{>}}}, postaction={decorate}] (0,0) -- (0,-1.5);
    \draw [red, decoration={markings, mark=at position 0.5 with {\arrow{>}}}, postaction={decorate}]
    (0,0) -- (-150:1);
    \draw [red, decoration={markings, mark=at position 0.5 with {\arrow{>}}}, postaction={decorate}] (0,0) -- (-30:1);
    \draw [red, decoration={markings, mark=at position 0.5 with {\arrow{>}}}, postaction={decorate}]
    (-1.732,0) -- (-150:1);
    \draw [red, decoration={markings, mark=at position 0.5 with {\arrow{>}}}, postaction={decorate}] (1.732,0) -- (-30:1);
    \draw [red, decoration={markings, mark=at position 0.5 with {\arrow{>}}}, postaction={decorate}] (-150:1) -- (-120:1.732);
    \draw [red, decoration={markings, mark=at position 0.5 with {\arrow{>}}}, postaction={decorate}] (-30:1) -- (-60:1.732);
    \foreach \i in {0,1,2}
    {
    \draw [dashed, decoration={markings, mark=at position 0.5 with {\arrow{>}}}, postaction={decorate}] (90+120*\i:1.4) arc (90+120*\i:120*\i:0.4)--(30+120*\i:0.4); 
    \draw [dashed, decoration={markings, mark=at position 0.5 with {\arrow{<}}}, postaction={decorate}] (90+120*\i:1.4) arc (90+120*\i:180+120*\i:0.4)--(150+120*\i:0.4); 
    \node at (90+120*\i:1.7) [] {$\omega_1-\omega_2$};
    }
    \end{tikzpicture}
    \caption{Ideal web and T-shifted weave for the configuration space of three flags.}
\end{figure}
By applying the homotopy moves (1), (8), (9), and (11), we can turn the weighted cycle in the right picture above into the following. Note that all crossings are positive.
\begin{figure}[H]
    \centering
    \begin{tikzpicture}
    \draw [red, decoration={markings, mark=at position 0.5 with {\arrow{>}}}, postaction={decorate}] (120:1.732)-- (0,1);
    \draw [red, decoration={markings, mark=at position 0.5 with {\arrow{>}}}, postaction={decorate}] (60:1.732) -- (0,1);
    \draw [red, decoration={markings, mark=at position 0.5 with {\arrow{>}}}, postaction={decorate}] (0,1) -- (0,0);
    \draw [blue, decoration={markings, mark=at position 0.5 with {\arrow{>}}}, postaction={decorate}] (150:1.5) -- (0,0);
    \draw [blue, decoration={markings, mark=at position 0.5 with {\arrow{>}}}, postaction={decorate}] (30:1.5) -- (0,0);
    \draw [blue, decoration={markings, mark=at position 0.5 with {\arrow{>}}}, postaction={decorate}] (0,0) -- (0,-1.5);
    \draw [red, decoration={markings, mark=at position 0.5 with {\arrow{>}}}, postaction={decorate}]
    (0,0) -- (-150:1);
    \draw [red, decoration={markings, mark=at position 0.5 with {\arrow{>}}}, postaction={decorate}] (0,0) -- (-30:1);
    \draw [red, decoration={markings, mark=at position 0.5 with {\arrow{>}}}, postaction={decorate}]
    (-1.732,0) -- (-150:1);
    \draw [red, decoration={markings, mark=at position 0.5 with {\arrow{>}}}, postaction={decorate}] (1.732,0) -- (-30:1);
    \draw [red, decoration={markings, mark=at position 0.5 with {\arrow{>}}}, postaction={decorate}] (-150:1) -- (-120:1.732);
    \draw [red, decoration={markings, mark=at position 0.5 with {\arrow{>}}}, postaction={decorate}] (-30:1) -- (-60:1.732);
    \draw [dashed, decoration={markings, mark=at position 0.6 with {\arrow{>}}}, postaction={decorate}] (80:2) node [above right] {$-\omega_2$} -- (-20:2) node [right] {$\omega_1$};
    \draw [dashed, decoration={markings, mark=at position 0.6 with {\arrow{>}}}, postaction={decorate}] (75:1.5) node [above] {$\omega_1$} -- (-15:1.5)node [below] {$-\omega_2$};
    \draw [dashed, decoration={markings, mark=at position 0.6 with {\arrow{<}}}, postaction={decorate}] (100:2) node [above left] {$\omega_1$} -- (-160:2) node [left] {$-\omega_2$};
    \draw [dashed, decoration={markings, mark=at position 0.6 with {\arrow{<}}}, postaction={decorate}] (105:1.5) node [above] {$-\omega_2$} -- (-165:1.5)node [below] {$\omega_1$};
    \draw [dashed, decoration={markings, mark=at position 0.6 with {\arrow{>}}}, postaction={decorate}] (-1.5,-1.1) node [left] {$\omega_2$} -- (1.5,-1.1) node [right] {$-\omega_1$};
    \draw [dashed, decoration={markings, mark=at position 0.6 with {\arrow{>}}}, postaction={decorate}] (-1.5,-1.4) node [left] {$-\omega_1$} -- (1.5,-1.4) node [right] {$\omega_2$};
    \node at (-30:3) [] {$zN$};
    \node at (-150:3) [] {$xN$};
    \node at (90:2.5) [] {$yN$};
    \end{tikzpicture}
    \caption{Breaking down the weighted cycle into six weighted chains.}
\end{figure}

Next we need to compute the merodromy along each weighted chain. To do that, we make use of Proposition \ref{prop: generalized minors} again. Suppose we have generally positioned decorated flags $xN$ and $yN$ with $x,y\in \SL_3$, then the merodromy of the weighted chain $\begin{tikzpicture}[baseline=5]
\node at (-1,0.5) [] {$xN$};
\draw [red, ->] (0,1) -- (0,0);
\draw [blue,->] (1,1) -- (1,0);
\draw [red,->] (2,1) -- (2,0);
\node at (3,0.5) [ ] {$yN$};
\draw [dashed, decoration={markings, mark=at position 0.6 with {\arrow{>}}}, postaction={decorate}] (-0.5,0.5) node [above] {$-\omega_2$} -- (2.5,0.5) node [above] {$\omega_1$};
\end{tikzpicture}$ is
\[
\Delta_{3,1}(x^{-1}y)=y_{11}\det\begin{pmatrix}x_{21} & x_{22} \\ x_{31} & x_{32}\end{pmatrix}-y_{21}\det\begin{pmatrix}x_{11} & x_{12} \\ x_{31} & x_{32} \end{pmatrix}+y_{31}\det\begin{pmatrix} x_{11} & x_{12} \\ x_{21} & x_{22}
\end{pmatrix}=\det(y_1\wedge x_1 \wedge x_2),
\]
where $x_i$ and $y_j$ are the $i$th and $j$th column vectors of $x$ and $y$, respectively. Similarly, the merodromy of the weighted chain $\begin{tikzpicture}[baseline=5]
\node at (-1,0.5) [] {$xN$};
\draw [red, ->] (0,1) -- (0,0);
\draw [blue,->] (1,1) -- (1,0);
\draw [red,->] (2,1) -- (2,0);
\node at (3,0.5) [ ] {$yN$};
\draw [dashed, decoration={markings, mark=at position 0.6 with {\arrow{>}}}, postaction={decorate}] (-0.5,0.5) node [above] {$-\omega_1$} -- (2.5,0.5) node [above] {$\omega_2$};
\end{tikzpicture}$ is
\[
\Delta_{23,12}(x^{-1}y)=\Delta_{3,1}(y^{-1}x)=\det(x_1\wedge y_1\wedge y_2).
\]
Thus, the total merodromy of the weighted cycle representative of the Y-cycle is
\[
X_\gamma=\frac{\det(y_1\wedge x_1\wedge x_2)\det(z_1\wedge y_1\wedge y_2)\det(x_1\wedge z_1\wedge z_2)}{\det(x_1\wedge y_1\wedge y_2)\det (y_1\wedge z_1\wedge z_2)\det (z_1\wedge x_1\wedge x_2)},
\]
which is precisely the triple-ratio of the triple of flags $(xN,yN,zN)$.

\subsection{Type A Weaves} Recall that a weave $\ww$ of Dynkin type $\mathrm{A}_n$ in fact describes a Legendrian surface $\Lambda_\ww$ \cite{CZ}. The Legendrian surface $\Lambda_\ww$ is a ramified $n+1$-fold cover of the disk, and the weave $\ww$ is precisely the projection of the singular locus of the front projection of $\Lambda_\ww$. 

In the case of a type $\mathrm{A}_n$ weave $\ww$, the weighted cycles on $\ww$ can be realized as actual relative cycle in $H_1(\Lambda_\ww,\partial \Lambda_\ww-B; \mathbb{Q})$, where $B$ is the set of boundary base points. To see this, note that each weight $\mu$ in the weight lattice of $\mathrm{A}_n$ is a vector $(\mu_1,\dots, \mu_{n+1})$ in a $(n+1)$-dimensional Euclidean space, all of whose entries are in $\frac{1}{n+1}\mathbb{Z}$. Thus, we can lift a weighted chain $\eta$ with weights $\mu$ to a relative chain $\tilde{\eta}$ on $\Lambda_\ww$ whose support is the preimage of $\eta$, and whose signed multiplicity on level $i$ is precisely $\mu_i$. It is not hard to verify that the intersection pairing between weighted cycles in the Dynkin type A case recovers the intersection pairing between relative $1$-cycles on the Legendrian surface $\Lambda_\ww$. Also, the merodromies of weighted cycles recover the merodromies along relative cycles introduced in \cite{CW} in Dynkin type A.

\begin{exmp} Figure \ref{fig: fundamental weights in A_2} is a cross-sectional depiction of the realization of a weighted chain on an $\mathrm{A}_2$ weave $\ww$ as an actual relative cycle on $\Lambda_\ww$.
\end{exmp}
\begin{figure}[H]
    \centering
    \begin{tikzpicture}[baseline=20]
    \draw [red] (-0.3,1) -- (-0.3,0) node [below] {$s_2$};
    \draw [blue] (1.3,1) -- (1.3,0) node [below] {$s_1$};
    \draw [dashed, decoration={markings, mark=at position 0.5 with {\arrow{>}}}, postaction={decorate}] (-1,0.5) node [above] {$\omega_2$} -- node [above] {$\omega_1-\omega_2$}(2,0.5) node [above] {$-\omega_1$};
    \end{tikzpicture} \hspace{2cm}
    \begin{tikzpicture}[baseline=0, yscale=0.7]
        \draw [decoration={markings, mark=at position 0.5 with {\arrow{>}}}, postaction={decorate}] (0,-1) node [left] {$\frac{1}{3}$} -- (2,-1) to [out=0,in=180] (4,0);
        \draw [decoration={markings, mark=at position 0.5 with {\arrow{>}}}, postaction={decorate}] (0,0) node [left] {$\frac{1}{3}$} to [out=0,in=180] (2,1) -- (4,1);
        \draw [decoration={markings, mark=at position 0.5 with {\arrow{<}}}, postaction={decorate}] (0,1) node [left] {$\frac{2}{3}$} to [out=0,in=180] (2,0) to [out=0,in=180] (4,-1);
        \node [red] at (1,0.5) [] {$\bullet$};
        \node [blue] at (3,-0.5) [] {$\bullet$};
    \end{tikzpicture}
    \caption{Lifting a weighted cycle into a relative cycle}
    \label{fig: fundamental weights in A_2}
\end{figure}
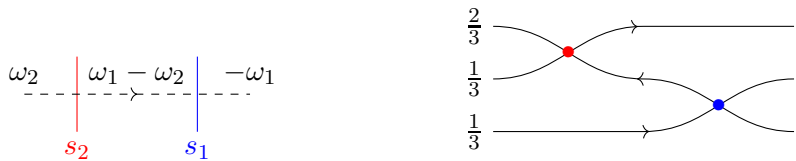

\subsection{Quantum Group \texorpdfstring{$U_q(\mathfrak{sl}_2)$}{}}\label{subsec: quantum group} Through a series of work \cite{SS,IP,GS3,Shen}, it is known that quantum groups are closely related to cluster algebras. In this subsection, we would like to take the quantum Drinfeld double $D_q(\mathfrak{sl}_2)$ as an example and exhibit its generators as weighted cycles on a weave $\ww$ and show that the relations among the generators can be recovered from the intersection pairings between weighted cycles, and thus mapping $D_q(\mathfrak{sl}_2)$ into the quantum weighted cycle algebra $\mathbb{W}(\ww)$. 

Although the base surface here is no longer a disk but a punctured disk, all previous constructions of weighted cycles on a disk can be easily generalized to a punctured disk. We also expect that this mapping of Drinfeld doubles into quantum weighted cycle algebra can be generalized to all Dynkin types.

Recall that the quantum Drinfeld double has four generators $E$, $F$, $K$, and $K'$, and they satisfy the following relations:
\[
KE=q^2EK, \quad K'E=q^{-2}EK', \quad KF=q^{-2}FK, \quad K'F=q^2FK', \quad [E,F]=(q-q^{-1})(K'-K).
\]

We map these four generators to the following weighted cycles, which we denote by $\eta_E$, $\eta_F$, $\eta_K$, and $\eta_{K'}$, respectively. Note that each of the top weaves differ from the weave in the bottom row by a single mutation, and each of the weighted cycles $\eta_E$ and $\eta_F$ can be mutated into a sum of two weighted cycles on the weave at the bottom. We abuse notation and still denote the sums that are mutated weighted cycles by $\eta_E$ and $\eta_F$, respectively. We also choose the weave $\ww$ at the bottom row to define our quantum weighted cycle algebra $\mathbb{W}(\ww)$.
\begin{figure}[H]
$E\longmapsto$\begin{tikzpicture}[baseline=0]
\draw [very thick] (0,0)  circle [radius=1.5];
\draw [very thick] (0,0)  circle [radius=0.1];
\node at (-170:1.5) [] {\Large{$\bullet$}};
\node at (10:1.5) [] {\Large{$\bullet$}};
\draw [blue] (0,0) circle [radius=0.5];
\draw [blue] (-1.5,0) to [out=0,in=180] (0,-1) to [out=0,in=180] (1.5,0);
\draw [blue] (0,-0.5) -- (0,-1);
\draw [dashed, decoration={markings, mark=at position 0.6 with {\arrow{>}}}, postaction={decorate}] (-102:1.5) -- node [above left] {\footnotesize{$\omega_1$}} (-0.3,-1) arc (180:0:0.3) --  node [above right] {\footnotesize{$-\omega_1$}} (-78:1.5);
\node at (0,-1.5) [below] {$\eta_E$};
\end{tikzpicture}, \hspace{1cm} $F\longmapsto$\begin{tikzpicture}[baseline=0]
\draw [very thick] (0,0)  circle [radius=1.5];
\draw [very thick] (0,0)  circle [radius=0.1];
\node at (-170:1.5) [] {\Large{$\bullet$}};
\node at (10:1.5) [] {\Large{$\bullet$}};
\draw [blue] (0,0) circle [radius=0.5];
\draw [blue] (-1.5,0) to [out=0,in=180] (0,1) to [out=0,in=180] (1.5,0);
\draw [blue] (0,0.5) -- (0,1);
\draw [dashed, decoration={markings, mark=at position 0.6 with {\arrow{>}}}, postaction={decorate}] (78:1.5) -- node [below right] {\footnotesize{$\omega_1$}} (0.3,1) arc (0:-180:0.3) --  node [below left] {\footnotesize{$-\omega_1$}} (102:1.5);
\node at (0,1.5)[above] {$\eta_F$};
\end{tikzpicture},\\\vspace{0.3cm}
$K\longmapsto$  \begin{tikzpicture}[baseline=0]
\draw [very thick] (0,0)  circle [radius=1.5];
\draw [very thick] (0,0)  circle [radius=0.1];
\node at (-170:1.5) [] {\Large{$\bullet$}};
\node at (10:1.5) [] {\Large{$\bullet$}};
\draw [blue] (0,0) circle [radius=0.5];
\draw [blue] (-1.5,0) -- (-0.5,0);
\draw [blue] (1.5,0) -- (0.5,0);
\draw [dashed, decoration={markings, mark=at position 0.6 with {\arrow{>}}}, postaction={decorate}] (-135:1.5) node [above right] {\footnotesize{$\alpha_1$}} -- (135:1.5) node [below right] {\footnotesize{$-\alpha_1$}};
\node at (-1.1,0) [above right] {$\eta_K$};
\end{tikzpicture}, \hspace{1cm}
$K'\longmapsto$ \begin{tikzpicture}[baseline=0]
\draw [very thick] (0,0)  circle [radius=1.5];
\draw [very thick] (0,0)  circle [radius=0.1];
\node at (-170:1.5) [] {\Large{$\bullet$}};
\node at (10:1.5) [] {\Large{$\bullet$}};
\draw [blue] (0,0) circle [radius=0.5];
\draw [blue] (-1.5,0) -- (-0.5,0);
\draw [blue] (1.5,0) -- (0.5,0);
\draw [dashed, decoration={markings, mark=at position 0.6 with {\arrow{>}}}, postaction={decorate}] (45:1.5) node [below left] {\footnotesize{$\alpha_1$}} -- (-45:1.5) node [above left] {\footnotesize{$-\alpha_1$}};
\node at (1.1,0) [above left] {$\eta_{K'}$};
\end{tikzpicture}.
\caption{Mapping of generators of the quantum Drinfeld double.}
\end{figure}
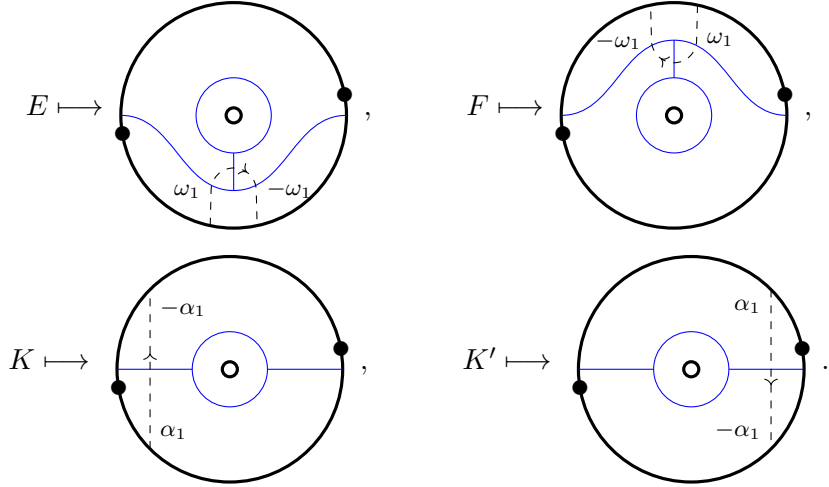

We observe that since $\eta_K$ can be homotoped into a small neighborhood near the boundary base point the left, the only contribution to the intersection pairing $\{\eta_K,\mu(\eta_E)\}$ comes from the lower boundary interval; through computation, one can find that $\{\eta_K,\mu(\eta_E)\}=1$ and therefore 
\[
\eta_K\eta_E=q^{2\{\eta_K,\eta_E\}}\eta_E\eta_K=q^2\eta_E\eta_K.
\]
Similar computations prove all remaining relations except the very last one.

For the last relation, let us draw both $\eta_E$ and $\eta_F$ on $\ww$. Note that the first term in $\eta_E$ commutes with the second term in $\eta_F$, and the second term in $\eta_E$ commutes with the first term in $\eta_F$. Thus, the commutator $[E,F]$ comes only from commuting the first terms between $\eta_E$ and $\eta_F$ and commuting the second terms between $\eta_E$ and $\eta_F$.
\begin{figure}[H]
    \centering
    $\eta_E=$\quad \begin{tikzpicture}[baseline=-5]
    \draw [blue] (0,0) circle [radius=1];
    \draw [blue] (-2,0) -- (-1,0) ;
    \draw [blue] (1,0) -- (2,0);
    \draw [very thick] (0,0) circle [radius=0.1];
    \draw [dashed, decoration={markings, mark=at position 0.6 with {\arrow{>}}}, postaction={decorate}] (0.5,-1.3) node [below] {\footnotesize{$\omega_1$}} -- (0.5,0) node [above left] {\footnotesize{$-\omega_1$}} arc (180:0:0.5) node [above right] {\footnotesize{$\omega_1$}} -- (1.5,-1.3) node [below] {\footnotesize{$-\omega_1$}};
    \end{tikzpicture}\quad $+$ \quad \begin{tikzpicture}[baseline=-5]
    \draw [blue] (0,0) circle [radius=1];
    \draw [blue] (-2,0) -- (-1,0) ;
    \draw [blue] (1,0) -- (2,0);
    \draw [very thick] (0,0) circle [radius=0.1];
    \draw [dashed, decoration={markings, mark=at position 0.6 with {\arrow{>}}}, postaction={decorate}] (-1.5,-1.3) node [below] {\footnotesize{$\omega_1$}} -- (-1.5,0) node [above left] {\footnotesize{$-\omega_1$}} arc (180:0:0.5) node [above right] {\footnotesize{$\omega_1$}} -- (-0.5,-1.3) node [below] {\footnotesize{$-\omega_1$}};
    \end{tikzpicture}, \\
    $\eta_F=$\quad \begin{tikzpicture}[baseline=-5]
    \draw [blue] (0,0) circle [radius=1];
    \draw [blue] (-2,0) -- (-1,0) ;
    \draw [blue] (1,0) -- (2,0);
    \draw [very thick] (0,0) circle [radius=0.1];
    \draw [dashed, decoration={markings, mark=at position 0.6 with {\arrow{>}}}, postaction={decorate}] (1.5,1.3) node [above] {\footnotesize{$\omega_1$}} -- (1.5,0) node [below right] {\footnotesize{$-\omega_1$}} arc (0:-180:0.5) node [below left] {\footnotesize{$\omega_1$}} -- (0.5,1.3) node [above] {\footnotesize{$-\omega_1$}};
    \end{tikzpicture}\quad $+$ \quad \begin{tikzpicture}[baseline=-5]
    \draw [blue] (0,0) circle [radius=1];
    \draw [blue] (-2,0) -- (-1,0) ;
    \draw [blue] (1,0) -- (2,0);
    \draw [very thick] (0,0) circle [radius=0.1];
    \draw [dashed, decoration={markings, mark=at position 0.6 with {\arrow{>}}}, postaction={decorate}] (-0.5,1.3) node [above] {\footnotesize{$\omega_1$}} -- (-0.5,0) node [below right] {\footnotesize{$-\omega_1$}} arc (0:-180:0.5) node [below left] {\footnotesize{$\omega_1$}} -- (-1.5,1.3) node [above] {\footnotesize{$-\omega_1$}};
    \end{tikzpicture}, 
    \caption{Mutated weighted cycles $\eta_E$ and $\eta_F$ drawn on $\ww$.}
\end{figure}
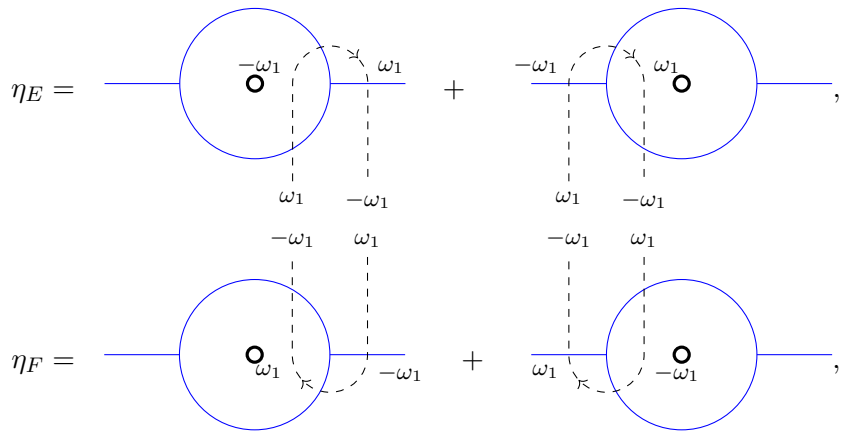
Let us multiply the first term in $\eta_E$ and the first term in $\eta_F$. There are two intersection points (Figure \ref{fig: intersection point between E and F}), yielding an intersectino number of $(\omega_1,\omega_1)-(-\omega_1,\omega_1)=\frac{1}{2}+\frac{1}{2}=1$. By applying the homotopy moves (9), (10), and (11), we can turn this classical product into $\eta_{K'}$. This shows that the commutator of the first terms is $(q-q^{-1})K'$. By a similar computation, one can show that the commutator of the second terms is $(q^{-1}-q)K$. Combining these two commutators, we get precisely the remaining relation $[E,F]=(q-q^{-1})(K'-K)$.
\begin{figure}[H]
    \centering$q\cdot$
    \begin{tikzpicture}[baseline=-5]
    \draw [blue] (0,0) circle [radius=1];
    \draw [blue] (-2,0) -- (-1,0) ;
    \draw [blue] (1,0) -- (2,0);
    \draw [very thick] (0,0) circle [radius=0.1];
    \draw [dashed, decoration={markings, mark=at position 0.6 with {\arrow{>}}}, postaction={decorate}] (0.5,-1.3) node [below] {\footnotesize{$\omega_1$}} -- (0.5,0) node [above left] {\footnotesize{$-\omega_1$}} arc (180:0:0.5) node [above right] {\footnotesize{$\omega_1$}} -- (1.5,-1.3) node [below] {\footnotesize{$-\omega_1$}};
    \draw [dashed, decoration={markings, mark=at position 0.6 with {\arrow{>}}}, postaction={decorate}] (1.5,1.3) node [above] {\footnotesize{$\omega_1$}} -- (1.5,0) node [below right] {\footnotesize{$-\omega_1$}} arc (0:-180:0.5) node [below left] {\footnotesize{$\omega_1$}} -- (0.5,1.3) node [above] {\footnotesize{$-\omega_1$}};
    \node at (0.5,0) [] {$\bullet$};
    \node at (1.5,0.1) [] {$\bullet$};
    \end{tikzpicture}  $=q\cdot$ 
    \begin{tikzpicture}[baseline=-5]
    \draw [blue] (0,0) circle [radius=1];
    \draw [blue] (-2,0) -- (-1,0) ;
    \draw [blue] (1,0) -- (2,0);
    \draw [very thick] (0,0) circle [radius=0.1];
    \draw [dashed, decoration={markings, mark=at position 0.6 with {\arrow{>}}}, postaction={decorate}] (0.5,-1.3) node [below] {\footnotesize{$\omega_1$}} to [in=-90] (1.2,0) to [out=90] (0.5,1.3) node [above] {\footnotesize{$-\omega_1$}};
    \draw [dashed, decoration={markings, mark=at position 0.4 with {\arrow{>}}}, postaction={decorate}] (1.5,1.3) node [above] {\footnotesize{$\omega_1$}} -- (1.5,-1.3) node [below] {\footnotesize{$-\omega_1$}};
    \end{tikzpicture} $=q\cdot$
    \begin{tikzpicture}[baseline=-5]
    \draw [blue] (0,0) circle [radius=1];
    \draw [blue] (-2,0) -- (-1,0) ;
    \draw [blue] (1,0) -- (2,0);
    \draw [very thick] (0,0) circle [radius=0.1];
    \draw [dashed, decoration={markings, mark=at position 0.4 with {\arrow{>}}}, postaction={decorate}] (1.5,1.3) node [above] {\footnotesize{$\alpha_1$}} -- (1.5,-1.3) node [below] {\footnotesize{$-\alpha_1$}};
    \end{tikzpicture}
    \caption{Product of the first terms in $\eta_E$ and $\eta_F$.}
    \label{fig: intersection point between E and F}
\end{figure}
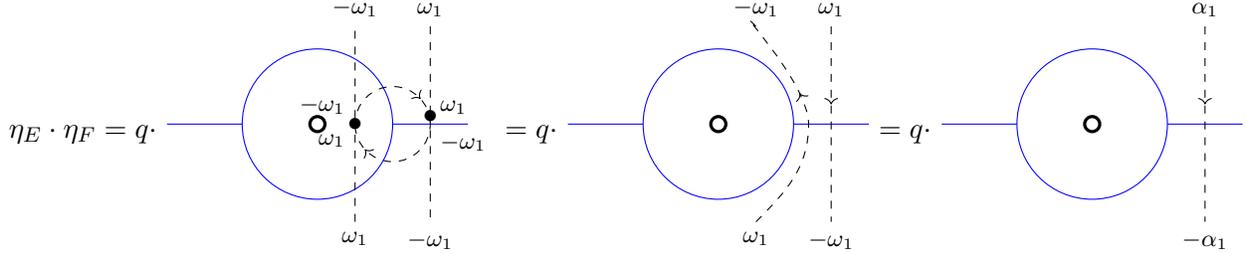

\section{The Non-Simply-Laced Dynkin Types}\label{sec5}

In general, a Dynkin diagram can be encoded by a symmetrizable Cartan matrix $C_{ij}=\inprod{\alpha_i^\vee}{\alpha_j}$. The lattice $Q$ spanned by $\{\alpha_i\}$ is called the \emph{root lattice} and the lattice $Q^\vee$ spanned by $\{\alpha_i^\vee\}$ is called the \emph{coroot lattice}. The \emph{weight lattice} $P$ is defined to be the dual lattice of the coroot lattice $Q^\vee$ and the coroot lattice and the \emph{coweight lattice} $P^\vee$ is defined to be the dual lattice of the root lattice $Q$. Since the Cartan matrix has integer entries, it follows that $Q\subset P$ and $Q^\vee\subset P^\vee$.

There is a diagonal matrix $D$ with relatively prime positive integer entries such that $D^{-1}C$ is symmetric. The entries $d_i$'s of $D$ are called \emph{multipliers}.

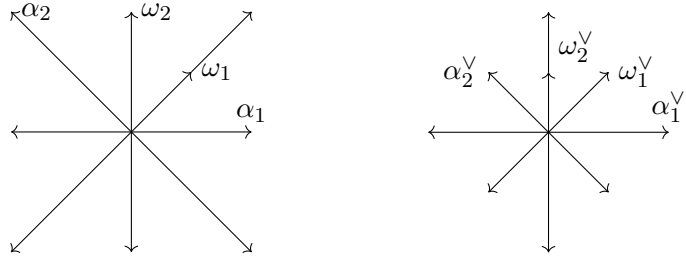
\begin{figure}[H]
    \centering
    \begin{tikzpicture}[scale=0.8]
        \draw[<->] (-2,0) -- (2,0) node [above] {$\alpha_1$};
        \draw [<->] (0,-2) -- (0,2) node [right] {$\omega_2$};
        \draw [<->] (-2,-2) -- (1,1) node [right] {$\omega_1$};
        \draw [->] (1,1) -- (2,2);
        \draw [<->] (-2,2) node [right] {$\alpha_2$} -- (2,-2);
    \end{tikzpicture}\hspace{2cm}\begin{tikzpicture}[scale=0.8]
        \draw[<->] (-2,0) -- (2,0) node [above] {$\alpha_1^\vee$};
        \draw [<->] (0,-2) -- (0,1) node [above right] {$\omega_2^\vee$};
        \draw [->] (0,1) -- (0,2);
        \draw [<->] (-1,-1) -- (1,1) node [right] {$\omega_1^\vee$};
        \draw [<->] (-1,1) node [left] {$\alpha_2^\vee$} -- (1,-1);
    \end{tikzpicture}
    \caption{Left: the root lattice and weight lattice of B$_2$. Right: the coroot lattice and the coweight lattice of B$_2$. Note that $d_1=2$ and $d_2=1$ for the multipliers in this case.}
    \label{fig:root and coroot of B2}
\end{figure}

To define the Weyl group, we first define a positive integer $m_{ij}$ for any pair of simple roots $\alpha_i\neq \alpha_j$ such that $\cos\left(\frac{\pi}{m_{ij}}\right)=\left(\frac{\sqrt{C_{ij}C_{ji}}}{2}\right)$. Then the Weyl group $W$ is a Coxeter group with one generator $s_i$ for each simple root $\alpha_i$, subject to the relations:
\begin{itemize}
    \item $s_i^2=e$,
    \item $(s_is_j)^{m_{ij}}=e$ for $i\neq j$.
\end{itemize}
The action of the Coxeter generators $s_i$ on the weight lattice $P$ are given by $s_i.\mu=\mu-\inprod{\alpha_i^\vee}{\mu}\alpha_i$, and dually, the action of the Coxeter generators $s_i$ on the coweight lattice $P^\vee$ are given by $s_i.\mu^\vee=\mu^\vee-\inprod{\alpha_i}{\mu^\vee}\alpha_i^\vee$. 

Corresponding to the $m_{ij}=4$ (Dynkin type B$_2$) and the $m_{ij}=6$ (Dynkin type G$_2$) cases, we add two more types of weave vertices. These weave vertices should be viewed as the foldings of the respective weave patterns of Dynkin type A$_3$ and D$_4$ below them in Figure \ref{fig:new weave vertices}. In particular, each weave edge incident to either weave vertex corresponds to a collection of external edges in the unfolded pattern. We call these collections of external edges the \emph{families of lifts}.
\begin{figure}[H]
    \centering
    \begin{tikzpicture}
    \foreach \i in {0,...,3}
    {
    \draw [red] (\i*90:1) -- (0,0);
    \draw [blue] (45+\i*90:1) -- (0,0);
    \node [red] at (\i*90:1.3) [] {$s_j$};
    \node [blue] at (45+\i*90:1.3) [] {$s_i$};
    }
\end{tikzpicture}\hspace{3cm}
\begin{tikzpicture}
    \foreach \i in {0,...,5}
    {
    \draw [blue] (\i*60:1) -- (0,0);
    \draw [red] (30+\i*60:1) -- (0,0);
    \node [blue] at (\i*60:1.3) [] {$s_i$};
    \node [red] at (30+\i*60:1.3) [] {$s_j$};
    }
\end{tikzpicture}\\ \vspace{0.5cm}
\begin{tikzpicture}[baseline=0,scale=0.8]
    \foreach \i in {0,1}
    {
    \draw [red] (90+180*\i:1) -- (180*\i:1) -- (180*\i-90:1)--(180*\i-90:2);
    \draw [red] (180*\i:1) -- (180*\i:2);
    \draw [blue] (1-\i*2,2) -- (0,1);
    \draw [blue] (1-\i*2,-2) -- (0,-1);
    \draw [teal] (2,1-\i*2)--(1,0);
    \draw [teal] (-2,1-\i*2) -- (-1,0);
    }
    \draw [blue] (0,1) -- (0,-1);
    \draw [teal] (1,0) -- (-1,0);
\end{tikzpicture}\hspace{3cm}
\begin{tikzpicture}[baseline=0, scale=1.2]
    \foreach \i in {0,1,2}
    {
    \draw [red] (0,0) -- (30+120*\i:0.5);
    \draw [blue] (30+120*\i:0.5)--(150+120*\i:0.5);
    \draw [red] (30+120*\i:0.5) -- (60+120*\i:0.86);
    \draw [red] (30+120*\i:0.5) -- (120*\i:0.86);
    \draw [red] (60+120*\i:0.86) -- (90+120*\i:1) -- (120+120*\i:0.86);
    \draw [red] (60+120*\i:0.86) -- (30+120*\i:1) -- (120*\i:0.86);
    \draw [brown] (120*\i:0.86) --(60+120*\i:0.86) -- (120+120*\i:0.86);
    \draw [brown] (60+120*\i:0.86) -- (60+120*\i:1.5);
    \draw [brown] (120+120*\i:0.86) -- (120+120*\i:1.5);
    \draw [teal] (0,0) -- (90+120*\i:1);
    \draw [blue] (30+120*\i:0.5) -- (30+120*\i:1);
    \draw [red] (90+120*\i:1) -- (90+120*\i:1.5);
    \draw [red] (30+120*\i:1) -- (30+120*\i:1.5);
    \draw [blue] (40+120*\i:1.5)--(30+120*\i:1)--(20+120*\i:1.5);
    \draw [teal] (80+120*\i:1.5) -- (90+120*\i:1) -- (100+120*\i:1.5);
    }
\end{tikzpicture}
    \caption{Left: octavalent weave vertex when $m_{ij}=4$ and its unfolding. Right: dodecavalent weave vertex when $m_{ij}=6$ and its unfolding.}
    \label{fig:new weave vertices}
\end{figure}
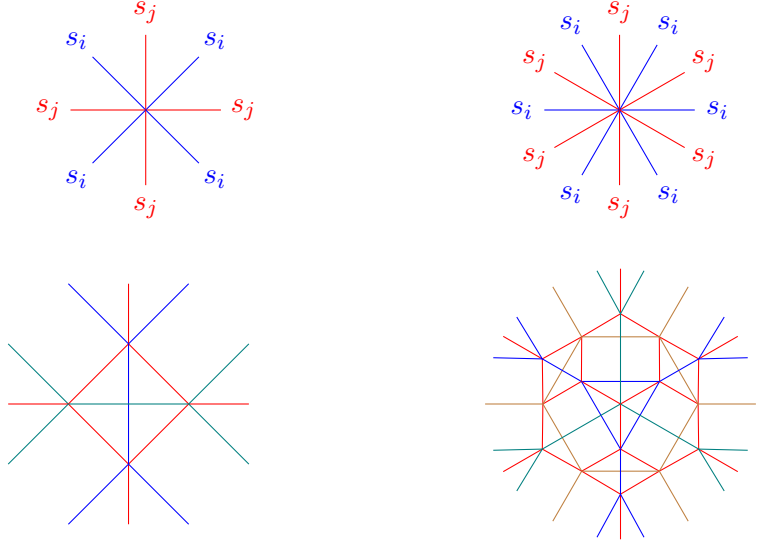

Following the idea of folding, Y-cycles that pass through these weave vertices with higher valence should be descendants of Y-cycles on the local unfolded weave pattern. To be more precise, let $\tilde{\gamma}$ be a Y-cycle on the unfolded weave pattern such that in the family of lifts $\{\tilde{a}_1,\dots, \tilde{a}_s\}$ of each weave edge $a$, $\tilde{\gamma}(\tilde{a}_1)=\cdots =\tilde{\gamma}(\tilde{a}_s)$; then the \emph{descendant} of $\tilde{\gamma}$ is a Y-cycle $\gamma$ with values $\gamma(a)=\tilde{\gamma}(\tilde{a}_i)$.

\begin{prop} \begin{enumerate}
    \item Let $v$ be a octavalent weave vertex. Denote the weave edges incident to $v$ by $a_1,a_2,\dots, a_8$ in the counterclockwise direction. For each $k=1,\dots, 8$, we define an $7$-tuple
\[
c_k=\left\{\begin{array}{ll}
    (1,1,1,0,-1,-1,-1) & \text{if $a_k$ has a color with multiplier $1$,} \\
    (1,2,1,0,-1,-2,-1) & \text{if $a_k$ has a color with multiplier $2$.}
\end{array}\right.
\]
We denote the $l$th term of the septuple $c_k$ as $c_k(l)$. Then $\gamma$ is a Y-cycle at $v$ if and only if $\gamma$ satisfies the following equality for each edge $a_k$ (indices taken modulo $8$):
\begin{equation}\label{eq: Y-cycle}
\sum_{l=1}^7c_k(l)\gamma(a_{k+l})=0.
\end{equation}
\item Let $v$ be a dodecavalent weave vertex. Denote the weave edges incident to $v$ by $a_1,a_2,\dots, a_{12}$ in the counterclockwise direction. For each $k=1,...,12$, we define an $11$-tuple
\[
c_k=\left\{\begin{array}{ll}
    (1,1,2,1,1,0,-1,-1,-2,-1,-1) & \text{if $a_k$ has a color with multiplier $1$}, \\
    (1,3,2,3,1,0,-1,-3,-2,-3,-1) & \text{if $a_k$ has a color with multiplier $3$}.
\end{array}\right.
\]
We denote the $l$th term in the undecuple $c_k$ as $c_k(l)$. Then $\gamma$ is a Y-cycle at $v$ if and only if $\gamma$ satisfies the following equality for each edge $a_k$ (indices taken modulo $12$):
\begin{equation}
    \sum_{l=1}^{11} c_k(l)\gamma(a_{k+l})=0.
\end{equation}
\end{enumerate}

\end{prop}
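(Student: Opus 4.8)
The plan is to prove both statements directly from the definition of a Y-cycle at a higher-valence vertex as a \emph{descendant} of a Y-cycle on the unfolded simply-laced pattern, thereby reducing each claim to a linear computation with the (linear) hexavalent and tetravalent conditions of Definition \ref{defn: Y-cycle}. First I would record the combinatorics of the two unfoldings in Figure \ref{fig:new weave vertices}. The octavalent vertex unfolds to an $\mathrm{A}_3$-pattern whose internal vertices are exactly four hexavalent vertices (arranged around the center) and one central tetravalent vertex; crucially there is \textbf{no} trivalent vertex, so every local condition is linear. The eight octavalent edges $a_1,\dots,a_8$ lift to families of external edges of this pattern: the edges of the folded color (the short, multiplier-$1$ node coming from the $\mathbb{Z}/2$-orbit $\{s_1,s_3\}$) acquire a family of size two, one external edge of each orbit color, while the edges of the fixed color (the long, multiplier-$2$ node $s_2$) have a family of size one. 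The dodecavalent vertex unfolds to a $\mathrm{D}_4$-pattern, again assembled only from hexavalent and tetravalent vertices, with the triality-orbit node giving families of size three. In each case the descendant condition and the condition \eqref{eq: Y-cycle} (resp. its length-$11$ analogue) both cut out linear subspaces of the space of edge-assignments, so the proposition is the assertion that these two subspaces coincide.

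For the forward direction I would show every descendant satisfies the stated equalities. Given a Y-cycle $\tilde\gamma$ on the unfolded pattern that is constant on each family of lifts, I rewrite each expression $\sum_l c_k(l)\gamma(a_{k+l})$ as an explicit integer linear combination of the hexavalent relations at the corner vertices and the tetravalent relation at the center; the antisymmetry $c_k(l)=-c_k(8-l)$ (resp. $c_k(l)=-c_k(12-l)$) reflects that each internal relation is of ``difference equals difference'' type, precisely as the hexavalent string $(1,1,1,0,-1,-1,-1)$ degenerates to the hexavalent coefficient pattern $(1,1,0,-1,-1)$. For the backward direction I start from a $\gamma$ satisfying \eqref{eq: Y-cycle}, assign the folded values to the external edges, and solve the internal hexavalent/tetravalent relations for the internal edge multiplicities; the equalities \eqref{eq: Y-cycle} are exactly the compatibility conditions that make this system solvable, so that $\gamma$ is genuinely a descendant. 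The larger coefficients $2$ and $3,2,3$ occurring for the higher-multiplier colors are not an artifact of family size but emerge from the specific integer combination just described, and they match the structure of the $\mathrm{B}_2$ and $\mathrm{G}_2$ root data (the symmetric strings $1,2,1$ and $1,3,2,3,1$ peaking at the marks of the long node).

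The main obstacle is the bookkeeping for the dodecavalent ($\mathrm{G}_2$) case: its $\mathrm{D}_4$-unfolding is substantially larger than the $\mathrm{A}_3$-unfolding, with many more internal edges and hexavalent vertices, so both the choice of linear combination (forward) and the elimination of internal variables (backward) require careful orbit-by-orbit tracking of the triality folding. I expect the cleanest way to control this is to group the internal edges into their folding orbits and to process the corner hexavalent relations in the cyclic order obtained by scanning around $v$, so that the telescoping producing $(1,3,2,3,1)$ becomes transparent. The step I would verify most carefully is \emph{sufficiency} — that \eqref{eq: Y-cycle} is not merely necessary but also cuts out no more than the descendant subspace — which I would confirm by checking that the rank of the system \eqref{eq: Y-cycle} equals the codimension of the space of descendants computed directly from the unfolded pattern (both $\mathrm{B}_2$ and $\mathrm{G}_2$ reducing, exactly as the hexavalent system reduces from its full cyclic form to two independent relations).
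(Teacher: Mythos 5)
Your overall strategy—reduce everything to the unfolded simply-laced pattern and verify the relations against the hexavalent/tetravalent conditions—is the same as the paper's, and your forward direction (expressing $\sum_l c_k(l)\gamma(a_{k+l})$ as an integer combination of the internal relations) is essentially the paper's computation with the internal multiplicities $\xi_1,\dots,\xi_6$. The genuine gap is in your sufficiency argument. You assert that, since there are no trivalent vertices in the unfolded pattern, "every local condition is linear," so that descendants and solutions of \eqref{eq: Y-cycle} "both cut out linear subspaces," to be compared by a rank/codimension count. This forgets that a Y-cycle is by Definition \ref{defn: Y-cycle} a map $E(\ww)\to\mathbb{Z}_{\geq 0}$: the hexavalent and tetravalent conditions are indeed linear, but all multiplicities—including those on the \emph{internal} edges of the unfolded pattern—must be non-negative. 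Hence the set of descendants is the coordinate projection of a polyhedral cone (a linear subspace intersected with the positive orthant in the full edge space), not a linear subspace, and equality of linear spans does not imply that every non-negative solution of \eqref{eq: Y-cycle} admits a non-negative lift. Concretely, in the octavalent case the internal values are determined by the external ones up to a single free parameter, and one must prove that the interval of values of that parameter cut out by the six inequalities $\xi_i\geq 0$ is non-empty. This is exactly where the paper does real work: it assumes WLOG that $\gamma(a_5)$ is minimal among $\{\gamma(a_1),\gamma(a_3),\gamma(a_5),\gamma(a_7)\}$, sets $\xi_1:=\min\{\gamma(a_1)+\gamma(a_3),\gamma(a_2)+\gamma(a_3),\gamma(a_1)+\gamma(a_7),\gamma(a_7)+\gamma(a_8)\}$, and checks that the remaining $\xi_i$ are then non-negative. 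Your plan has no counterpart to this step, and rank counting cannot supply it; the same issue recurs, with more variables, in the dodecavalent case.

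A second, more mechanical error: you attach the multipliers to the wrong nodes. You take the folded orbit $\{s_1,s_3\}$ to be the multiplier-$1$ color and the fixed node $s_2$ to be the multiplier-$2$ color, but in the paper's conventions (cf.\ the $\mathrm{B}_2$ figure with $d_1=2$, $d_2=1$, and the equations in the paper's proof) it is the orbit color—whose edges carry families of lifts of size $2$ (resp.\ $3$)—that has multiplier $2$ (resp.\ $3$). That is what makes the string $(1,2,1,0,-1,-2,-1)$ attach to those edges: the unfolding yields $\gamma(a_2)+2\gamma(a_3)+\gamma(a_4)=\gamma(a_6)+2\gamma(a_7)+\gamma(a_8)$ when $a_1$ is in the orbit color, and $\gamma(a_1)+\gamma(a_2)+\gamma(a_3)=\gamma(a_5)+\gamma(a_6)+\gamma(a_7)$ when $a_8$ is in the fixed color. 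With your identification, the forward verification would target relations such as $\gamma(a_1)+2\gamma(a_2)+\gamma(a_3)=\gamma(a_5)+2\gamma(a_6)+\gamma(a_7)$, which do not follow from the unfolded Y-cycle conditions. This is fixable, but as written the proposal would set out to verify the wrong statements.
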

\begin{proof} We will prove part (1) here and leave part (2) as an exercise for the readers. 

Suppose $m=4$ and $\gamma$ is a descendant from a Y-cycle $\tilde{\gamma}$ on the local unfolded weave pattern. Let us assume that the values of $\tilde{\gamma}$ along the remaining internal weave edges are as in Figure \ref{fig: unfolded B2 weave}. Then it follows that
\[
\gamma(a_1)+\gamma(a_2)+\gamma(a_3)=\xi_2+\xi_6+\gamma(a_3)=\gamma(a_5)+\xi_3+\xi_6=\gamma(a_5)+\gamma(a_6)+\gamma(a_7),
\]
and
\[
\gamma(a_2)+2\gamma(a_3)+\gamma(a_4)=\xi_1+\xi_6+\xi_3+\xi_5=\gamma(a_6)+2\gamma(a_7)+\gamma(a_8).
\]
Equation \eqref{eq: Y-cycle} for the rest of the weave edges can be obtained by similar computations.
\begin{figure}[H]
    \centering
    \begin{tikzpicture}[baseline=0,scale=1]
    \foreach \i in {0,1}
    {
    \draw [red] (90+180*\i:1) -- (180*\i:1) -- (180*\i-90:1)--(180*\i-90:2);
    \draw [red] (180*\i:1) -- (180*\i:2);
    \draw [blue] (1-\i*2,2) -- (0,1);
    \draw [blue] (1-\i*2,-2) -- (0,-1);
    \draw [teal] (2,1-\i*2)--(1,0);
    \draw [teal] (-2,1-\i*2) -- (-1,0);
    }
    \draw [blue] (0,1) -- (0,-1);
    \draw [teal] (1,0) -- (-1,0);
    \node at (-1,2) [above left] {$\gamma(a_1)$};
    \node at (-2,1) [above left] {$\gamma(a_1)$};
    \node at (-2,0) [left] {$\gamma(a_2)$};
    \node at (-2,-1) [below left] {$\gamma(a_3)$};
    \node at (-1,-2) [below left] {$\gamma(a_3)$};
    \node at (0,-2) [below] {$\gamma(a_4)$};
    \node at (1,-2) [below right] {$\gamma(a_5)$};
    \node at (2,-1) [below right] {$\gamma(a_5)$};
    \node at (2,0) [right] {$\gamma(a_6)$};
    \node at (2,1) [above right] {$\gamma(a_7)$};
    \node at (1,2) [above right] {$\gamma(a_7)$};
    \node at (0,2) [above] {$\gamma(a_8)$};
    \foreach \i in {1,...,4}
    {
    \node at (45+90*\i:0.9) [] {$\xi_\i$};
    }
    \node at (0.2,0.4) [] {$\xi_5$};
    \node at (0.4,-0.2) [] {$\xi_6$};
    \end{tikzpicture}
    \caption{Multiplicities in a lift of a Y-cycle.}
    \label{fig: unfolded B2 weave}
\end{figure}
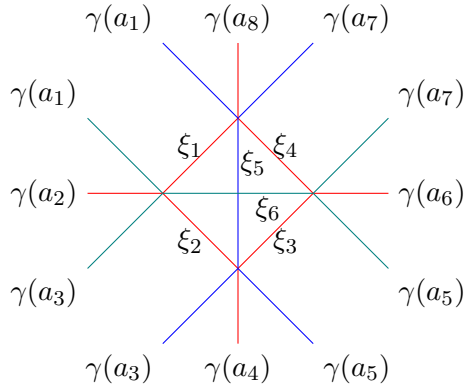

Conversely, suppose we have a candidate Y-cycle $\gamma$ that satisfies Equation \eqref{eq: Y-cycle} for all incident edges $a_k$'s; we need to find a lift $\tilde{\gamma}$ that is a Y-cycle on the local weave pattern. Without loss of generality, let us assume that $\gamma(a_5)$ is the smallest among $\{\gamma(a_1),\gamma(a_3),\gamma(a_5),\gamma(a_7)\}$. Then we set
\[
\xi_1:=\min\{\gamma(a_1)+\gamma(a_3), \gamma(a_2)+\gamma(a_3), \gamma(a_1)+\gamma(a_7), \gamma(a_7)+\gamma(a_8)\}.
\]
It then follows that 
\[
\xi_2=\gamma(a_1)+\gamma(a_3)-\xi_1, \quad \xi_3=\gamma(a_1)+2\gamma(a_7)-\gamma(a_5)-\xi_1, \quad \xi_4=\gamma(a_1)+\gamma(a_7)-\xi_1,
\]
\[
\xi_5=\gamma(a_7)+\gamma(a_8)-\xi_1, \quad \xi_6=\gamma(a_2)+\gamma(a_3)-\xi_1
\]
gives a Y-cycle $\tilde{\gamma}$ on the local unfolded weave pattern.
\end{proof}

\begin{rmk} For a Y-cycle $\gamma$ at a higher valence weave vertex, its lift $\tilde{\gamma}$ to a local unfolded weave pattern is not necessarily unique. However, since there are no trivalent weave vertices in the local unfolded pattern, any two weighted cycle lifts are homotopic to each other.
\end{rmk}

Similar to the roots in a non-simply-laced root system, each Y-cycle on a non-simply-laced weave also has a multiplier. This multiplier plays an important role in defining a \textbf{skew-symmetrizable pairing} between Y-cycles.

\begin{defn} Let $d$ be the non-trivial multiplier of the Dynkin type ($d=2$ for Dynkin type B, C, and F$_4$, and $d=3$ for Dynkin type G$_2$). We set the \emph{multiplier} $d_\gamma$ of a Y-cycle $\gamma$ to be $d$ if for all weave edges $e$ of color $s_i$ with $d_i=1$, $\gamma(e)$ is a multiple of $d$. Otherwise we set $d_\gamma=1$.
\end{defn}

\begin{defn} Let $\ww$ be a weave of general Dynkin type (not necessarily simply-laced). Let $\tilde{\ww}$ be an unfolded weave of $\ww$ that is of simply-laced Dynkin type. Let $\gamma_1$ and $\gamma_2$ be two Y-cycles on $\ww$ and let $\tilde{\gamma}_i$ be a lift of $\gamma_i$ to $\tilde{\ww}$ for $i=1,2$. Then we define the \emph{skew-symmetrizable pairing} between $\gamma_1$ and $\gamma_2$ to be $\inprod{\gamma_1}{\gamma_2}:=\{\tilde{\gamma_1},\tilde{\gamma_2}\}d_{\gamma_1}^{-1}$.
\end{defn}

\begin{rmk} One can find that the local skew-symmetrizable pairing between Y-cycles at an octavalent weave vertex can be given by the following closed form formula:
\[
\inprod{\gamma}{\gamma'}=\frac{1}{d_{\gamma}}\sum_{i=0}^3\det\begin{pmatrix} 1 & 1 & 1\\ \gamma(a_{2i+1}) & \gamma(a_{2i}) & \gamma(a_{2i-1}) \\ \gamma'(a_{2i+1}) & \gamma'(a_{2i}) & \gamma'(a_{2i-1})\end{pmatrix},
\]
where $a_i$'s are the labeling of incident weave edges according to Figure \ref{fig: unfolded B2 weave}, and all indices are taken modulo $8$. We expect that a closed form formula also exists for the local skew-symmetrizable pairing between Y-cycles at a dodecavalent weave vertex, but we are not able to find it yet.
\end{rmk}

Weighted cycles on weaves of general Dynkin type are defined in the same way as the simply-laced case: the only things we need to add are the homotopy moves across octavalent and dodecavalent weave vertices, similar to the homotopy move (6) near hexavalent weave vertices. The weighted cycle algebra and merodromies are also defined in a similar fashion. 

We can also lift Y-cycles to weighted cycles in the non-simply-laced case. However, unlike the simply-laced case, each Y-cycle $\gamma$ in the non-simply-laced case admits two lifts to weighted cycles, one labeled by weights, which we denote by $\eta$, and the other labeled by coweights, which we denote by $\eta^\vee$. The construction recipes for both are analogous to that of weighted cycle representatives in Subsection \ref{subsec: 3.4}, and the patterns we use at a octavalent and dodecavalent weave vertex are similar to that at the hexavalent weave vertex.

\begin{figure}[H]
    \centering
    \begin{tikzpicture}[baseline=0]
    \foreach \i in {0,...,3}
    {
    \draw [red] (\i*90:2) -- (0,0);
    \draw [blue] (45+\i*90:2) -- (0,0);
    }
    \foreach \i in {1,...,8}
    {
    \draw [dashed, decoration={markings, mark=at position 0.6 with {\arrow{<}}}, postaction={decorate}] (90+\i*45+11:2) -- (90+\i*45+22.5:1);
    \draw [dashed, decoration={markings, mark=at position 0.6 with {\arrow{>}}}, postaction={decorate}] (90+\i*45+34:2) -- (90+\i*45+22.5:1);
    \draw [dashed, decoration={markings, mark=at position 0.5 with {\arrow{>}}}, postaction={decorate}] (90+\i*45+22.5:1) -- (0,0);
    \node at (90+\i*45+11:2.4) [] {$a_\i\omega_\i$};
    \node at (90+\i*45-11:2.4) [] {$a_\i\omega_\i$};
    \node at (90+\i*45+22.5:1) [] {$\bullet$};
    }
    \node at (0,0) [] {$\bullet$};
\end{tikzpicture}\hspace{2cm}
\begin{tikzpicture}[baseline=0]
    \foreach \i in {0,...,5}
    {
    \draw [blue] (\i*60:3) -- (0,0);
    \draw [red] (30+\i*60:3) -- (0,0);
    }
    \foreach \i in {1,...,12}
    {
    \draw [dashed, decoration={markings, mark=at position 0.6 with {\arrow{<}}}, postaction={decorate}] (90+\i*30+10:3) -- (90+\i*30+15:1.8);
    \draw [dashed, decoration={markings, mark=at position 0.6 with {\arrow{>}}}, postaction={decorate}] (90+\i*30+20:3) -- (90+\i*30+15:1.8);
    \draw [dashed, decoration={markings, mark=at position 0.5 with {\arrow{>}}}, postaction={decorate}] (90+\i*30+15:1.8) -- (0,0);
    \node at (90+\i*30+8:3.4) [] {\footnotesize{$a_{\i}\omega_{\i}$}};
    \node at (90+\i*30-8:3.4) [] {\footnotesize{$a_{\i}\omega_{\i}$}};
    \node at (90+\i*30+15:1.8) [] {$\bullet$};
    }
    \node at (0,0) [] {$\bullet$};
\end{tikzpicture}
    \caption{Local models for weighted cycle representatives near a octavalent (left) and a dodecavalent (right) vertex.}
\end{figure}

\begin{rmk} One can view \emph{coweighted cycles} as weighted cycles for the Langlands dual group $G^\vee$.
\end{rmk}

We can define a pairing between coweighted cycles with weighted cycles, just with the inner product $(\cdot, \cdot)$ replaced by the pairing $\inprod{\cdot}{\cdot}$ between weights and coweights.

\begin{thm} For any Y-cycles $\gamma_1$ and $\gamma_2$ on a weave of general Dynkin type, we have $\inprod{\gamma_1}{\gamma_2}=\inprod{\eta_1}{\eta^\vee_2}$, where $\eta_1$ is the coweighted cycle representative of $\gamma_1$ and $\eta^\vee_2$ is the weighted cycle representative of $\gamma_2$.
\end{thm}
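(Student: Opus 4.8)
The plan is to reduce the statement to the already-established simply-laced Theorem \ref{thm: recovering pairing between Y-cycles} by passing to an unfolding, and then to account for the multiplier bookkeeping locally. First I would fix a simply-laced unfolded weave $\tilde{\ww}$ of $\ww$ together with lifts $\tilde{\gamma}_1,\tilde{\gamma}_2$ of $\gamma_1,\gamma_2$. By the very definition of the skew-symmetrizable pairing, $\inprod{\gamma_1}{\gamma_2}=\{\tilde{\gamma}_1,\tilde{\gamma}_2\}\,d_{\gamma_1}^{-1}$, and since $\tilde{\ww}$ is simply-laced, Theorem \ref{thm: recovering pairing between Y-cycles} gives $\{\tilde{\gamma}_1,\tilde{\gamma}_2\}=\{\tilde{\eta}_1,\tilde{\eta}_2\}$, where $\tilde{\eta}_i$ is the simply-laced weighted cycle representative of $\tilde{\gamma}_i$. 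Because $\tilde{\ww}$ has no octavalent or dodecavalent vertices and because $\tilde{P}=\tilde{P}^\vee$ in the simply-laced case, the weighted and coweighted representatives of $\tilde{\gamma}_i$ coincide; thus the coweighted cycle $\eta_1$ and the weighted cycle $\eta_2^\vee$ on $\ww$ both unfold to these $\tilde{\eta}_i$. The remaining task is purely local: to match the folded weight--coweight pairing $\inprod{\eta_1}{\eta_2^\vee}$ with $d_{\gamma_1}^{-1}\{\tilde{\eta}_1,\tilde{\eta}_2\}$, vertex by vertex and boundary interval by boundary interval, using that both pairings are sums of local contributions.

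The key lattice-theoretic input I would establish next is the compatibility of the canonical pairing $\inprod{\cdot}{\cdot}\colon P^\vee\times P\to\mathbb{Z}$ with the inner product $(\cdot,\cdot)$ on $\tilde{P}$ summed over families of lifts. Concretely, for a weave edge $a$ of color $s_i$ with multiplier $d_i$ and family of lifts $\{\tilde{a}_1,\dots,\tilde{a}_s\}$, the coweight label $\gamma_1(a)\omega_i^\vee$ carried by $\eta_1$ lifts to equal multiplicities $\tilde{\gamma}_1(\tilde{a}_j)$ on each $\tilde{a}_j$, while the weight label $\gamma_2(a)\omega_i$ carried by $\eta_2^\vee$ lifts likewise. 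Summing the inner-product contributions over the lifts then reproduces the coweight--weight contribution, with all of the normalization discrepancy between $\inprod{\cdot}{\cdot}$ and $(\cdot,\cdot)$ collected into the single global factor $d_{\gamma_1}^{-1}$; the factor attaches to $\gamma_1$ precisely because it is $\gamma_1$, and not $\gamma_2$, whose representative is taken in coweights. I would also record, via the non-uniqueness remark, that different lifts of $\gamma_i$ yield homotopic weighted cycles, so that $\inprod{\eta_1}{\eta_2^\vee}$ is well-defined independently of the chosen unfolding.

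With these pieces in place, the verification splits by vertex type. At tetravalent vertices and along boundary intervals, no higher-valence folding occurs, and the identity follows from the same Weyl-invariance of the inner product under $s_{i_1}\cdots s_{i_k}$ already exploited in the proof of Theorem \ref{thm: recovering pairing between Y-cycles}. At trivalent and hexavalent vertices the matching is inherited directly from the simply-laced case. The substantive cases are the octavalent and dodecavalent vertices, where I would compute $\inprod{\eta_1}{\eta_2^\vee}$ locally and compare it against the closed-form skew-symmetrizable expression recorded in the remark following the definition of the pairing, thereby confirming simultaneously both the numerical value and the placement of the $d_{\gamma_1}^{-1}$ factor.

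The main obstacle I anticipate is exactly this asymmetry: the underlying inner product on $\tilde{\ww}$ is symmetric, yet the target $d_{\gamma_1}^{-1}\{\tilde{\eta}_1,\tilde{\eta}_2\}$ is not symmetric in the two Y-cycles. The whole content of the theorem is that replacing one representative by its coweighted version, and the inner product by the weight--coweight pairing, breaks the symmetry in precisely the way the multiplier prescribes. Controlling this at the octavalent and dodecavalent vertices — where a single folded label corresponds to several lifts with possibly different multipliers $d_i$, and where the generic-position perturbation of the central interior endpoint must be tracked as in the hexavalent computation — is where I expect the genuine work to lie; the tetravalent, trivalent, hexavalent, and boundary-interval contributions should reduce cleanly to the simply-laced arguments.
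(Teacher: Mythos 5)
Your overall route is genuinely different from the paper's, so it is worth saying first what the paper actually does: it does not globalize at all. Its entire proof consists of enclosing each octavalent or dodecavalent vertex in a small circle $C$ with a generic base point $p$, invoking Lemma \ref{lem: boundary intersection} to convert the local contribution into a boundary pairing along $C\setminus\{p\}$, and then running the same computation as in the hexavalent case of Theorem \ref{thm: recovering pairing between Y-cycles}; all other local contributions are already covered by the simply-laced argument. Your first two steps (unwinding the definition $\inprod{\gamma_1}{\gamma_2}=d_{\gamma_1}^{-1}\{\tilde{\gamma}_1,\tilde{\gamma}_2\}$ and applying Theorem \ref{thm: recovering pairing between Y-cycles} on the unfolded weave $\tilde{\ww}$) are sound. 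The gap is the ``key lattice-theoretic input'' that is supposed to finish the argument. It is not true that $\eta_1$ and $\eta_2^\vee$ both unfold to the simply-laced representatives $\tilde{\eta}_i$. In the paper's conventions the multiplier of a color equals the number of its lifts (for B$_2$, $d_1=2$ and $s_1$ unfolds to two commuting colors), and matching the folded Cartan matrix $C_{ij}=\inprod{\alpha_i^\vee}{\alpha_j}$ forces coroots and coweights to embed into the unfolded lattice as orbit \emph{sums}, $\omega_i^\vee\mapsto\sum_j\tilde{\omega}_{i_j}$, but roots and weights to embed as orbit \emph{averages}, $\omega_i\mapsto d_i^{-1}\sum_j\tilde{\omega}_{i_j}$. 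Hence only the coweight-labeled representative unfolds, up to homotopy, to the simply-laced representative; the weight-labeled one unfolds to a cycle whose labels near color-$s_i$ edges are $1/d_i$ times those of the simply-laced representative. That rescaling is edge-dependent, not a single global factor: if $d_{\gamma}=1$ but $\gamma$ has nonzero multiplicity on multiplier-$d$ edges, part of its support is rescaled by $1/d$ and part not at all, and proving that the total pairing nevertheless equals $d_{\gamma}^{-1}\{\tilde{\eta}_1,\tilde{\eta}_2\}$ is precisely the content of the theorem, concentrated at the new vertex types --- your lemma does not reduce it. (Relatedly, your mechanism attaches the factor to the side ``taken in coweights,'' whereas the paper's own B$_2$ example --- where $\inprod{\eta_1}{\eta_2^\vee}=1=\tfrac{1}{2}\{\tilde{\gamma}_1,\tilde{\gamma}_2\}$ and the first cycle is the one labeled by weights $\omega_1$ --- shows the normalization travels with the weight-labeled representative; the theorem's wording and its figure disagree on this point, and your proposal inherits the reading that is inconsistent with the worked example.)

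A second, smaller but concrete problem: at dodecavalent vertices you plan to compare your local computation ``against the closed-form skew-symmetrizable expression recorded in the remark,'' but the paper states explicitly that no such closed form is known at dodecavalent vertices; only the octavalent formula is given. The only available comparison target there is the unfolded local pairing itself, computed by truncating both representatives along a small circle and evaluating the boundary pairing along $C\setminus\{p\}$ --- which is exactly the paper's proof. So once the incorrect lattice lemma is removed, what remains of your plan is the local boundary-reduction computation that you deferred; carrying it out at the octavalent and dodecavalent vertices would simultaneously complete your argument and render the unfolding scaffolding unnecessary.
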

\begin{proof} The proof is done by first covering up the weave vertex by a simple closed curve $C$ with a generic base point $p$. Then we reduce the problem to a computation along the interval $C\setminus\{p\}$, similar to that in the case of hexavalent vertices in the proof of Theorem \ref{thm: recovering pairing between Y-cycles}.
\end{proof}

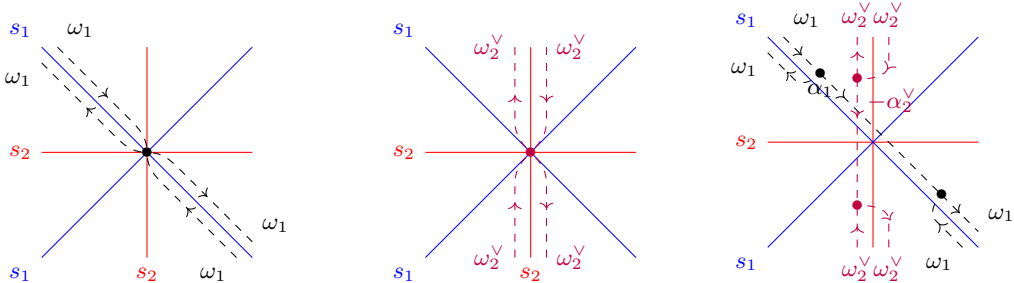
\begin{figure}[H]
    \centering
    \begin{tikzpicture}[scale=0.7]
        \draw [red] (0,-2) node [below] {\footnotesize{$s_2$}} -- (0,2);
        \draw [red] (2,0) -- (-2,0)node [left] {\footnotesize{$s_2$}};
        \draw [blue] (-2,2) node [above left] {\footnotesize{$s_1$}}-- (2,-2);
        \draw [blue] (-2,-2) node [below left] {\footnotesize{$s_1$}} -- (2,2);
        \draw [dashed, decoration={markings, mark=at position 0.5 with {\arrow{>}}}, postaction={decorate}] (-1.7,2) node [above right] {\footnotesize{$\omega_1$}} -- (-0.3,0.6) to [out=-45,in=90] (0,0);
        \draw [dashed, decoration={markings, mark=at position 0.5 with {\arrow{<}}}, postaction={decorate}] (-2,1.7) node [below left] {\footnotesize{$\omega_1$}} -- (-0.6,0.3) to [out=-45,in=180] (0,0);
        \draw [dashed, decoration={markings, mark=at position 0.5 with {\arrow{>}}}, postaction={decorate}] (1.7,-2) node [below left] {\footnotesize{$\omega_1$}} -- (0.3,-0.6) to [out=135,in=-90] (0,0);
        \draw [dashed, decoration={markings, mark=at position 0.5 with {\arrow{<}}}, postaction={decorate}] (2,-1.7) node [above right] {\footnotesize{$\omega_1$}} -- (0.6,-0.3) to [out=135,in=0] (0,0);
        \node at (0,0) [] {\footnotesize{$\bullet$}};
    \end{tikzpicture}\hspace{1cm}
    \begin{tikzpicture}[scale=0.7]
        \draw [red] (0,-2) node [below] {\footnotesize{$s_2$}} -- (0,2);
        \draw [red] (2,0) -- (-2,0)node [left] {\footnotesize{$s_2$}};
        \draw [blue] (-2,2) node [above left] {\footnotesize{$s_1$}}-- (2,-2);
        \draw [blue] (-2,-2) node [below left] {\footnotesize{$s_1$}} -- (2,2);
        \draw [purple, dashed, decoration={markings, mark=at position 0.5 with {\arrow{>}}}, postaction={decorate}] (0.3,2) node [right] {\footnotesize{$\omega_2^\vee$}} -- (0.3,0.6) to [out=-90,in=45] (0,0);
        \draw [purple, dashed, decoration={markings, mark=at position 0.5 with {\arrow{<}}}, postaction={decorate}] (-0.3,2) node [left] {\footnotesize{$\omega_2^\vee$}} -- (-0.3,0.6) to [out=-90,in=135] (0,0);
        \draw [purple, dashed, decoration={markings, mark=at position 0.5 with {\arrow{>}}}, postaction={decorate}] (-0.3,-2) node [left] {\footnotesize{$\omega_2^\vee$}} -- (-0.3,-0.6) to [out=90,in=-135] (0,0);
        \draw [purple, dashed, decoration={markings, mark=at position 0.5 with {\arrow{<}}}, postaction={decorate}] (0.3,-2) node [right] {\footnotesize{$\omega_2^
        \vee$}} -- (0.3,-0.6) to [out=90,in=-45] (0,0);
        \node [purple] at (0,0) [] {\footnotesize{$\bullet$}};
    \end{tikzpicture}\hspace{1cm}
    \begin{tikzpicture}[scale=0.7]
        \draw [red] (0,-2) -- (0,2);
        \draw [red] (2,0) -- (-2,0)node [left] {\footnotesize{$s_2$}};
        \draw [blue] (-2,2) node [above left] {\footnotesize{$s_1$}}-- (2,-2);
        \draw [blue] (-2,-2) node [below left] {\footnotesize{$s_1$}} -- (2,2);
        \draw [dashed, decoration={markings, mark=at position 0.5 with {\arrow{>}}}, postaction={decorate}] (-1.7,2) node [above right] {\footnotesize{$\omega_1$}} -- (-1,1.3);
        \draw [dashed, decoration={markings, mark=at position 0.2 with {\arrow{>}}}, postaction={decorate}] (-1,1.3) node [below] {\footnotesize{$\alpha_1$}} -- (1.3,-1);
        \draw [dashed, decoration={markings, mark=at position 0.5 with {\arrow{>}}}, postaction={decorate}] (1.3,-1) -- (2,-1.7) node [above right] {\footnotesize{$\omega_1$}};
        \draw [dashed, decoration={markings, mark=at position 0.5 with {\arrow{<}}}, postaction={decorate}] (-2,1.7) node [below left] {\footnotesize{$\omega_1$}} -- (-1.5,1.2) to [out=-45,in=-135] (-1,1.3);
        \draw [dashed, decoration={markings, mark=at position 0.5 with {\arrow{>}}}, postaction={decorate}] (1.7,-2) node [below left] {\footnotesize{$\omega_1$}} -- (1.2,-1.5) to [out=135,in=-135] (1.3,-1);
        \node at (-1,1.3) [] {\footnotesize{$\bullet$}};
        \node at (1.3,-1) [] {\footnotesize{$\bullet$}};
        \draw [purple, dashed, decoration={markings, mark=at position 0.5 with {\arrow{>}}}, postaction={decorate}] (0.3,2) node [above] {\footnotesize{$\omega_2^\vee$}} -- (0.3,1.5) to [out=-90,in=0] (-0.3,1.2);
        \draw [purple, dashed, decoration={markings, mark=at position 0.5 with {\arrow{<}}}, postaction={decorate}] (-0.3,2) node [above] {\footnotesize{$\omega_2^\vee$}} -- (-0.3,1.2);
        \draw  [purple, dashed, decoration={markings, mark=at position 0.3 with {\arrow{>}}}, postaction={decorate}] (-0.3,1.2) node [below right] {\footnotesize{$-\alpha_2^\vee$}}-- (-0.3,-1.2);
        \draw [purple, dashed, decoration={markings, mark=at position 0.5 with {\arrow{>}}}, postaction={decorate}] (-0.3,-2) node [below] {\footnotesize{$\omega_2^\vee$}} -- (-0.3,-1.2);
        \draw [purple, dashed, decoration={markings, mark=at position 0.5 with {\arrow{<}}}, postaction={decorate}] (0.3,-2) node [below] {\footnotesize{$\omega_2^
        \vee$}} -- (0.3,-1.5) to [out=90,in=0] (-0.3,-1.2);
        \node [purple] at (-0.3,1.2) [] {\footnotesize{$\bullet$}};
        \node [purple] at (-0.3,-1.2) [] {\footnotesize{$\bullet$}};
    \end{tikzpicture}
    \caption{Local pictures at an octavalent vertex of a B$_2$ weave. Left: weight cycle representative $\eta_1$ of a Y-cycle $\gamma_1$. Middle: coweight cycle representative $\eta^\vee_2$ of a Y-cycle $\gamma_2$. Right: homotopic images of the two cycles, from which we can see that the skew-symmetrizable intersection pairing gives $\inprod{\eta_1}{\eta^\vee_2}=\inprod{\alpha_1}{-\alpha_2^\vee}=1=\inprod{\gamma_1}{\gamma_2}=\frac{1}{2}\{\tilde{\gamma}_1,\tilde{\gamma}_2\}$ (c.f. Figures \ref{fig:root and coroot of B2} and \ref{fig: unfolded B2 weave}).}
\end{figure}

\bibliographystyle{biblio}

\bibliography{biblio}

\end{document}